\providecommand{\tabularnewline}{\\}
\providecommand{\algorithmname}{Algorithm}
\theoremstyle{plain}
\newtheorem{thm}{\protect\theoremname}[section]
\theoremstyle{plain}
\newtheorem{assumption}[thm]{\protect\assumptionname}
\theoremstyle{definition}
\newtheorem{defn}[thm]{\protect\definitionname}
\theoremstyle{plain}
\newtheorem{lem}[thm]{\protect\lemmaname}
\theoremstyle{remark}
\newtheorem{rem}[thm]{\protect\remarkname}
\providecommand{\assumptionname}{Assumption}
\providecommand{\definitionname}{Definition}
\providecommand{\lemmaname}{Lemma}
\providecommand{\remarkname}{Remark}
\providecommand{\theoremname}{Theorem}
\begin{document}
\title{An Inexact Primal-Dual Algorithm for Semi-Infinite Programming}
\author{Bo Wei, William B. Haskell, and Sixiang Zhao}
\maketitle
\begin{abstract}
This paper considers an inexact primal-dual algorithm for semi-infinite
programming (SIP) for which it provides general error bounds. To implement
the dual variable update, we create a new prox function for nonnegative
measures which turns out to be a generalization of the Kullback-Leibler
divergence for probability distributions. We show that under suitable
conditions on the error, this algorithm achieves an $\mathcal{O}(1/\sqrt{K})$
rate of convergence in terms of the optimality gap and constraint
violation. We then use our general error bounds to analyze the convergence
and sample complexity of a specific primal-dual SIP algorithm based
on Monte Carlo integration. Finally, we provide numerical experiments
to demonstrate the performance of our algorithm. 
\end{abstract}

\section{Introduction }

A semi-infinite programming (SIP) problem has finitely many variables
appearing in infinitely many constraints. SIP has abundant applications
such as Chebyshev approximation, robotics control, engineering design,
statistical design, and mechanical stress of materials. We refer the
reader to recent surveys in \cite{hettich1993semi,lopez2007semi}.

Primal-dual methods are a natural class of algorithms for dealing
with constrained optimization, especially when the constraints are
complex. Due to the infinitely many constraints in SIP, the SIP constraints
cannot usually be handled exactly by numerical methods. In this paper,
we develop a framework for inexact primal-dual SIP algorithms. A common
template for first-order primal-dual algorithms involves three main
steps: (i) computing the gradients of the Lagrangian with respect
to the primal and dual variables; (ii) taking a primal descent step
and a dual ascent step; and (iii) projecting the output onto any implicit
set constraints (e.g. see \cite{mahdavi2012online}). For the SIP
setting, we focus on inexact execution of these steps and develop
corresponding error bounds.

\subsection{Related Work}

We refer the reader to \cite{hettich1993semi,lopez2007semi,shapiro2009semi,bonnans2013perturbation,goberna2013semi}
for recent detailed overviews of SIP. The main computational difficulty
in SIP comes from the infinitely many constraints, and several practical
schemes have been proposed to remedy this difficulty \cite{reemtsen1998numerical,goberna2002linear,lopez2007semi,goberna2017recent}.
We offer the following very rough classification of SIP methods based
on \cite{hettich1993semi,reemtsen1998numerical,lopez2007semi}. 

\textit{Exchange methods}: In exchange methods, in each iteration
a set of new constraints is exchanged for the previous set (there
are many ways to do this). Cutting plane methods are a special case
where constraints are never dropped. The algorithm in \cite{gribik1979central}
is the prototype for several SIP cutting plane schemes, and it has
been improved in various ways \cite{kortanek1993central,betro2004accelerated,mehrotra2014cutting}.
In particular, a new exchange method is proposed in \cite{zhang2010new}
that only keeps those active constraints with positive Lagrange multipliers.
New constraints are selected using a certain computationally-cheap
criterion. In \cite{mehrotra2014cutting}, the earlier central cutting
plane algorithm from \cite{kortanek1993central} is extended to allow
for nonlinear convex cuts.

Randomized cutting plane algorithms have recently been developed for
SIP in \cite{Calafiore_Uncertain_2005,campi2008exact,esfahani2015performance}.
The idea is to input a probability distribution over the constraints,
randomly sample a modest number of constraints, and then solve the
resulting relaxed problem. Intuitively, as long as a sufficient number
of samples of the constraints is drawn, the resulting randomized solution
should violate only a small portion of the constraints and achieve
near-optimality.

\textit{Discretization methods}: In the discretization approach,
a sequence of relaxed problems with a finite number of constraints
is solved according to a predefined or adaptively controlled grid
generation scheme \cite{reemtsen1991discretization,still2001discretization}.
Discretization methods are generally computationally expensive. The
convergence rate of the error between the solution of the SIP problem
and the solution of the discretized program is investigated in \cite{still2001discretization}. 

\textit{Local reduction methods}: In the local reduction approach,
an SIP problem is reduced to a problem with a finite number of constraints
\cite{gramlich1995local}. The reduced problem involves constraints
which are defined only implicitly, and the resulting problem is solved
via the Newton method which has good local convergence properties.
However, local reduction methods require strong assumptions and are
often conceptual.

\textit{Dual methods}: A wide class of SIP algorithms is based on
directly solving the KKT conditions. In \cite{ito2000dual,liu2002adaptive,liu2004new},
the authors derive Wolfe's dual for an SIP and discuss numerical schemes
for this problem. The KKT conditions often have some degree of smoothness,
and so various Newton-type methods can be applied \cite{qi2003semismooth,li2004smoothing,ni2006truncated,qi2009smoothing}.
However, feasibility is not guaranteed under the all Newton-type methods.
A new smoothing Newton-type method is proposed to overcome this drawback
in \cite{ling2010new}.

\textit{Applications}: SIP is the basis of the approximate linear
programming (ALP) approach for dynamic programming. In \cite{deFarias_Sampling_2004,bhat2012non,esfahani2017infinite},
the authors consider various schemes for solving ALPs. The idea of
random constraint sampling in ALP is developed in \cite{deFarias_Sampling_2004,bhat2012non}.
In \cite{lin2017revisiting}, an adaptive constraint sampling approach
called 'ALP-Secant' is proposed which is based on solving a sequence
of saddle-point problems. It is shown that ALP-Secant returns a near
optimal ALP solution and a lower bound on the optimal cost with high
probability in a finite number of iterations.

Many risk-aware optimization models also depend on SIP (e.g. \cite{noyan2013optimization,noyan2018optimization}),
in particular, risk-constrained optimization (e.g. \cite{dentcheva2003optimization,dentcheva2004optimality,dentcheva2009optimization,dentcheva2015optimization,haskell2017primal,homem2009cutting,hu2012sample}).
In \cite{dentcheva2003optimization,dentcheva2004optimality,dentcheva2009optimization,haskell2013optimization},
a duality theory for stochastic dominance constrained optimization
is developed which shows the special role of utility functions as
Lagrange multipliers. Relaxations of multivariate stochastic dominance
have been proposed based on various parametrized families of utility
functions, see \cite{dentcheva2009optimization,homem2009cutting,hu2012sample,haskell2013optimization}.
Computational aspects of the increasing concave stochastic dominance
constrained optimization are discussed in \cite{homem2009cutting,hu2012sample,haskell2017primal}.

\subsection{Contributions}

We make the following contributions in the present work:
\begin{enumerate}
\item Since the dual variables are nonnegative measures on the constraint
index set in SIP, we create a new prox function for nonnegative measures
which generalizes the Kullback-Leibler divergence for probability
distributions. This new prox function is essential for the dual variable
update in our algorithm. In particular, it allows us to obtain explicit
analytical solutions in the dual variable update.
\item We propose a primal-dual algorithm for SIP that simultaneously optimizes
the primal and dual variables. It is computationally difficult to
implement this algorithm exactly due to the infinite number of constraints,
so we conduct a general error analysis of inexact implementation.
In particular, we show that under suitable conditions on the errors,
an inexact algorithm achieves an $\mathcal{O}(1/\sqrt{K})$ rate of
convergence in terms of the optimality gap and constraint violation.
\item We propose a specific primal-dual SIP algorithm based on Monte Carlo
integration to approximate the high dimensional integrals in the primal
update. We show that as long as the number of samples is large enough,
the Monte Carlo integration can be made into an arbitrarily good approximation
of the high dimensional integrals. Based on our general error bounds,
we show that this specific algorithm achieves an $\mathcal{O}(1/\sqrt{K})$
rate of convergence in probability.
\end{enumerate}
This paper is organized as follows. In Section 2, we review preliminary
material on convex SIP and its duality theory. In Section \ref{sec:The-Regularized-Saddle-Point}
we introduce our new prox function for nonnegative measures, and construct
a regularized saddle-point problem which closely approximates our
original SIP. The following Section \ref{sec:The-Primal-Dual-Algorithm}
analyzes an inexact primal-dual algorithm for the regularized saddle
point problem, and then reports our error analysis. As a specific
example, Monte Carlo integration is used in our inexact primal-dual
algorithm in Section \ref{sec:MC_integration}. All of our main results
are proved within Sections \ref{sec:The-Regularized-Saddle-Point},
\ref{sec:The-Primal-Dual-Algorithm}, and \ref{sec:MC_integration},
and all supporting results are gathered together in Section \ref{sec:Proofs-of-Main Results}
for better readability. We then present some numerical experiments
for the inexact primal-dual algorithm using Monte Carlo integration
in Section \ref{sec:Numerical-Experiments}. Finally, we conclude
the paper in Section \ref{sec:Conclusion}.

\section{\label{sec:Preliminaries}Preliminaries}

We begin with the following ingredients:
\begin{description}
\item [{A1}] Convex, compact decision set $\mathcal{X}\subset\mathbb{R}^{m}$;
\item [{A2}] Convex objective function $f\text{ : }\mathcal{X}\rightarrow\mathbb{R}$,
which is $L_{f}-$Lipschitz continuous and differentiable;
\item [{A3}] Compact, full-dimensional, convex constraint index set $\Xi\subset\mathbb{R}^{d}$;
\item [{A4}] Constraint function $g\text{ : }\mathcal{X}\times\Xi\rightarrow\mathbb{R}$,
such that for all $\xi\in\Xi$, $x\rightarrow g\left(x,\,\xi\right)$
is convex, $L_{g,\mathcal{X}}-$Lipschitz continuous (uniformly in
$\xi$), and continuously differentiable;
\item [{A5}] For all $x\in\mathcal{X}$, $\xi\rightarrow g\left(x,\,\xi\right)$
is $L_{g,\,\Xi}-$Lipschitz continuous (uniformly in $x$).
\end{description}
Define $\mathcal{C}\left(\Xi\right)$ to be the space of continuous
functions in the supremum norm $\left\Vert f\right\Vert _{\mathcal{C}\left(\Xi\right)}\triangleq\sup_{\xi\in\Xi}\left|f(\xi)\right|$.
The cone of all non-negative continuous functions in $\mathcal{C}\left(\Xi\right)$
is denoted $\mathcal{C}_{+}\left(\Xi\right)$, and for any $f_{1},f_{2}\in\mathcal{C}\left(\Xi\right)$,
$f_{1}\leq f_{2}$ means $f_{2}-f_{1}\in\mathcal{C}_{+}\left(\Xi\right)$. 

We may define the mapping $G\text{ : }\mathcal{X}\rightarrow\mathcal{C}\left(\Xi\right)$
where $\left[G\left(x\right)\right]\left(\xi\right)=g\left(x,\,\xi\right)$
for all $\xi\in\Xi$ to succinctly capture all the constraints. The
constraints $G\left(x\right)$ lie in $\mathcal{C}\left(\Xi\right)$,
which is a Banach space. Our (conic form) SIP is then
\[
\mathbb{P}:\inf_{x\in\mathcal{X}}\left\{ f\left(x\right)\mbox{ : }G\left(x\right)\leq0\right\} \equiv\inf_{x\in\mathcal{X}}\left\{ f\left(x\right)\mbox{ : }g\left(x,\,\xi\right)\leq0,\,\forall\xi\in\Xi\right\} .
\]
Problem $\mathbb{P}$ is a convex optimization problem under Assumptions
\textbf{A1}, \textbf{A2}, and \textbf{A4}. We assume that Problem
$\mathbb{P}$ is solvable and that the Slater constraint qualification
holds.
\begin{assumption}
\label{SolvabilityandSlater}(i) (Solvability) An optimal solution
$x^{\ast}$ of Problem \textup{$\mathbb{P}$ exists. }

(ii) (Slater condition) There exists an $\tilde{x}\in\mathcal{X}$
such that $g\left(\tilde{x},\,\xi\right)<0$ for all $\xi\in\Xi$. 
\end{assumption}

Now we recall the Lagrangian dual of Problem $\mathbb{P}$. Let $\mathcal{B}$
be the Borel sigma algebra on $\Xi$ so that $\left(\Omega,\,\Xi\right)$
is a measurable space. The Lagrange multipliers of the conic constraint
$G\left(x\right)\leq0$ will lie in the dual to $\mathcal{C}\left(\Xi\right)$,
which is $\mathcal{C}\left(\Xi\right)^{\ast}=\mathcal{M}\left(\Xi\right)$,
the space of finite signed measures on $(\Xi,\,\mathcal{B})$ equipped
with the total variation norm $\left\Vert \varLambda\right\Vert _{TV}\triangleq\intop_{\Xi}\left|\varLambda\right|(d\xi)$.
For later use, we let $\delta_{\xi}\in\mathcal{M}_{+}\left(\Xi\right)$
denote the point measure at $\xi\in\Xi$.

The duality pairing between $\varLambda\in\mathcal{M}\left(\Xi\right)$
and $f\in\mathcal{C}\left(\Xi\right)$ is given by $\left\langle \varLambda,\,f\right\rangle \triangleq\intop_{\Xi}f(\xi)\varLambda(d\xi)$.
The Lagrangian for Problem $\mathbb{P}$ is $L\text{ : }\mathcal{X}\times\mathcal{M}\left(\Xi\right)\rightarrow\mathbb{R}$
defined by $L\left(x,\,\varLambda\right)\triangleq f\left(x\right)+\left\langle \varLambda,\,G(x)\right\rangle $.
Let $\mathcal{M}_{+}\left(\Xi\right)$ denote the set of non-negative
measures in $\mathcal{M}\left(\Xi\right)$, then we may write the
saddle-point form of Problem $\mathbb{P}$ as 

\[
\mathbb{SP}:\inf_{x\in\mathcal{X}}\sup_{\varLambda\in\mathcal{M}_{+}\left(\Xi\right)}\{f\left(x\right)+\left\langle \varLambda,\,G(x)\right\rangle \}.
\]
A pair $(\widetilde{x},\,\widetilde{\varLambda})\in\mathcal{X}\times\mathcal{M}_{+}\left(\Xi\right)$
is said to be a saddle-point (of Problem $\mathbb{SP}$) if 

\[
L(\widetilde{x},\,\varLambda)\leq L(\widetilde{x},\,\widetilde{\varLambda})\leq L(x,\,\widetilde{\varLambda}),\:\forall(x,\,\varLambda)\in\mathcal{X}\times\mathcal{M}_{+}\left(\Xi\right).
\]

Problem $\mathbb{SP}$ is equivalent to Problem $\mathbb{P}$, since
it assigns infinite cost to any infeasible $x\in\mathcal{X}$ that
does not satisfy $G\left(x\right)\leq0$. The dual to Problem $\mathbb{P}$
is then obtained by interchanging the order of inf and sup in Problem
$\mathbb{SP}$ to obtain

\[
\mathbb{D}:\sup_{\varLambda\in\mathcal{M}_{+}\left(\Xi\right)}\inf_{x\in\mathcal{X}}L\left(x,\,\varLambda\right)\equiv\sup_{\varLambda\in\mathcal{M}_{+}\left(\Xi\right)}d\left(\varLambda\right),
\]
where $d\left(\varLambda\right)\triangleq\inf_{x\in\mathcal{X}}L\left(x,\,\varLambda\right)$
is the dual functional (which is always concave as the infimum of
linear functions). For later use, we let $\text{val}(\mathbb{D})$
denote the optimal value of Problem $\mathbb{D}$. The next result
shows that both optimal sets of the primal and dual Problems $\mathbb{P}$
and $\mathbb{D}$ are non-empty and there is no duality gap under
our assumptions.
\begin{thm}
\label{SP}Suppose Assumption \ref{SolvabilityandSlater} holds.

(i) Let $x^{*}$ be an optimal solution of Problem $\mathbb{P}$,
then there exists a solution $\varLambda^{\ast}\in\mathcal{M}_{+}\left(\Xi\right)$
to Problem $\mathbb{D}$ such that $(x^{\ast},\,\varLambda^{\ast})$
is a saddle point for Problem $\mathbb{SP}$\textup{,} i.e.,

\[
L(x^{\ast},\,\varLambda^{\ast})=\min_{x\in\mathcal{X}}\max_{\varLambda\in\mathcal{M}_{+}\left(\Xi\right)}L(x,\,\varLambda)=\max_{\varLambda\in\mathcal{M}_{+}\left(\Xi\right)}\min_{x\in\mathcal{X}}L(x,\,\varLambda).
\]

(ii) Let $\left(x^{*},\,\varLambda^{*}\right)\in\mathcal{X}\times\mathcal{M}_{+}\left(\Xi\right)$
be a saddle-point of Problem $\mathbb{SP}$, then $x^{*}$ and $\varLambda^{*}$
are primal and dual optimal, respectively.
\end{thm}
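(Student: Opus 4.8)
The plan is to prove (ii) first, since it is the routine direction, and then to establish (i) by the classical Banach-space Lagrangian/perturbation argument, with Slater supplying the constraint qualification.

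For (ii), suppose $(x^{*},\varLambda^{*})$ is a saddle point. The left inequality $L(x^{*},\varLambda)\le L(x^{*},\varLambda^{*})$ for all $\varLambda\in\mathcal{M}_{+}(\Xi)$ reads $\langle\varLambda,G(x^{*})\rangle\le\langle\varLambda^{*},G(x^{*})\rangle$; taking $\varLambda=0$ gives $\langle\varLambda^{*},G(x^{*})\rangle\ge 0$, and if $g(x^{*},\xi_{0})>0$ for some $\xi_{0}$ then $\varLambda=t\,\delta_{\xi_{0}}$ sends the left side to $+\infty$ as $t\to\infty$, a contradiction, so $G(x^{*})\le 0$. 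Nonnegativity of $\varLambda^{*}$ then forces $\langle\varLambda^{*},G(x^{*})\rangle\le 0$, hence $\langle\varLambda^{*},G(x^{*})\rangle=0$ (complementary slackness) and $x^{*}$ is feasible for $\mathbb{P}$. For any feasible $x$ the right inequality gives $f(x^{*})=L(x^{*},\varLambda^{*})\le L(x,\varLambda^{*})=f(x)+\langle\varLambda^{*},G(x)\rangle\le f(x)$, so $x^{*}$ is primal optimal; and since $L(x^{*},\varLambda^{*})\le L(x,\varLambda^{*})$ for all $x\in\mathcal{X}$ we get $d(\varLambda^{*})=\inf_{x\in\mathcal{X}}L(x,\varLambda^{*})=L(x^{*},\varLambda^{*})=f(x^{*})$, which together with weak duality $d(\varLambda)\le f(x^{*})$ for all $\varLambda\in\mathcal{M}_{+}(\Xi)$ shows $\varLambda^{*}$ is dual optimal.

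For (i), define the optimal value function $v:\mathcal{C}(\Xi)\to\mathbb{R}\cup\{\pm\infty\}$ by $v(y):=\inf\{f(x):x\in\mathcal{X},\,G(x)\le y\}$. Convexity of $\mathcal{X}$, of $f$, and of $x\mapsto g(x,\xi)$ makes $v$ convex, and $v$ is nonincreasing with respect to the cone order on $\mathcal{C}(\Xi)$; compactness of $\mathcal{X}$ and continuity of $f$ keep it $>-\infty$, and $v(0)=f(x^{*})$. By compactness of $\Xi$ and continuity of $g(\tilde{x},\cdot)$, the Slater point satisfies $\sup_{\xi\in\Xi}g(\tilde{x},\xi)=:-2\rho<0$, so for every $y$ with $\|y\|_{\mathcal{C}(\Xi)}\le\rho$ we have $G(\tilde{x})\le -2\rho\,\mathbf{1}\le -\rho\,\mathbf{1}\le y$ pointwise, whence $v(y)\le f(\tilde{x})<\infty$ on a norm-ball around $0$. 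A proper convex function bounded above on a neighborhood of a point is continuous, hence subdifferentiable, there, so $\partial v(0)\neq\emptyset$; pick $-\varLambda^{*}\in\partial v(0)$. Because $v$ is nonincreasing, $\langle\varLambda^{*},y\rangle\ge 0$ for all $y\in\mathcal{C}_{+}(\Xi)$, so $\varLambda^{*}$ lies in the dual cone of $\mathcal{C}_{+}(\Xi)$; by the Riesz representation theorem $\mathcal{C}(\Xi)^{*}=\mathcal{M}(\Xi)$ and this dual cone is exactly $\mathcal{M}_{+}(\Xi)$, so $\varLambda^{*}\in\mathcal{M}_{+}(\Xi)$. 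Specializing the subgradient inequality $v(y)\ge v(0)-\langle\varLambda^{*},y\rangle$ to $y=G(x)$ and using $v(G(x))\le f(x)$ yields $L(x,\varLambda^{*})=f(x)+\langle\varLambda^{*},G(x)\rangle\ge f(x^{*})$ for all $x\in\mathcal{X}$. In particular $L(x^{*},\varLambda^{*})\ge f(x^{*})$, while $G(x^{*})\le 0$ and $\varLambda^{*}\ge 0$ give $L(x^{*},\varLambda^{*})\le f(x^{*})$; hence $\langle\varLambda^{*},G(x^{*})\rangle=0$, $L(x^{*},\varLambda^{*})=f(x^{*})$, and the two saddle-point inequalities (the right one just shown, the left one $L(x^{*},\varLambda)=f(x^{*})+\langle\varLambda,G(x^{*})\rangle\le f(x^{*})$ being immediate) hold. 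The displayed chain $L(x^{*},\varLambda^{*})=\min_{x}\max_{\varLambda}L=\max_{\varLambda}\min_{x}L$ then follows from the saddle-point property together with weak duality.

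The main obstacle is the infinite-dimensional constraint qualification: one must know that the value function $v$ is finite and continuous at the origin so that $\partial v(0)\neq\emptyset$, and this is precisely where the SIP structure is used — compactness of $\Xi$ (\textbf{A3}) makes $\mathcal{C}_{+}(\Xi)$ have nonempty interior, and continuity of $g(\tilde{x},\cdot)$ (\textbf{A5}) turns Slater's strict inequality into the uniform gap $\sup_{\xi}g(\tilde{x},\xi)<0$ needed for local boundedness of $v$. The second delicate point is identifying the abstract subgradient of $v$ as a genuine \emph{nonnegative Borel measure}, which rests on the Riesz representation of $\mathcal{C}(\Xi)^{*}$ and the characterization of its dual cone; everything else (convexity and monotonicity of $v$, weak duality, and the purely algebraic saddle-point manipulations) is routine. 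Alternatively, the whole of part (i) can be obtained by citing standard conic Lagrangian duality under an interior-point Slater condition from the references already listed in the excerpt.
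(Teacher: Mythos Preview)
Your proof is correct. Part (ii) follows essentially the same route as the paper: point measures to force feasibility of $x^{*}$, the choice $\varLambda=0$ to obtain complementary slackness, and then the saddle-point inequalities to extract primal and dual optimality.

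Part (i) differs in presentation. The paper simply invokes \cite[Section 8.3, Corollary 1]{luenberger1997optimization} and moves on; you instead unwind that result by constructing the perturbation function $v(y)=\inf\{f(x):x\in\mathcal{X},\,G(x)\le y\}$, using Slater (with compactness of $\Xi$ to get the uniform gap) to show $v$ is bounded above on a norm-ball, hence subdifferentiable at $0$, and then identifying the subgradient as a nonnegative measure via Riesz. This is precisely the argument behind the cited corollary, so the two approaches are equivalent in substance; yours has the advantage of being self-contained and of making explicit where \textbf{A3} and \textbf{A5} enter (nonempty interior of $\mathcal{C}_{+}(\Xi)$ and the uniform Slater gap), at the cost of some length. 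Your closing remark that one could alternatively cite standard conic duality is exactly what the paper does.
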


Theorem \ref{SP} demonstrates the existence of a saddle point for
Problem $\mathbb{SP}$ under Assumption \ref{SolvabilityandSlater},
and reveals the relationship between the saddle-points of Problem
$\mathbb{SP}$ and the solutions of Problems $\mathbb{P}$ and $\mathbb{D}$.

~
\begin{center}
\begin{tabular}{l}
\hline 
Key notation\tabularnewline
\hline 
$\|\cdot\|$ is the Euclidean norm\tabularnewline
$B_{r}\left(x\right)$ is the Euclidean ball with radius $r$ centered
at $x$ \tabularnewline
$\text{vol}\left(\cdot\right)$ is the volume of a set\tabularnewline
$\Gamma(\cdot)$ is the gamma function\tabularnewline
$D\left(\cdot,\,\cdot\right)$ is the Kullback-Leibler divergence\tabularnewline
$\ll$ denotes absolute continuity of measures\tabularnewline
$\text{val}\left(\cdot\right)$ is the optimal value of an optimization
problem\tabularnewline
$\text{sol}\left(\cdot\right)$ is the set of solutions of an optimization
problem\tabularnewline
$D_{\mathcal{X}},\,D_{\Xi}$ is the diameter of $\mathcal{X},\,\Xi$,
respectively\tabularnewline
$G_{\max}$ is an upper bound on $\left\Vert G(x)\right\Vert _{\mathcal{C}\left(\Xi\right)}$
for all $x\in\mathcal{X}$\tabularnewline
\hline 
\end{tabular}
\par\end{center}

\section{\label{sec:The-Regularized-Saddle-Point}The Regularized Saddle-Point
Problem}

In this section, we construct a \textit{regularized} version of Problem
$\mathbb{SP}$ that is more amenable to a primal-dual algorithm. First,
we show that we can restrict the dual feasible region of Problem $\mathbb{D}$
to within a bounded set under the Slater condition (Subsection \ref{subsec:Dual-Bound}).
Second, we introduce a new prox-function for $\mathcal{M}_{+}\left(\Xi\right)$
so that we may do mirror descent type updates for the dual variables
(Subsection \ref{subsec:Distance-metric-between}). Finally, we introduce
the corresponding regularized saddle-point problem (Subsection \ref{subsec:Relationship-between-Original}).

\subsection{\label{subsec:Dual-Bound}Dual Bound}

We first examine the set of dual optimal solutions

\[
\text{sol}\left(\mathbb{D}\right)\triangleq\left\{ \varLambda\in\mathcal{M}_{+}\left(\Xi\right):\,d\left(\varLambda\right)\geq\text{val}(\mathbb{D})\right\} .
\]
Under Assumption \ref{SolvabilityandSlater}, there is an $\tilde{x}\in\mathcal{X}$
that satisfies $\alpha\triangleq\min_{\xi\in\Xi}\left\{ -g(\tilde{x},\xi)\right\} >0$
(this expression is well defined under Assumptions \textbf{\textcolor{black}{A3}}
and \textbf{\textcolor{black}{A5}}). The following Theorem \ref{dualupperbound}
shows that we can restrict the set of optimal solutions of Problem
$\mathbb{D}$ to within a bounded set (in the total variation norm).
\begin{thm}
\label{dualupperbound} For all $\varLambda^{*}\in\mathrm{sol}\left(\mathbb{D}\right)$,
$\left\Vert \text{\ensuremath{\varLambda^{\ast}}}\right\Vert _{TV}\leq(f(\tilde{x})-\mathrm{val}(\mathbb{D}))/\alpha$.
\end{thm}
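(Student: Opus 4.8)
The plan is to exploit weak duality together with the fact that the Slater point $\tilde{x}$ makes every constraint strictly negative, uniformly by the margin $\alpha$. First, fix any $\varLambda^{*}\in\mathrm{sol}\left(\mathbb{D}\right)$. By definition of $\mathrm{sol}\left(\mathbb{D}\right)$ and of $\mathrm{val}(\mathbb{D})$ as the supremum of the concave dual functional $d$, we have $d(\varLambda^{*})=\mathrm{val}(\mathbb{D})$. Next, since $d(\varLambda^{*})=\inf_{x\in\mathcal{X}}L(x,\,\varLambda^{*})$ is an infimum over $\mathcal{X}$ and $\tilde{x}\in\mathcal{X}$, plugging in $\tilde{x}$ gives the upper bound
\[
\mathrm{val}(\mathbb{D})=d(\varLambda^{*})\leq L(\tilde{x},\,\varLambda^{*})=f(\tilde{x})+\left\langle \varLambda^{*},\,G(\tilde{x})\right\rangle .
\]

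The second step is to bound the pairing term. Writing it out, $\left\langle \varLambda^{*},\,G(\tilde{x})\right\rangle =\intop_{\Xi}g(\tilde{x},\,\xi)\,\varLambda^{*}(d\xi)$. Because $g(\tilde{x},\,\xi)\leq-\alpha$ for every $\xi\in\Xi$ (this is exactly the definition of $\alpha$, which is strictly positive under Assumption \ref{SolvabilityandSlater} together with the compactness/continuity hypotheses \textbf{A3} and \textbf{A5}) and because $\varLambda^{*}$ is a nonnegative measure, monotonicity of the integral yields
\[
\intop_{\Xi}g(\tilde{x},\,\xi)\,\varLambda^{*}(d\xi)\leq-\alpha\intop_{\Xi}\varLambda^{*}(d\xi)=-\alpha\,\left\Vert \varLambda^{*}\right\Vert _{TV},
\]
where the last equality uses that for a nonnegative measure the total variation norm coincides with the total mass $\varLambda^{*}(\Xi)$.

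Combining the two displays gives $\mathrm{val}(\mathbb{D})\leq f(\tilde{x})-\alpha\left\Vert \varLambda^{*}\right\Vert _{TV}$, and rearranging (dividing by $\alpha>0$) produces the claimed bound $\left\Vert \varLambda^{*}\right\Vert _{TV}\leq(f(\tilde{x})-\mathrm{val}(\mathbb{D}))/\alpha$. I do not anticipate a genuine obstacle here: the only points requiring a little care are (i) noting that $\alpha>0$ is well defined, which follows from continuity of $g(\tilde{x},\cdot)$ on the compact set $\Xi$ and the strict Slater inequality, and (ii) the identification of $\left\Vert \varLambda^{*}\right\Vert _{TV}$ with $\varLambda^{*}(\Xi)$ for nonnegative $\varLambda^{*}$; everything else is a one-line weak-duality argument. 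One should perhaps also remark that the right-hand side is finite and nonnegative because $\mathrm{val}(\mathbb{D})=\mathrm{val}(\mathbb{P})\leq f(\tilde{x})$ by weak duality and feasibility of $\tilde{x}$, so the bound is non-vacuous.
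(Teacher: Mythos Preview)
Your proof is correct and follows essentially the same route as the paper's: evaluate the dual functional at the Slater point $\tilde{x}$ to get $\mathrm{val}(\mathbb{D})\leq f(\tilde{x})+\langle\varLambda^{*},G(\tilde{x})\rangle$, then use $g(\tilde{x},\xi)\leq-\alpha$ and nonnegativity of $\varLambda^{*}$ to bound $\langle\varLambda^{*},G(\tilde{x})\rangle\leq-\alpha\|\varLambda^{*}\|_{TV}$ and rearrange. Your added remarks on $\alpha>0$, the identification $\|\varLambda^{*}\|_{TV}=\varLambda^{*}(\Xi)$, and the nonnegativity of the right-hand side are helpful clarifications but not required for the argument.
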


The previous result is a key component of our upcoming regularized
saddle-point problem.

\subsection{\label{subsec:Distance-metric-between}Prox Function for Nonnegative
Measures}

First, we define
\[
\mathcal{P}\left(\Xi\right)\triangleq\{\phi\in\mathcal{M}_{+}\left(\Xi\right):\,\int_{\Xi}\phi(d\xi)=1\}
\]
to be the probability simplex on $\Xi$. In this subsection we present
our new prox function for $\mathcal{M}_{+}\left(\Xi\right)$ which
generalizes the Kullback-Leibler (KL) divergence

\[
D\left(\phi,\varphi\right)\triangleq\mathbb{E}_{\widetilde{\xi}\sim\phi}\left[\log\left(\frac{\phi(\widetilde{\xi})}{\varphi(\widetilde{\xi})}\right)\right]=\int_{\Xi}\log(\frac{\phi(\xi)}{\varphi(\xi)})\phi(d\xi)
\]
for probability densities $\phi,\varphi\in\mathcal{P}\left(\Xi\right)$.
Our new prox function for $\mathcal{M}_{+}\left(\Xi\right)$ is defined
as follows.
\begin{defn}
\label{defofB}For any $\varLambda,\,\varGamma\in\mathcal{M}_{+}\left(\Xi\right)$,

\[
B(\varLambda,\,\varGamma)\triangleq\begin{cases}
\intop_{\Xi}\log(\frac{\varLambda(\xi)}{\varGamma(\xi)})\varLambda(d\xi)-\left(\intop_{\Xi}\varLambda(d\xi)-\intop_{\Xi}\varGamma(d\xi)\right), & \varLambda\ll\varGamma\text{ and }\intop_{\Xi}\left|\log(\frac{\varLambda(\xi)}{\varGamma(\xi)})\right|\varLambda(d\xi)<\infty,\\
+\infty, & \text{otherwise.}
\end{cases}
\]
\end{defn}

In addition, we define the key mappings $\rho\text{ : }\mathcal{M}_{+}\left(\Xi\right)\rightarrow\mathbb{R}$
and $\phi\text{ : }\mathcal{M}_{+}\left(\Xi\right)\rightarrow\mathcal{P}\left(\Xi\right)$
via

\[
\begin{array}{rcl}
\rho(\varLambda)\triangleq\left\Vert \varLambda\right\Vert _{TV}\in\mathbb{R}_{+} & \text{and} & \phi(\varLambda)\triangleq\varLambda/\left\Vert \varLambda\right\Vert _{TV}\in\mathcal{P}(\Xi),\end{array}
\]
respectively. The mapping $\rho$ is shorthand for the total variation
norm of a measure, and the mapping $\phi$ is the projection of $\mathcal{M}_{+}\left(\Xi\right)$
onto $\mathcal{P}\left(\Xi\right)$. The function $H:\,\mathbb{R}_{+}\times\mathbb{R}_{++}\rightarrow\mathbb{R}$,
defined via $H(\rho,\,\rho')\triangleq\rho\log(\rho/\rho')-\rho+\rho'$,
also plays an important role in our analysis for connecting our new
concepts and definitions with the KL divergence $D$.
\begin{lem}
\textup{\label{lem:prox_preliminary}(i) }There is a one-to-one correspondence
between measures $\varLambda\in\mathcal{M}_{+}\left(\Xi\right)$ and
pairs $(\rho(\varLambda),\,\phi(\varLambda))\in\mathbb{R}_{+}\times\mathcal{P}(\Xi)$.

(ii) The mapping $\rho\rightarrow H(\rho,\,\rho')$ is convex in $\rho>0$,
for all fixed $\rho'>0$. Moreover, $H(\rho,\,\rho')\geq0$ for all
$\rho>0$ and $\rho'>0$, and $H(\rho,\,\rho')=0$ if and only if
$\rho=\rho'$.
\end{lem}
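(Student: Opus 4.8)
The plan is to handle the two parts separately; both are elementary, and the only point requiring care is the degenerate case $\varLambda=0$ (equivalently $\rho=0$) in part (i).

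For part (i), I would exhibit an explicit two-sided inverse of the map $\Psi:\varLambda\mapsto(\rho(\varLambda),\,\phi(\varLambda))$. First recall that for a \emph{nonnegative} measure the total variation norm equals the total mass, so for $\varLambda\in\mathcal{M}_{+}(\Xi)\setminus\{0\}$ we have $\rho(\varLambda)=\int_{\Xi}\varLambda(d\xi)>0$, and $\phi(\varLambda)=\varLambda/\rho(\varLambda)$ is nonnegative with $\int_{\Xi}\phi(\varLambda)(d\xi)=1$, i.e. $\phi(\varLambda)\in\mathcal{P}(\Xi)$; thus $\Psi$ is well-defined from $\mathcal{M}_{+}(\Xi)\setminus\{0\}$ into $\mathbb{R}_{++}\times\mathcal{P}(\Xi)$. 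I would then define $\Phi:\mathbb{R}_{++}\times\mathcal{P}(\Xi)\to\mathcal{M}_{+}(\Xi)\setminus\{0\}$ by $\Phi(\rho,\,\phi)\triangleq\rho\,\phi$ (the measure $A\mapsto\rho\,\phi(A)$), which is again a nonzero nonnegative measure. A one-line check gives $\rho(\rho\,\phi)=\rho\|\phi\|_{TV}=\rho$ and $\phi(\rho\,\phi)=\rho\,\phi/\rho=\phi$, so $\Psi\circ\Phi=\mathrm{id}$; conversely $\Phi(\Psi(\varLambda))=\rho(\varLambda)\,\phi(\varLambda)=\varLambda$ directly from the definition of $\phi(\varLambda)$, so $\Phi\circ\Psi=\mathrm{id}$. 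Hence $\Psi$ is a bijection between $\mathcal{M}_{+}(\Xi)\setminus\{0\}$ and $\mathbb{R}_{++}\times\mathcal{P}(\Xi)$; the zero measure corresponds to $\rho=0$ (with the convention that the $\mathcal{P}(\Xi)$-component is then immaterial), and with this understanding the stated correspondence with $\mathbb{R}_{+}\times\mathcal{P}(\Xi)$ holds. The reconstruction identity $\varLambda=\rho(\varLambda)\,\phi(\varLambda)$ is what will actually be used in the sequel.

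For part (ii), fix $\rho'>0$ and set $h(\rho)\triangleq H(\rho,\,\rho')=\rho\log(\rho/\rho')-\rho+\rho'$ on $(0,\infty)$. I would simply differentiate: $h'(\rho)=\log(\rho/\rho')$ and $h''(\rho)=1/\rho>0$ for all $\rho>0$, which yields strict convexity of $\rho\mapsto H(\rho,\rho')$. Since $h'$ is strictly increasing with $h'(\rho')=0$, the point $\rho'$ is the unique global minimizer of $h$, and $h(\rho')=\rho'\log 1-\rho'+\rho'=0$; therefore $H(\rho,\rho')=h(\rho)\geq0$ for all $\rho>0$, with equality if and only if $\rho=\rho'$. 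Equivalently — and I would include this as a remark, since it is the conceptually natural formulation — $H(\rho,\rho')$ is exactly the Bregman divergence generated by the strictly convex function $u(\rho)=\rho\log\rho-\rho$ on $(0,\infty)$, because $u(\rho)-u(\rho')-u'(\rho')(\rho-\rho')=H(\rho,\rho')$; nonnegativity and the equality case then follow from strict convexity of $u$ (as $u''(\rho)=1/\rho>0$).

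I do not anticipate any genuine obstacle here: part (i) is a change of variables with an explicit inverse, and part (ii) is a single derivative computation plus the first-order optimality condition. The only thing meriting a word of care is the $\varLambda=0$ / $\rho=0$ boundary in (i), where $\phi(\varLambda)=\varLambda/\|\varLambda\|_{TV}$ is not defined by the formula; I will simply record that the bijection is between the nonzero measures and $\mathbb{R}_{++}\times\mathcal{P}(\Xi)$, which is all that the later arguments require.
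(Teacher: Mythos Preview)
Your proposal is correct and follows essentially the same approach as the paper: for (i) the paper simply writes ``This is obvious,'' while you spell out the explicit inverse $\Phi(\rho,\phi)=\rho\phi$ and flag the $\varLambda=0$ degeneracy; for (ii) both you and the paper compute $h'(\rho)=\log(\rho/\rho')$, $h''(\rho)=1/\rho>0$, and conclude that $\rho'$ is the unique minimizer with value zero. Your Bregman-divergence remark is a pleasant addition but not needed for the argument.
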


We report the main properties of $B$ in the following theorem.
\begin{thm}
\label{propertiesofB}(i) For any $\varLambda,\,\Gamma\in\mathcal{M}_{+}\left(\Xi\right)$,

\[
B(\varLambda,\,\varGamma)=\rho(\varLambda)\,D(\phi(\varLambda),\,\phi(\varGamma))+H(\rho(\varLambda),\,\rho(\varGamma)).
\]

(ii) For any two non-negative measures $\varLambda,\,\varGamma\in\mathcal{M}_{+}\left(\Xi\right)$,
$B(\varLambda,\,\varGamma)$ is non-negative. Further, $B(\varLambda,\,\varGamma)=0$
if and only if $\varLambda=\varGamma$, i.e., $\rho(\varLambda)=\rho(\varGamma)$
and $\phi(\varLambda)=\phi(\varGamma)$. 

(iii) If $\phi,\varphi\in\mathcal{P}\left(\Xi\right)$\textup{ then
$B(\phi,\,\varphi)=D(\phi,\,\varphi)$.}

(iv) The mapping $\varLambda\rightarrow B(\varLambda,\,\varGamma)$
is convex for any fixed $\varGamma\in\mathcal{M}_{+}\left(\Xi\right)$.

(v) For any two non-negative measures $\varLambda,\varGamma\in\mathcal{M}_{+}(\Xi)$
with $\left\Vert \varLambda\right\Vert _{TV}\leq\rho$ and $\left\Vert \varGamma\right\Vert _{TV}\leq\rho$,
where $\rho>0$, we have $B(\varLambda,\,\varGamma)\geq\frac{1}{2\rho}\left\Vert \varLambda-\varGamma\right\Vert _{TV}^{2}$.
\end{thm}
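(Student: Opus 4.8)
The plan is to reduce the statement about general nonnegative measures to the classical Pinsker inequality for probability measures, using the decomposition in part (i), $B(\varLambda,\,\varGamma)=\rho(\varLambda)\,D(\phi(\varLambda),\,\phi(\varGamma))+H(\rho(\varLambda),\,\rho(\varGamma))$. Write $a=\rho(\varLambda)=\|\varLambda\|_{TV}$, $b=\rho(\varGamma)=\|\varGamma\|_{TV}$, $p=\phi(\varLambda)$, $q=\phi(\varGamma)$, so that $\varLambda=ap$ and $\varGamma=bq$ with $p,q\in\mathcal{P}(\Xi)$ and $0\le a,b\le\rho$. The goal is then to show
\[
a\,D(p,\,q)+H(a,\,b)\ \ge\ \frac{1}{2\rho}\,\|ap-bq\|_{TV}^{2}.
\]

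First I would control the right-hand side by the triangle inequality: $\|ap-bq\|_{TV}\le \|ap-aq\|_{TV}+\|aq-bq\|_{TV}=a\|p-q\|_{TV}+|a-b|$. Squaring and using $(u+v)^2\le 2u^2+2v^2$ gives $\|ap-bq\|_{TV}^2\le 2a^2\|p-q\|_{TV}^2+2|a-b|^2$, so it suffices to prove the two separate bounds
\[
a\,D(p,\,q)\ \ge\ \frac{a^2}{2\rho}\,\|p-q\|_{TV}^{2}
\qquad\text{and}\qquad
H(a,\,b)\ \ge\ \frac{|a-b|^2}{2\rho}.
\]
The first follows from Pinsker's inequality $D(p,\,q)\ge \tfrac12\|p-q\|_{TV}^2$ (in the convention where $\|p-q\|_{TV}$ is the total-variation norm of the signed measure $p-q$, i.e.\ twice the usual statistical distance; if the paper's $\|\cdot\|_{TV}$ means $\int|p-q|$ then Pinsker reads $D\ge\tfrac12\|p-q\|_{TV}^2$ directly), multiplied through by $a$ and then weakened using $a\le\rho$, which only makes the bound smaller. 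The second is a purely one-dimensional inequality about $H(a,b)=a\log(a/b)-a+b$; I would prove it by fixing $b$ and applying a second-order Taylor expansion of $a\mapsto H(a,b)$ about $a=b$, where $H(b,b)=0$, $\partial_a H(a,b)=\log(a/b)$ vanishes at $a=b$, and $\partial_a^2 H(a,b)=1/a\ge 1/\rho$ on $(0,\rho]$; integrating the remainder gives $H(a,b)\ge \tfrac{1}{2\rho}(a-b)^2$. (One must be slightly careful with the boundary case $a=0$, where $H(0,b)=b$ and the claimed bound $b\ge b^2/(2\rho)$ holds since $b\le\rho$; the integral argument extends by continuity.)

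The main obstacle I anticipate is bookkeeping around the normalization convention for the total variation norm — specifically making sure the constant in Pinsker's inequality matches the paper's definition $\|\varLambda\|_{TV}=\int_\Xi|\varLambda|(d\xi)$, and handling the degenerate cases where $\rho(\varLambda)=0$ or $\rho(\varGamma)=0$ so that $\phi(\varLambda)$ or $\phi(\varGamma)$ is undefined (these should be treated directly: if $a=0$ the inequality reduces to $b\ge b^2/(2\rho)$; if $b=0<a$ then $\varLambda\not\ll\varGamma$ unless $a=0$ too, so $B=+\infty$ and there is nothing to prove). Everything else is routine once the two scalar inequalities above are in place.
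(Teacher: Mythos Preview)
Your decomposition strategy loses a factor of $2$ and cannot deliver the stated constant $\tfrac{1}{2\rho}$. After $(u+v)^2\le 2u^2+2v^2$ you have
\[
\frac{1}{2\rho}\|ap-bq\|_{TV}^2\ \le\ \frac{a^2}{\rho}\|p-q\|_{TV}^2+\frac{(a-b)^2}{\rho},
\]
so the two ``sufficient'' bounds you would actually need are $a\,D(p,q)\ge \tfrac{a^2}{\rho}\|p-q\|_{TV}^2$ and $H(a,b)\ge \tfrac{(a-b)^2}{\rho}$, not the versions with $\tfrac{1}{2\rho}$ that you wrote. Neither of these sharper bounds holds in general: taking $a=b=\rho$ forces $D(p,q)\ge\|p-q\|_{TV}^2$, while Pinsker only gives $\tfrac12\|p-q\|_{TV}^2$ (and this is asymptotically tight for nearby $p,q$); likewise your own Taylor argument for $H$ yields at best $H(a,b)\ge\tfrac{1}{2\rho}(a-b)^2$, again tight near $a=b=\rho$. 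Thus your route, as written, proves only $B(\varLambda,\varGamma)\ge\tfrac{1}{4\rho}\|\varLambda-\varGamma\|_{TV}^2$.

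The paper avoids this loss by working pointwise rather than splitting into shape and scale. It sets $h(x)=x\log x-x+1$, establishes the scalar inequality $\bigl(\tfrac43+\tfrac23 x\bigr)h(x)\ge(x-1)^2$ for $x\ge0$, and then applies Cauchy--Schwarz:
\[
\|\varLambda-\varGamma\|_{TV}^2=\Bigl(\int\varGamma\,\bigl|\tfrac{\varLambda}{\varGamma}-1\bigr|\Bigr)^2
\le\Bigl(\int\bigl(\tfrac43\varGamma+\tfrac23\varLambda\bigr)\Bigr)\Bigl(\int\varGamma\,h(\tfrac{\varLambda}{\varGamma})\Bigr)\le 2\rho\,B(\varLambda,\varGamma).
\]
This is exactly the standard Csisz\'ar--Kullback--Pinsker argument carried out for unnormalized measures, and it produces the sharp constant directly. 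Your shape/scale split is conceptually clean but the crude square expansion is where the constant leaks away; to rescue it you would need a joint inequality linking the cross term $a\|p-q\|_{TV}\cdot|a-b|$ to pieces of both $a\,D(p,q)$ and $H(a,b)$, which is essentially as hard as the original statement.
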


\begin{rem}
Pinsker's inequality (see \cite{pinsker1960information}) states that
$D(\phi,\,\varphi)\geq\frac{1}{2}\left\Vert \phi-\varphi\right\Vert _{TV}^{2}$
for any two probability measures $\phi,\,\varphi\in\mathcal{P}(\Xi)$.
We interpret Theorem \ref{propertiesofB}(v) as a generalization of
Pinsker's inequality. 
\end{rem}

\subsection{\label{subsec:Relationship-between-Original}The Regularized Saddle-Point
Problem}

In preparation for our primal-dual algorithm, we must modify Problem
$\mathbb{SP}$ to satisfy the following requirements:
\begin{enumerate}
\item The dual feasible region must be bounded.
\item The inner maximization over $\varLambda$ must not return a point
measure, otherwise, the prox term which depends on $B$ would be unbounded
and lead to major technical difficulties.
\item The prox term $B$ must remain bounded for all iterates.
\end{enumerate}
To proceed, recalling $\alpha=\min_{\xi\in\Xi}\left\{ -g(\tilde{x},\xi)\right\} >0$,
we first define $\bar{\rho}(\theta)\triangleq\frac{1}{\alpha}(f(\tilde{x})-\text{val}(\mathbb{D}))+\theta$
for $\theta>0$. We know that all dual optimal solutions are contained
in
\[
\mathcal{V}\triangleq\{\varLambda\in\mathcal{M}_{+}\left(\Xi\right):\:\|\varLambda\|_{TV}\leq\bar{\rho}(\theta)\},
\]
by Theorem \ref{dualupperbound}. Control of the constant $\theta>0$
will appear later in our error bounds.

For some constant $\rho_{0}\in(0,\,\bar{\rho}(\theta)]$, we let $\varLambda_{u}\in\mathcal{M}_{+}\left(\Xi\right)$
be a uniform measure on $\Xi$ such that $\varLambda_{u}(\xi)=\rho_{0}\text{vol}\left(\Xi\right)^{-1}$
for all $\xi\in\Xi$. We may then define the regularized Lagrangian

\[
L_{\kappa}(x,\,\varLambda)\triangleq f\left(x\right)+\left\langle \varLambda,\,G(x)\right\rangle -\kappa\,B\left(\varLambda,\,\varLambda_{u}\right),
\]
where $\kappa\in(0,1]$ is the regularization parameter. Clearly,
$x\rightarrow L_{\kappa}(x,\,\varLambda)$ is convex and $\varLambda\rightarrow L_{\kappa}(x,\,\varLambda)$
is concave due to convexity of $\varLambda\rightarrow B\left(\varLambda,\,\varLambda_{u}\right)$
by Theorem \ref{propertiesofB}(iv). Our resulting regularized saddle-point
problem is
\[
\mathbb{SP}_{\kappa}:\inf_{x\in\mathcal{X}}\sup_{\varLambda\in\mathcal{V}}L_{\kappa}(x,\,\varLambda).
\]

The existence of an inner maximizer in Problem $\mathbb{SP}_{\kappa}$
is guaranteed by Lemma \ref{Hausdorff_compact_uppersemicont}(ii)
(which claims that the set $\mathcal{V}$ is compact in the weak-star
topology) and Lemma \ref{Hausdorff_compact_uppersemicont}(iii) (which
says that the mapping $\varLambda\rightarrow\left\langle \varLambda,\,G(x)\right\rangle -\kappa\,B\left(\varLambda,\,\varLambda_{u}\right)$
is upper semi-continuous with respect to the weak-star topology in
$\mathcal{V}$ for each $x\in\mathcal{X}$). The uniqueness of the
inner maximizer in Problem $\mathbb{SP}_{\kappa}$ is guaranteed from
Lemma \ref{lem:prox_preliminary}, Lemma \ref{Vrhophi} (which verifies
that the inner maximization of Problem $\mathbb{SP}_{\kappa}$ can
be transformed into a two-stage optimization problem) and Lemma \ref{maxoverphi}
(which provides the scaled probability measure of the inner maximization
of Problem $\mathbb{SP}_{\kappa}$). We define $\varLambda_{\kappa}\left(x\right)\in\mathcal{V}$
to be the inner maximizer in Problem $\mathbb{SP}_{\kappa}$ for fixed
$x\in\mathcal{X}$, i.e.,

\begin{equation}
\varLambda_{\kappa}(x)\in\arg\max_{\varLambda\in\mathcal{V}}\left\langle \varLambda,\,G(x)\right\rangle -\kappa\,B\left(\varLambda,\,\varLambda_{u}\right).\label{ReguSoluLambdaRx}
\end{equation}

The existence of a saddle-point $(x_{\kappa}^{\ast},\,\varLambda_{\kappa}^{\ast})$
of Problem $\mathbb{SP}_{\kappa}$ is guaranteed by Fan's minimax
theorem \cite[Theorem 1]{fan1953minimax}.
\begin{thm}
\label{existenceofSP_R} There exists a saddle point $(x_{\kappa}^{\ast},\,\varLambda_{\kappa}^{\ast})\in\mathcal{X}\times\mathcal{V}$
for Problem \textup{$\mathbb{SP}_{\kappa}$,} i.e., 

\[
\mathrm{val}(\mathbb{SP}_{\kappa})=L_{\kappa}(x_{\kappa}^{\ast},\,\varLambda_{\kappa}^{\ast})=\min_{x\in\mathcal{X}}\max_{\varLambda\in\mathcal{V}}L_{\kappa}(x,\,\varLambda)=\max_{\varLambda\in\mathcal{V}}\min_{x\in\mathcal{X}}L_{\kappa}(x,\,\varLambda).
\]
\end{thm}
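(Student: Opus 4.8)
The plan is to verify the hypotheses of Fan's minimax theorem \cite[Theorem 1]{fan1953minimax} and then assemble the saddle point from the resulting minimax equality together with attainment of the outer extrema. I would take $\mathcal{X}$ --- compact and convex by \textbf{A1}, sitting inside $\mathbb{R}^{m}$ --- as the ``compact'' side of the minimax, and $\mathcal{V}$ --- convex, being the intersection of the convex cone $\mathcal{M}_{+}\left(\Xi\right)$ with a total-variation ball --- as the other side, viewed as a subset of $\mathcal{M}\left(\Xi\right)$ equipped with the weak-star topology, with $L_{\kappa}$ as the payoff.

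First I would check the structural hypotheses. For fixed $\varLambda\in\mathcal{V}$, the map $x\mapsto L_{\kappa}(x,\,\varLambda)$ is convex: $f$ is convex by \textbf{A2}; $\left\langle \varLambda,\,G(x)\right\rangle =\int_{\Xi}g(x,\,\xi)\,\varLambda(d\xi)$ is convex in $x$ because $x\mapsto g(x,\,\xi)$ is convex for each $\xi$ by \textbf{A4} and $\varLambda\geq0$; and $-\kappa B(\varLambda,\,\varLambda_{u})$ is constant in $x$. This map is moreover (Lipschitz) continuous on $\mathcal{X}$ by the Lipschitz bounds in \textbf{A2} and \textbf{A4} together with $\|\varLambda\|_{TV}\leq\bar{\rho}(\theta)$, hence in particular lower semicontinuous. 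For fixed $x\in\mathcal{X}$, the map $\varLambda\mapsto L_{\kappa}(x,\,\varLambda)$ is concave, since $\varLambda\mapsto\left\langle \varLambda,\,G(x)\right\rangle $ is linear and $\varLambda\mapsto-\kappa B(\varLambda,\,\varLambda_{u})$ is concave by Theorem \ref{propertiesofB}(iv).

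Fan's theorem then yields $\min_{x\in\mathcal{X}}\sup_{\varLambda\in\mathcal{V}}L_{\kappa}(x,\,\varLambda)=\sup_{\varLambda\in\mathcal{V}}\min_{x\in\mathcal{X}}L_{\kappa}(x,\,\varLambda)=:v$, where the inner $\inf$ over $x$ is attained because $L_{\kappa}(\cdot,\,\varLambda)$ is continuous on the compact set $\mathcal{X}$. To upgrade this to a genuine saddle point I would argue attainment of the two outer extrema. The function $x\mapsto\sup_{\varLambda\in\mathcal{V}}L_{\kappa}(x,\,\varLambda)$ is a supremum of continuous functions, hence lower semicontinuous, and is finite (bounded above by $f(x)+\bar{\rho}(\theta)G_{\max}$ and below by $L_{\kappa}(x,\,\varLambda_{u})$), so it attains its minimum over the compact $\mathcal{X}$ at some $x_{\kappa}^{\ast}$. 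The function $\varLambda\mapsto\min_{x\in\mathcal{X}}L_{\kappa}(x,\,\varLambda)$ is an infimum of maps $\varLambda\mapsto L_{\kappa}(x,\,\varLambda)$ that are weak-star upper semicontinuous on $\mathcal{V}$ --- each $\varLambda\mapsto\left\langle \varLambda,\,G(x)\right\rangle -\kappa B(\varLambda,\,\varLambda_{u})$ being weak-star upper semicontinuous by Lemma \ref{Hausdorff_compact_uppersemicont}(iii) --- hence is itself weak-star upper semicontinuous, so by the weak-star compactness of $\mathcal{V}$ (Lemma \ref{Hausdorff_compact_uppersemicont}(ii)) it attains its maximum at some $\varLambda_{\kappa}^{\ast}$. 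Since $\sup_{\varLambda}L_{\kappa}(x_{\kappa}^{\ast},\,\varLambda)=v=\min_{x}L_{\kappa}(x,\,\varLambda_{\kappa}^{\ast})$, the chain $L_{\kappa}(x_{\kappa}^{\ast},\,\varLambda_{\kappa}^{\ast})\leq\sup_{\varLambda}L_{\kappa}(x_{\kappa}^{\ast},\,\varLambda)=v=\min_{x}L_{\kappa}(x,\,\varLambda_{\kappa}^{\ast})\leq L_{\kappa}(x_{\kappa}^{\ast},\,\varLambda_{\kappa}^{\ast})$ forces equality throughout, so $(x_{\kappa}^{\ast},\,\varLambda_{\kappa}^{\ast})$ is a saddle point with common value $v=\mathrm{val}(\mathbb{SP}_{\kappa})$.

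The only delicate point is the topological bookkeeping on the dual side: the relevant topology on $\mathcal{M}\left(\Xi\right)$ is the weak-star topology, and one must invoke Lemma \ref{Hausdorff_compact_uppersemicont}(ii)--(iii) to get both weak-star compactness of $\mathcal{V}$ and weak-star upper semicontinuity of the $\varLambda$-dependent part of $L_{\kappa}$; note that $B(\varLambda,\,\varLambda_{u})$ may take the value $+\infty$ on part of $\mathcal{V}$, which is harmless both for the suprema (such $\varLambda$ contribute $-\infty$) and for upper semicontinuity. Beyond assembling these ingredients carefully, I do not anticipate any genuine obstacle.
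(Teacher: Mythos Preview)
Your proposal is correct and follows essentially the same route as the paper: both verify the hypotheses of Fan's minimax theorem \cite[Theorem 1]{fan1953minimax} using exactly the same ingredients --- convexity of $x\mapsto L_{\kappa}(x,\varLambda)$ from \textbf{A2}, \textbf{A4}; concavity of $\varLambda\mapsto L_{\kappa}(x,\varLambda)$ from Theorem \ref{propertiesofB}(iv); weak-star compactness of $\mathcal{V}$ and weak-star upper semicontinuity from Lemma \ref{Hausdorff_compact_uppersemicont}. The only difference is that the paper treats both $\mathcal{X}$ and $\mathcal{V}$ as compact Hausdorff spaces and reads off the saddle point directly from Fan's theorem, whereas you invoke Fan with $\mathcal{X}$ as the compact side to obtain the minimax equality and then argue attainment of the two outer extrema separately; your explicit handling of the possibility $B(\varLambda,\varLambda_{u})=+\infty$ is also a point the paper glosses over when it asserts $L_{\kappa}$ is ``real-valued.''
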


We close this section by discussing the choice of the regularization
parameter $\kappa$. In particular, we want to choose $\kappa$ so
that Problem $\mathbb{SP}_{\kappa}$ is ``close'' to Problem $\mathbb{SP}$
in some sense. For this purpose, we introduce the following additional
constants:
\begin{itemize}
\item the radius $R_{\Xi}$ of the largest ball contained in $\Xi$ centered
at $\tilde{\xi}\in\Xi$ (which exists since $\Xi$ is full-dimensional);
\item the ratio
\[
r\triangleq\text{vol}\left(B_{R_{\Xi}}(\tilde{\xi})\right)/\text{vol}\left(\Xi\right)\equiv\frac{\pi^{d/2}}{\Gamma(d/2+1)}R_{\Xi}^{d}/\text{vol}\left(\Xi\right)
\]
between the volume of this ball and $\Xi$ itself;
\item the upper bound $H_{\max}\triangleq\max_{\rho\in[0,\,\bar{\rho}(\theta)]}H(\rho,\,\rho_{0})$;
\item the constant $\bar{C}\left(\theta\right)\triangleq\bar{\rho}(\theta)L_{g,\,\Xi}(R_{\Xi}+D_{\Xi})-\bar{\rho}(\theta)\log\left(r\right)+H_{\max}$.
\end{itemize}
Then, for any $\epsilon>0$, we choose the regularization parameter

\[
\bar{\kappa}(\epsilon)\triangleq\min\left\{ \frac{\epsilon}{2\,\bar{C}\left(\theta\right)},\,\left(\frac{\epsilon}{2\,\bar{\rho}(\theta)d}\right)^{2},\,\frac{\epsilon}{\rho_{0}},\,1\right\} .
\]
The following result demonstrates that, under this choice of $\bar{\kappa}\left(\epsilon\right)$,
the gap between the values of the inner maximization over $\mathcal{V}$
of the regularized Lagrangian and the original Lagrangian can be made
arbitrarily small through our control of $\epsilon$. It follows that
Problem $\mathbb{SP}_{\kappa}$ can be made into an arbitrarily good
approximation of Problem $\mathbb{SP}$.
\begin{thm}
\label{maxsupdiff_less than_epsilon} For any $x\in\mathcal{X}$ and
$\epsilon>0$, 

\[
\max_{\varLambda\in\mathcal{V}}\left\{ \left\langle \varLambda,\,G(x)\right\rangle -\bar{\kappa}(\epsilon)\,B\left(\varLambda,\,\varLambda_{u}\right)\right\} \geq\max_{\varLambda\in\mathcal{V}}\left\{ \left\langle \varLambda,\,G(x)\right\rangle \right\} -\epsilon.
\]
\end{thm}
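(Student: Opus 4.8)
The plan is to bound the left-hand side from below by the value of the objective $\langle\varLambda,G(x)\rangle-\bar{\kappa}(\epsilon)B(\varLambda,\varLambda_{u})$ at one carefully chosen $\varLambda\in\mathcal{V}$, which suffices since the left-hand side is a supremum over $\mathcal{V}$. Fix $x\in\mathcal{X}$ and $\epsilon>0$. Since $g(x,\cdot)$ is continuous on the compact set $\Xi$ (Assumptions \textbf{A3}, \textbf{A5}), it attains its maximum at some $\xi^{*}\in\Xi$, and because every $\varLambda\in\mathcal{V}$ is nonnegative with $\|\varLambda\|_{TV}\le\bar{\rho}(\theta)$ one has $\max_{\varLambda\in\mathcal{V}}\langle\varLambda,G(x)\rangle=\bar{\rho}(\theta)\max\{0,g(x,\xi^{*})\}$, attained by $\bar{\rho}(\theta)\delta_{\xi^{*}}$ when $g(x,\xi^{*})>0$ and by the zero measure otherwise. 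If $g(x,\xi^{*})\le0$ the claim is immediate: the right-hand side equals $-\epsilon$, while taking $\varLambda=0\in\mathcal{V}$ and using $B(0,\varLambda_{u})=H(0,\rho_{0})=\rho_{0}$ (Theorem \ref{propertiesofB}(i)) gives $\langle0,G(x)\rangle-\bar{\kappa}(\epsilon)B(0,\varLambda_{u})=-\bar{\kappa}(\epsilon)\rho_{0}\ge-\epsilon$ since $\bar{\kappa}(\epsilon)\le\epsilon/\rho_{0}$.

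So assume henceforth $g(x,\xi^{*})>0$, so that $\max_{\varLambda\in\mathcal{V}}\langle\varLambda,G(x)\rangle=\bar{\rho}(\theta)\,g(x,\xi^{*})$. The difficulty is that the maximizer here, $\bar{\rho}(\theta)\delta_{\xi^{*}}$, is singular with respect to $\varLambda_{u}$, whence $B(\bar{\rho}(\theta)\delta_{\xi^{*}},\varLambda_{u})=+\infty$, so it must be replaced by an absolutely continuous approximation. Set $s\triangleq\bar{\kappa}(\epsilon)\in(0,1]$ and define the shrunken ball $A_{s}\triangleq(1-s)\xi^{*}+s\,B_{R_{\Xi}}(\tilde{\xi})=B_{sR_{\Xi}}\big((1-s)\xi^{*}+s\tilde{\xi}\big)$. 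By convexity of $\Xi$ (Assumption \textbf{A3}) together with $\{\xi^{*}\}\cup B_{R_{\Xi}}(\tilde{\xi})\subseteq\Xi$, we have $A_{s}\subseteq\Xi$; and by the triangle inequality every point of $A_{s}$ is within distance $s(R_{\Xi}+D_{\Xi})$ of $\xi^{*}$. Let $\varLambda_{s}\in\mathcal{M}_{+}(\Xi)$ be the measure with constant density $\bar{\rho}(\theta)/\text{vol}(A_{s})$ on $A_{s}$ and zero outside; then $\|\varLambda_{s}\|_{TV}=\bar{\rho}(\theta)$, so $\varLambda_{s}\in\mathcal{V}$, and $\varLambda_{s}\ll\varLambda_{u}$ because $\varLambda_{u}$ has strictly positive density throughout $\Xi\supseteq A_{s}$.

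Two estimates remain. First, by the $L_{g,\Xi}$-Lipschitz continuity of $g(x,\cdot)$ (Assumption \textbf{A5}) and the diameter bound on $A_{s}$,
\[
\langle\varLambda_{s},G(x)\rangle=\frac{\bar{\rho}(\theta)}{\text{vol}(A_{s})}\int_{A_{s}}g(x,\xi)\,d\xi\ \ge\ \bar{\rho}(\theta)\big(g(x,\xi^{*})-L_{g,\Xi}\,s(R_{\Xi}+D_{\Xi})\big).
\]
Second, by Theorem \ref{propertiesofB}(i), $B(\varLambda_{s},\varLambda_{u})=\bar{\rho}(\theta)\,D(\phi(\varLambda_{s}),\phi(\varLambda_{u}))+H(\bar{\rho}(\theta),\rho_{0})$; here $\phi(\varLambda_{s})$ and $\phi(\varLambda_{u})$ are the uniform densities on $A_{s}$ and on $\Xi$, so $D(\phi(\varLambda_{s}),\phi(\varLambda_{u}))=\log\big(\text{vol}(\Xi)/\text{vol}(A_{s})\big)=-d\log s-\log r$, using $\text{vol}(A_{s})=s^{d}\,\text{vol}(B_{R_{\Xi}}(\tilde{\xi}))=s^{d}\,r\,\text{vol}(\Xi)$, while $H(\bar{\rho}(\theta),\rho_{0})\le H_{\max}$. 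Combining, recalling $\bar{C}(\theta)=\bar{\rho}(\theta)L_{g,\Xi}(R_{\Xi}+D_{\Xi})-\bar{\rho}(\theta)\log r+H_{\max}$ and $s=\bar{\kappa}(\epsilon)$,
\[
\langle\varLambda_{s},G(x)\rangle-\bar{\kappa}(\epsilon)B(\varLambda_{s},\varLambda_{u})\ \ge\ \max_{\varLambda\in\mathcal{V}}\langle\varLambda,G(x)\rangle-\bar{\kappa}(\epsilon)\,\bar{C}(\theta)-\bar{\rho}(\theta)d\big(-\bar{\kappa}(\epsilon)\log\bar{\kappa}(\epsilon)\big).
\]

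It then remains to show the two error terms sum to at most $\epsilon$. The first satisfies $\bar{\kappa}(\epsilon)\bar{C}(\theta)\le\epsilon/2$ directly from $\bar{\kappa}(\epsilon)\le\epsilon/(2\bar{C}(\theta))$. For the second, the elementary inequality $-u\log u\le\sqrt{u}$ on $(0,1]$ gives $\bar{\rho}(\theta)d(-\bar{\kappa}(\epsilon)\log\bar{\kappa}(\epsilon))\le\bar{\rho}(\theta)d\sqrt{\bar{\kappa}(\epsilon)}\le\epsilon/2$, using $\bar{\kappa}(\epsilon)\le(\epsilon/(2\bar{\rho}(\theta)d))^{2}$. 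Adding these yields the theorem. The one genuinely delicate point — and the reason for the precise form of $\bar{\kappa}(\epsilon)$ — is the trade-off governed by the ball radius $s$: shrinking the ball tightens the Lipschitz approximation of $g$ near $\xi^{*}$ (the term linear in $s$) but inflates the Kullback-Leibler term like $-d\log s$; the choice $s=\bar{\kappa}(\epsilon)$ together with $-u\log u\le\sqrt{u}$ is exactly what keeps both contributions below $\epsilon/2$.
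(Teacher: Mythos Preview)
Your proof is correct and reaches exactly the same quantitative bound as the paper, namely
\[
\max_{\varLambda\in\mathcal{V}}\bigl\{\langle\varLambda,G(x)\rangle-\bar{\kappa}(\epsilon)B(\varLambda,\varLambda_{u})\bigr\}
\ \ge\ \max_{\varLambda\in\mathcal{V}}\langle\varLambda,G(x)\rangle-\bar{\kappa}(\epsilon)\bar{C}(\theta)+\bar{\rho}(\theta)d\,\bar{\kappa}(\epsilon)\log\bar{\kappa}(\epsilon),
\]
followed by the identical endgame $\bar{\kappa}(\epsilon)\bar{C}(\theta)\le\epsilon/2$ and $-\bar{\kappa}(\epsilon)\log\bar{\kappa}(\epsilon)\le\sqrt{\bar{\kappa}(\epsilon)}$.

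The route to that bound, however, is genuinely different. The paper derives it via Lemma~\ref{maxsupdiff}, which in turn rests on (i) the two-stage decomposition of Lemma~\ref{Vrhophi}, (ii) the closed-form Euler--Lagrange maximizer $\phi_{\kappa}(x)\propto\exp(g(x,\cdot)/\kappa)$ of Lemma~\ref{maxoverphi}, and (iii) the log-sum-exp lower bound of Lemma~\ref{importbound}. You bypass all of this: you simply plug in the explicit test measure $\varLambda_{s}$, uniform on the shrunken ball $A_{s}=(1-s)\xi^{\ast}+sB_{R_{\Xi}}(\tilde{\xi})$, and read off both $\langle\varLambda_{s},G(x)\rangle$ and $B(\varLambda_{s},\varLambda_{u})$ directly. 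Interestingly, the very same ball appears buried inside the paper's Lemma~\ref{importbound}, so the underlying geometry is shared; what you gain is that your argument is self-contained and elementary, needing none of the calculus-of-variations apparatus, while what the paper's route buys is the exact maximizer $\phi_{\kappa}(x)$ as a by-product, which it reuses elsewhere (e.g.\ in characterizing $\varLambda_{\kappa}(x)$ for Lemma~\ref{LemmaQB}).
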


\begin{proof}
From Lemma \ref{maxsupdiff} (which gives an explicit bound on the
gap between the values of the inner maximization of the regularized
Lagrangian and the original Lagrangian), it is sufficient to show
that the gap bound $\bar{\kappa}(\epsilon)\bar{C}\left(\theta\right)-\bar{\rho}(\theta)\bar{\kappa}(\epsilon)\log(\bar{\kappa}(\epsilon))d$
can be made arbitrarily small. In fact, from the definition of $\bar{\kappa}(\epsilon)$,
we have 

\[
\bar{\rho}(\theta)\bar{\kappa}(\epsilon)\log(\bar{\kappa}(\epsilon))d-\bar{\kappa}(\epsilon)\bar{C}\left(\theta\right)\geq\bar{\rho}(\theta)\bar{\kappa}(\epsilon)\log(\bar{\kappa}(\epsilon))d-\frac{\epsilon}{2}\geq-\bar{\rho}(\theta)\sqrt{\bar{\kappa}(\epsilon)}d-\frac{\epsilon}{2}\geq-\epsilon,
\]
where the first inequality holds since $\bar{\kappa}(\epsilon)\leq\frac{\epsilon}{2\bar{C}\left(\theta\right)}$,
the second holds because $\log(\bar{\kappa}(\epsilon))\geq-1/\sqrt{\bar{\kappa}(\epsilon)}$,
and the last one follows from $\bar{\kappa}(\epsilon)\leq\left(\frac{\epsilon}{2\bar{\rho}(\theta)d}\right)^{2}$.
\end{proof}

\section{\label{sec:The-Primal-Dual-Algorithm}The Inexact Primal-Dual Algorithm}

In this section, we consider an inexact primal-dual algorithm for
Problem $\mathbb{SP}_{\kappa}$, and this algorithm is ultimately
used to recover a solution to Problem $\mathbb{P}$. In particular,
we will focus on inexact primal updates since the dual update has
a closed form solution.

Given primal iterate $x_{k}$ at iteration $k\geq0$, the conceptual
step for the updated $x_{k+1}$ at iteration $k+1$ is
\begin{equation}
\arg\min_{x\in\mathcal{X}}\left[\gamma\left\langle x-x_{k},\,\nabla f(x_{k})+\left\langle \varLambda_{k},\,\nabla G\left(x_{k}\right)\right\rangle \right\rangle +\frac{1}{2}\|x-x_{k}\|^{2}\right],\label{primalupdate}
\end{equation}
where $\gamma>0$ is the step length. The first term in square brackets
in (\ref{primalupdate}) focuses on improvement in the objective value.
It discourages moving in directions that are not perpendicular to
$\nabla_{x}L\left(x_{k},\,\varLambda_{k}\right)=\nabla f(x_{k})+\left\langle \varLambda_{k},\,\nabla G\left(x_{k}\right)\right\rangle $.
The second term (Euclidean prox-function) stresses feasibility of
$x_{k+1}$ and discourages movement too far away from the current
$x_{k}.$ Furthermore, the second term is strongly convex which ensures
that (\ref{primalupdate}) has a unique solution. 

The main difficulty in Problem (\ref{primalupdate}) is that the term
$\left\langle \varLambda_{k},\,\nabla G\left(x_{k}\right)\right\rangle $
cannot be evaluated exactly. There are two difficulties here: (i)
it is a high dimensional integral, and (ii) we cannot compute the
gradient $\nabla G\left(x_{k}\right)$ since it involves computing
$\nabla g\left(x_{k},\xi\right)$ for all $\xi\in\Xi$. Instead, we
approximate it with some function $\mathcal{G}_{k}(x_{k},\varLambda_{k})\approx\left\langle \varLambda_{k},\,\nabla G\left(x_{k}\right)\right\rangle $
and denote the approximation error by

\[
\varepsilon_{k}\triangleq\mathcal{G}_{k}(x_{k},\varLambda_{k})-\left\langle \varLambda_{k},\,\nabla G\left(x_{k}\right)\right\rangle ,\quad\forall k\geq0.
\]
These approximation errors will appear in our final bounds on the
optimality gap and constraint violation. The corresponding inexact
primal update is then:

\begin{equation}
x_{k+1}=\arg\min_{x\in\mathcal{X}}\left[\gamma\langle x-x_{k},\,\nabla f(x_{k})+\mathcal{G}_{k}(x_{k},\varLambda_{k})\rangle+\frac{1}{2}\|x-x_{k}\|^{2}\right].\label{inexactprimalupdate}
\end{equation}

The dual update for $k\geq0$ is
\begin{equation}
\varLambda_{k+1}\in\arg\min_{\varLambda\in\mathcal{V}}\left[\gamma\left(\left\langle \varLambda-\varLambda_{k},\,-G\left(x_{k}\right)\right\rangle +\kappa\,B\left(\varLambda,\,\varLambda_{u}\right)\right)+B\left(\varLambda,\,\varLambda_{k}\right)\right],\label{dualupdate}
\end{equation}
where the regularization term $B\left(\varLambda,\,\varLambda_{u}\right)$
guarantees that the dual update will not be a point measure since
point measures are not absolutely continuous with respect to $\varLambda_{u}$,
and the third term $B\left(\varLambda,\,\varLambda_{k}\right)$ discourages
movement away from the current iterate $\varLambda_{k}$. Problem
(\ref{dualupdate}) is a convex optimization problem due to the convexity
of $\varLambda\rightarrow B(\varLambda,\,\varGamma)$ by Theorem \ref{propertiesofB}(iv).

The implementation details of our inexact primal-dual algorithm are
given in Algorithm \ref{alg:The-Inexact-Primal-Dual}. To define the
step length $\gamma$ in the primal and dual updates, we introduce
the following constants:
\begin{itemize}
\item $C'(\epsilon,\,\theta)\triangleq-\bar{\rho}(\theta)\log(\kappa(\epsilon))d+\bar{\rho}(\theta)L_{g,\,\Xi}(R_{\Xi}+D_{\Xi})-\bar{\rho}(\theta)\log\left(r\right)+H_{\max}$;
\item $C(\epsilon,\,\theta)\triangleq\max\left\{ \rho_{0},\,C'(\epsilon,\,\theta)\right\} $.
\end{itemize}
\begin{algorithm}
\caption{\label{alg:The-Inexact-Primal-Dual}The Inexact Primal-Dual Algorithm}

\textbf{\textcolor{black}{Input: }}Number of iterations $K\geq1$,
initial points $x_{0}\in\mathcal{X},$ $\varLambda_{0}=\varLambda_{u}$$,$
and constant step length

\[
\gamma=\sqrt{\frac{2(C(\epsilon,\,\theta)+D_{\mathcal{X}})}{K(\bar{\rho}(\theta)G_{\max}^{2}+2(L_{f}+\bar{\rho}(\theta)L_{g,\,\mathcal{X}})^{2})}}.
\]

\textbf{\textcolor{black}{For}} $k=0,1,2,\ldots,K-2$ \textbf{\textcolor{black}{do}} 

$\qquad$Obtain $x_{k+1}$ using (\ref{inexactprimalupdate}),

$\qquad$Update $\varLambda_{k+1}$ using (\ref{dualupdateexplicit}).

\textbf{\textcolor{black}{end for}} 

Set $\bar{x}_{K}=\frac{1}{K}\sum_{k=0}^{K-1}x_{k}$ and $\bar{\varLambda}_{K}=\frac{1}{K}\sum_{k=0}^{K-1}\varLambda_{k}$.

\textbf{\textcolor{black}{Return:}} $\bar{x}_{K}$.
\end{algorithm}

We now proceed to analyze the output of Algorithm \ref{alg:The-Inexact-Primal-Dual}.
For any probability measure $\phi\in\mathcal{P}\left(\Xi\right)$
and any $x\in\mathcal{X}$, we define $\mathbb{E}_{\phi}[g\left(x,\,\xi\right)]\triangleq\left\langle \phi,G(x)\right\rangle $.
First, we observe that the special structure of Problem (\ref{dualupdate})
yields an analytical solution by the calculus of variations. 
\begin{thm}
\label{dualupdatesolution} The optimal solution of Problem (\ref{dualupdate})
is given by

\begin{equation}
\begin{array}{rl}
\varLambda_{k+1}(\xi)=\, & \min\left\{ \bar{\rho}(\theta)/\left(\left(\rho_{0}/\mathrm{vol}\left(\Xi\right)\right)^{\frac{\gamma\,\kappa}{1+\gamma\,\kappa}}\intop_{\Xi}\exp\left(\frac{\gamma\,g(x_{k},\,\xi)}{1+\gamma\,\kappa}\right)(\varLambda_{k}(\xi))^{\frac{1}{1+\gamma\,\kappa}}d\xi\right),1\right\} \\
 & \times\left(\rho_{0}/\text{vol}\left(\Xi\right)\right)^{\frac{\gamma\,\kappa}{1+\gamma\,\kappa}}\exp\left(\frac{\gamma\,g(x_{k},\,\xi)}{1+\gamma\,\kappa}\right)(\varLambda_{k}(\xi))^{\frac{1}{1+\gamma\,\kappa}},
\end{array}\label{dualupdateexplicit}
\end{equation}
for all $\xi\in\Xi$.
\end{thm}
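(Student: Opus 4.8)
The plan is to solve Problem (\ref{dualupdate}) directly via the calculus of variations, exploiting the additive structure of $B$. First I would substitute the explicit form of $B(\varLambda,\varLambda_u)$ and $B(\varLambda,\varLambda_k)$ (using Definition \ref{defofB}) into the objective of (\ref{dualupdate}). Since $\varLambda_u(\xi)=\rho_0\,\mathrm{vol}(\Xi)^{-1}$ is a strictly positive density and $\varLambda_k$ is (inductively) a positive density bounded above by $\bar\rho(\theta)$, we may write $\varLambda(d\xi)=\lambda(\xi)\,d\xi$ and turn the whole problem into an optimization over the density function $\lambda:\Xi\to\mathbb{R}_+$. The objective becomes
\[
\gamma\!\int_\Xi\!\big(\!-g(x_k,\xi)\big)\lambda(\xi)\,d\xi
+\gamma\kappa\!\int_\Xi\!\Big(\lambda\log\tfrac{\lambda}{\rho_0/\mathrm{vol}(\Xi)}-\lambda\Big)d\xi
+\!\int_\Xi\!\Big(\lambda\log\tfrac{\lambda}{\lambda_k}-\lambda\Big)d\xi+\text{const},
\]
subject to $\lambda\ge 0$ and $\int_\Xi\lambda(\xi)\,d\xi\le\bar\rho(\theta)$.

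Next I would handle the inequality constraint $\int_\Xi\lambda\le\bar\rho(\theta)$ with a single scalar Lagrange multiplier $\mu\ge 0$ (KKT), since the integral-bound constraint is one-dimensional. For fixed $\mu$, the problem becomes a pointwise (unconstrained in the sign-relaxed sense) minimization of a strictly convex integrand in $\lambda(\xi)$ for each $\xi$: setting the variational derivative to zero gives
\[
\gamma\big(-g(x_k,\xi)\big)+\gamma\kappa\log\tfrac{\lambda(\xi)\,\mathrm{vol}(\Xi)}{\rho_0}+\log\tfrac{\lambda(\xi)}{\lambda_k(\xi)}+\mu=0 .
\]
Collecting the $\log\lambda(\xi)$ terms (coefficient $1+\gamma\kappa$) and solving yields
\[
\lambda(\xi)=e^{-\mu/(1+\gamma\kappa)}\,\big(\rho_0/\mathrm{vol}(\Xi)\big)^{\frac{\gamma\kappa}{1+\gamma\kappa}}\exp\!\Big(\tfrac{\gamma\,g(x_k,\xi)}{1+\gamma\kappa}\Big)\,\lambda_k(\xi)^{\frac{1}{1+\gamma\kappa}} .
\]
This already exhibits the structural form of (\ref{dualupdateexplicit}): the scalar prefactor $e^{-\mu/(1+\gamma\kappa)}$ is the only free quantity, and it is determined by complementary slackness. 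If the unconstrained optimum (i.e.\ $\mu=0$) already satisfies $\int_\Xi\lambda\le\bar\rho(\theta)$, then $\mu=0$ and the prefactor is $1$; otherwise $\mu>0$ is chosen so that $\int_\Xi\lambda=\bar\rho(\theta)$, which forces $e^{-\mu/(1+\gamma\kappa)}=\bar\rho(\theta)\big/\!\big((\rho_0/\mathrm{vol}(\Xi))^{\gamma\kappa/(1+\gamma\kappa)}\int_\Xi\exp(\gamma g(x_k,\xi)/(1+\gamma\kappa))\lambda_k(\xi)^{1/(1+\gamma\kappa)}d\xi\big)$. Taking the $\min$ of these two cases gives exactly the $\min\{\cdot,1\}$ factor in (\ref{dualupdateexplicit}). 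Finally I would invoke strict convexity of the objective (Theorem \ref{propertiesofB}(iv), together with strict convexity of $H$ from Lemma \ref{lem:prox_preliminary}(ii)) to conclude the minimizer is unique and hence equals this closed form; I would also record inductively that $\varLambda_{k+1}$ is again a positive density with $\|\varLambda_{k+1}\|_{TV}\le\bar\rho(\theta)$, so the induction hypothesis used at the start is maintained.

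The main obstacle is making the variational argument rigorous rather than formal: one must justify that the minimizer is absolutely continuous with respect to $\varLambda_u$ (so that $B(\varLambda,\varLambda_u)<\infty$ is not automatically violated), that interchanging "minimize over densities" with "minimize pointwise" is valid (a standard but nontrivial measurable-selection / interchange-of-$\inf$-and-$\int$ step), and that the single scalar KKT multiplier $\mu$ genuinely captures the constraint $\|\varLambda\|_{TV}\le\bar\rho(\theta)$ with no need for a function-valued multiplier on the sign constraint $\lambda\ge 0$ (which holds because the candidate solution is automatically strictly positive, so that constraint is inactive). Once these points are dispatched — essentially by appealing to strict convexity and coercivity of the entropic objective — the algebra reducing the stationarity condition to (\ref{dualupdateexplicit}) is routine.
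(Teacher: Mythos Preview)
Your proposal is correct and reaches the same closed form, but the route differs from the paper's. The paper exploits Theorem \ref{propertiesofB}(i) to decompose any $\varLambda\in\mathcal{V}$ as $(\rho(\varLambda),\phi(\varLambda))\in[0,\bar\rho(\theta)]\times\mathcal{P}(\Xi)$, so that the objective splits as
\[
\rho\bigl[D(\phi,\phi(\varLambda_k))+\gamma\kappa\,D(\phi,\phi_u)-\gamma\,\mathbb{E}_\phi g(x_k,\xi)\bigr]+\gamma\kappa\,H(\rho,\rho_0)+H(\rho,\rho(\varLambda_k)),
\]
and then solves in two stages: first the inner problem over $\phi\in\mathcal{P}(\Xi)$ via Euler's equation with the \emph{equality} constraint $\int\phi=1$ (yielding $\phi(\varLambda_{k+1})$ explicitly), and then a scalar convex problem in $\rho\in[0,\bar\rho(\theta)]$ whose first-order condition and box constraint give $\rho(\varLambda_{k+1})$ as a minimum of two expressions. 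The product $\rho(\varLambda_{k+1})\phi(\varLambda_{k+1})$ is then simplified to (\ref{dualupdateexplicit}).

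Your approach instead keeps $\lambda$ undecomposed and handles the total-variation bound with a single KKT multiplier $\mu\ge 0$ plus complementary slackness, so the $\min\{\cdot,1\}$ prefactor falls out of the two KKT cases directly. This is arguably leaner: it avoids the $(\rho,\phi)$ machinery and arrives at the formula in one variational step. The paper's decomposition, on the other hand, dovetails with its broader theme (the relation between $B$ and the KL divergence via Theorem \ref{propertiesofB}(i)) and turns the inequality constraint into a simple box constraint on a scalar, which some readers may find more transparent than invoking complementary slackness in an infinite-dimensional setting. Both arguments ultimately rely on the same strict convexity to certify uniqueness, and the rigor concerns you flag (absolute continuity of the minimizer, pointwise minimization, inactivity of the sign constraint) apply equally to both.
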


\begin{proof}
There is a one-to-one correspondence between $\varLambda\in\mathcal{V}$
and pairs $(\rho(\varLambda),\,\phi(\varLambda))\in[0,\,\bar{\rho}(\theta)]\times\mathcal{P}(\Xi)$
by Lemma \ref{lem:prox_preliminary}(i). From Theorem \ref{propertiesofB}(i),
we directly obtain

\[
\begin{array}{rcl}
 &  & \gamma\left(\left\langle \varLambda,\,-G\left(x_{k}\right)\right\rangle +\kappa\,B\left(\varLambda,\,\varLambda_{u}\right)\right)+B\left(\varLambda,\,\varLambda_{k}\right)\\
 & = & \rho(\varLambda)\left[D(\phi(\varLambda),\,\phi(\varLambda_{k}))+\gamma\kappa\,D(\phi(\varLambda),\phi_{u})-\gamma\mathbb{E}_{\phi(\varLambda)}[g\left(x_{k},\,\xi\right)]\right]+\gamma\kappa\,H(\rho(\varLambda),\,\rho_{0})+H(\rho(\varLambda),\,\rho(\varLambda_{k})),
\end{array}
\]
which implies that the optimal solution of (\ref{first-y-update})
over $\phi(\varLambda)\in\mathcal{\mathcal{P}}(\Xi)$ does not depend
on the outer variable $\rho(\varLambda)$. Thus, the optimization
Problem (\ref{dualupdate}) over $\varLambda\in\mathcal{V}$ is equivalent
to solving a two-stage optimization problem: in the first stage, we
optimize over $\phi\in\mathcal{\mathcal{P}}(\Xi)$ to obtain $\phi(\varLambda_{k+1})$,

\begin{equation}
\phi(\varLambda_{k+1})\in\arg\min_{\phi\in\mathcal{\mathcal{P}}(\Xi)}\left[D(\phi,\,\phi(\varLambda_{k}))+\gamma\kappa\,D(\phi,\,\phi_{u})-\gamma\mathbb{E}_{\phi}[g\left(x_{k},\,\xi\right)]\right],\label{first-y-update}
\end{equation}
and in the second stage, we optimize over $\rho\in[0,\,\bar{\rho}(\theta)]$
to obtain $\rho(\varLambda_{k+1})$,

\begin{equation}
\begin{array}{rcl}
\rho(\varLambda_{k+1})\in & \arg\min_{\rho\in[0,\,\bar{\rho}(\theta)]} & \rho\left[D(\phi(\varLambda_{k+1}),\,\phi(\varLambda_{k}))+\gamma\kappa\,D(\phi(\varLambda_{k+1}),\,\phi_{u})-\gamma\mathbb{E}_{\phi(\varLambda_{k+1})}[g\left(x_{k},\,\xi\right)]\right]\\
 &  & +\gamma\kappa\,H(\rho,\,\rho_{0})+H(\rho,\,\rho(\varLambda_{k})).
\end{array}\label{second-rho-update}
\end{equation}
The measure $\rho(\varLambda_{k+1})\phi(\varLambda_{k+1})\in\mathcal{V}$
is then a solution of Problem (\ref{dualupdate}). Problem (\ref{first-y-update})
is a convex optimization problem due to the convexity of $\phi\rightarrow D(\phi,\,\varphi)$
for all fixed $\varphi$ (see \cite{mackay2003information}). Furthermore,
Problem (\ref{first-y-update}) is a constrained calculus of variations
problem:

\[
\begin{array}{rcl}
\min_{\phi\in\mathcal{M}_{+}\left(\Xi\right)} &  & \intop_{\Xi}\log\left(\frac{\phi(\xi)}{\phi(\varLambda_{k})(\xi)}\right)\phi(\xi)+\gamma\kappa\log\left(\phi(\xi)\text{vol}\left(\Xi\right)\right)\phi(\xi)-\gamma g\left(x_{k},\,\xi\right)\phi(\xi)d\xi\\
s.t. &  & \intop_{\Xi}\phi(\xi)d\xi=1.
\end{array}
\]
Using Euler's equation (see \cite[Section 7.5]{luenberger1997optimization}),
we obtain

\begin{equation}
(1+\gamma\kappa)\log\left(\phi(\xi)\right)=\gamma g\left(x_{k},\,\xi\right)+\log\left(\phi(\varLambda_{k})(\xi)\right)+C,\,\forall\xi\in\Xi,\label{Euler}
\end{equation}
where $C=-\gamma\kappa\log(\text{vol}\left(\Xi\right))-\gamma\kappa-1-\upsilon$
and $\upsilon\in\mathbb{R}$ is the Lagrange multiplier of the equality
constraint $\intop_{\Xi}\phi(\xi)d\xi=1$. From (\ref{Euler}) and
the constraint $\intop_{\Xi}\phi(\xi)d\xi=1$, we obtain

\[
\phi(\varLambda_{k+1})(\xi)=\frac{(\phi(\varLambda_{k})(\xi))^{(1+\gamma\kappa)^{-1}}\exp\left(\gamma g(x_{k},\,\xi)/(1+\gamma\kappa)\right)}{\intop_{\Xi}(\phi(\varLambda_{k})(\xi))^{(1+\gamma\kappa)^{-1}}\exp\left(\gamma g(x_{k},\,\xi)/(1+\gamma\kappa)\right)d\xi},\:\forall\xi\in\Xi.
\]

Problem (\ref{second-rho-update}) is a convex optimization problem
due to  the convexity of $\rho\rightarrow H(\rho,\,\rho')$ for all
fixed $\rho'>0$ (see Lemma \ref{lem:prox_preliminary}). By direct
calculation and simplification, we obtain 

\[
\begin{array}{rcl}
 &  & D(\phi(\varLambda_{k+1}),\,\phi(\varLambda_{k}))+\gamma\kappa\,D(\phi(\varLambda_{k+1}),\phi_{u})-\gamma\mathbb{E}_{\phi(\varLambda_{k+1})}[g\left(x_{k},\,\xi\right)]\\
 & = & -(1+\gamma\kappa)\log\left(\intop_{\Xi}(\phi(\varLambda_{k})(\xi))^{\frac{1}{1+\gamma\kappa}}\exp\left(\frac{\gamma g(x_{k},\,\xi)}{1+\gamma\kappa}\right)d\xi\right)+\gamma\kappa\log(\text{vol}\left(\Xi\right)).
\end{array}
\]
Define the following function of the scalar variable $\rho\geq0$, 

\[
\begin{array}{rcl}
F(\rho) & \triangleq & -\left[(1+\gamma\kappa)\log\left(\intop_{\Xi}(\phi(\varLambda_{k})(\xi))^{\frac{1}{1+\gamma\kappa}}\exp\left(\frac{\gamma g(x_{k},\,\xi)}{1+\gamma\kappa}\right)d\xi\right)-\gamma\kappa\log(\text{vol}\left(\Xi\right))\right]\rho\\
 &  & +\gamma\kappa\,H(\rho,\,\rho_{0})+H(\rho,\,\rho(\varLambda_{k})).
\end{array}
\]
Then, we may write Problem (\ref{second-rho-update}) as $\rho(\varLambda_{k+1})\in\arg\min_{\rho\in[0,\,\bar{\rho}(\theta)]}F(\rho)$.
From the calculation $\frac{\partial H(\rho,\,\rho')}{\partial\rho}=\log(\frac{\rho}{\rho'})$,
we observe that $F'(\rho)$ is increasing in $\rho$, $\lim_{\rho\downarrow0}F'(\rho)=-\infty$,
and $\lim_{\rho\uparrow\infty}F'(\rho)=\infty$. Therefore, we obtain

\[
\rho(\varLambda_{k+1})=\min\left\{ (\rho_{0}\text{vol}\left(\Xi\right)^{-1})^{\frac{\gamma\kappa}{1+\gamma\kappa}}(\rho(\varLambda_{k}))^{\frac{1}{1+\gamma\kappa}}\intop_{\Xi}(\phi(\varLambda_{k})(\xi))^{\frac{1}{1+\gamma\kappa}}\exp\left(\frac{\gamma g(x_{k},\,\xi)}{1+\gamma\kappa}\right)d\xi,\,\bar{\rho}(\theta)\right\} .
\]
We complete the proof by selecting $\varLambda_{k+1}=\rho(\varLambda_{k+1})\phi(\varLambda_{k+1})$. 
\end{proof}
The next theorem gives general error bounds for our inexact primal-dual
algorithm. Specifically, it bounds the optimality gap and constraint
violation of the averaged solution $\overline{x}_{K}$. The term 
\[
\mu(\epsilon,\theta)\triangleq\sqrt{2(C(\epsilon,\,\theta)+D_{\mathcal{X}})\left(\bar{\rho}(\theta)G_{\max}^{2}+2(L_{f}+\bar{\rho}(\theta)L_{g,\,\mathcal{X}})^{2}\right)}
\]
will feature prominently in these bounds. Naturally, $\mu(\epsilon,\theta)$
is increasing in $\theta$, $D_{\mathcal{X}}$, $D_{\Xi}$, $G_{\max},$
$L_{f},$ $L_{g,\,\mathcal{X}}$, $L_{g,\,\Xi}$, and $d$, and is
decreasing in $\epsilon$.
\begin{thm}
\label{PSMDthm} Choose $\epsilon>0$, $K\geq1$ and let $\bar{x}_{K}$
and $\overline{\varLambda}_{K}$ be produced by Algorithm \ref{alg:The-Inexact-Primal-Dual}.
Then,
\end{thm}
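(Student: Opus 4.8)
The plan is to run the standard mirror-descent / primal-dual analysis on the regularized saddle-point problem $\mathbb{SP}_{\kappa}$, carefully tracking the inexactness error $\varepsilon_k$ and the regularization gap, and then convert the resulting bound on $L_\kappa(\bar x_K,\cdot)$ into a bound on the optimality gap $f(\bar x_K)-\mathrm{val}(\mathbb{P})$ and on the constraint violation $\sup_{\xi\in\Xi} g(\bar x_K,\xi)_+$. First I would establish the one-step (descent) inequalities. For the primal update (\ref{inexactprimalupdate}), the optimality condition for the strongly convex subproblem gives, for every $x\in\mathcal{X}$,
\[
\gamma\langle x_k-x,\,\nabla f(x_k)+\mathcal{G}_k(x_k,\varLambda_k)\rangle\le\tfrac12\|x-x_k\|^2-\tfrac12\|x-x_{k+1}\|^2-\tfrac12\|x_{k+1}-x_k\|^2,
\]
and after substituting $\mathcal{G}_k=\langle\varLambda_k,\nabla G(x_k)\rangle+\varepsilon_k$, using convexity of $L_\kappa(\cdot,\varLambda_k)$ in $x$, and absorbing the cross term via Young's inequality (the $-\tfrac12\|x_{k+1}-x_k\|^2$ term pays for $\tfrac{\gamma^2}{2}\|\nabla f(x_k)+\langle\varLambda_k,\nabla G(x_k)\rangle\|^2$, which is bounded using the Lipschitz constants $L_f$, $L_{g,\mathcal{X}}$ and the dual bound $\bar\rho(\theta)$), one obtains a telescoping inequality for $\gamma(L_\kappa(x_k,\varLambda_k)-L_\kappa(x,\varLambda_k))$ with an extra $\gamma\langle x_k-x_{k+1},\varepsilon_k\rangle$ term. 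For the dual update (\ref{dualupdate})–(\ref{dualupdateexplicit}), the three-point identity for the Bregman distance $B$ (exact because the dual update is solved exactly) gives, for every $\varLambda\in\mathcal{V}$,
\[
\gamma\big(\langle\varLambda-\varLambda_k,-G(x_k)\rangle+\kappa B(\varLambda,\varLambda_u)\big)+B(\varLambda,\varLambda_{k+1})+B(\varLambda_{k+1},\varLambda_k)\le B(\varLambda,\varLambda_k)+\gamma\big(\langle\varLambda_{k+1}-\varLambda_k,-G(x_k)\rangle+\kappa B(\varLambda_{k+1},\varLambda_u)\big)
\]
up to the appropriate rearrangement; combined with the generalized Pinsker inequality of Theorem \ref{propertiesofB}(v), $B(\varLambda_{k+1},\varLambda_k)\ge\frac{1}{2\bar\rho(\theta)}\|\varLambda_{k+1}-\varLambda_k\|_{TV}^2$, the $-B(\varLambda_{k+1},\varLambda_k)$ term absorbs $\frac{\gamma^2\bar\rho(\theta)}{2}\|G(x_k)\|_{\mathcal{C}(\Xi)}^2\le\frac{\gamma^2\bar\rho(\theta)}{2}G_{\max}^2$ (plus the $\kappa$-regularization cross term, which is handled by $\kappa\le1$ and the bound on $B(\cdot,\varLambda_u)$ on $\mathcal{V}$, giving the constant $C(\epsilon,\theta)$), yielding a telescoping inequality for $\gamma(L_\kappa(x_k,\varLambda)-L_\kappa(x_k,\varLambda_k))$.

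Next I would add the two chains, sum over $k=0,\dots,K-1$, and telescope. The $B(\cdot,\varLambda_k)$ and $\|x-x_k\|^2$ terms collapse to $B(\varLambda,\varLambda_0)+\tfrac12\|x-x_0\|^2\le C(\epsilon,\theta)+D_{\mathcal{X}}$ (using $\varLambda_0=\varLambda_u$ so $B(\varLambda,\varLambda_0)\le\sup_{\varLambda\in\mathcal{V}}B(\varLambda,\varLambda_u)\le C(\epsilon,\theta)$, and $\tfrac12\|x-x_0\|^2\le D_{\mathcal{X}}$, perhaps after adjusting the definition of $D_{\mathcal{X}}$ to be $\tfrac12 D_\mathcal{X}^2$ — I would check the paper's exact convention). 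Dividing by $\gamma K$ and using convexity of $L_\kappa$ (so that $L_\kappa(\bar x_K,\varLambda)\le\frac1K\sum_k L_\kappa(x_k,\varLambda)$ and $-L_\kappa(\bar x_K,\bar\varLambda_K)\le-\frac1K\sum_k(-L_\kappa(x_k,\varLambda_k))$ — here I'd use Jensen in the $x$ argument and concavity in the $\varLambda$ argument carefully, noting $\bar\varLambda_K\in\mathcal{V}$ by convexity of $\mathcal{V}$) gives, for all $(x,\varLambda)\in\mathcal{X}\times\mathcal{V}$,
\[
L_\kappa(\bar x_K,\varLambda)-L_\kappa(x,\bar\varLambda_K)\le\frac{C(\epsilon,\theta)+D_{\mathcal{X}}}{\gamma K}+\frac{\gamma}{2}\big(\bar\rho(\theta)G_{\max}^2+2(L_f+\bar\rho(\theta)L_{g,\mathcal{X}})^2\big)+\frac1K\sum_{k=0}^{K-1}\langle x_k-x_{k+1},\varepsilon_k\rangle.
\]
Plugging in the prescribed $\gamma$ makes the first two terms equal and their sum becomes $\mu(\epsilon,\theta)/\sqrt{K}$, and the error term is bounded by $\frac{1}{K}\sum_k\|x_k-x_{k+1}\|\,\|\varepsilon_k\|\le\frac{D_{\mathcal{X}}}{K}\sum_k\|\varepsilon_k\|$ (or, more sharply, split via Young's inequality against the prox terms — I'd pick whichever matches the "suitable conditions on the errors" the theorem statement will impose, likely a bound on $\sum_k\|\varepsilon_k\|$ or on $\frac1K\sum_k\|\varepsilon_k\|$).

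Finally I would convert this saddle-point gap into the two advertised bounds. For the optimality gap: choosing $x=x_\kappa^*$ (or $x=x^*$) and using $L_\kappa(x,\bar\varLambda_K)\le L(x,\bar\varLambda_K)\le \sup_{\varLambda\in\mathcal{V}}L(x,\varLambda)$, together with weak duality and the approximation Theorem \ref{maxsupdiff_less than_epsilon} (which controls $\max_\varLambda\langle\varLambda,G(x)\rangle$ versus its $\kappa$-regularized counterpart up to $\epsilon$), bounds $f(\bar x_K)-\mathrm{val}(\mathbb{P})$ from above by $\mu(\epsilon,\theta)/\sqrt K+\epsilon+(\text{error term})$; a matching lower bound $f(\bar x_K)-\mathrm{val}(\mathbb{P})\ge -\bar\rho(\theta)\sup_\xi g(\bar x_K,\xi)_+$ comes from feasibility of $x^*$ and the definition of the Lagrangian. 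For the constraint violation: I would take $\varLambda$ in the left side to be a suitable multiple of a measure concentrated near the argmax of $g(\bar x_K,\cdot)$ — but since point measures are excluded from $\mathcal{V}$, I'd instead use the explicit form of $\varLambda_\kappa(\bar x_K)$ or a near-uniform perturbation and exploit the full-dimensionality ratio $r$, the Lipschitz constant $L_{g,\Xi}$, and the radius $R_\Xi$ (exactly the ingredients bundled into $\bar C(\theta)$) to lower-bound $\langle\varLambda,G(\bar x_K)\rangle-\kappa B(\varLambda,\varLambda_u)$ in terms of $\bar\rho(\theta)\sup_\xi g(\bar x_K,\xi)_+$ minus $O(\epsilon)$. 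The main obstacle I anticipate is this last step: handling the constraint-violation bound without access to point measures, i.e., certifying that the bounded, absolutely-continuous dual feasible set $\mathcal{V}$ is still "rich enough" to detect a violated constraint, which is precisely why the machinery of Section \ref{sec:The-Regularized-Saddle-Point} (the prox function $B$, the uniform reference measure $\varLambda_u$, and the constant $\bar C(\theta)$) was set up — so I expect the proof to invoke a lemma from Section \ref{sec:Proofs-of-Main Results} that quantifies exactly this, and the bookkeeping of combining that $\epsilon$-loss with the $\mu(\epsilon,\theta)/\sqrt K$ rate and the cumulative error $\sum_k\|\varepsilon_k\|$ will be the delicate part.
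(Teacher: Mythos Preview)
Your plan is essentially the paper's route: one-step primal and dual inequalities (the paper's Lemmas~\ref{mu-mut} and~\ref{xt-x}), telescope to the saddle-gap bound (Lemma~\ref{twosides}), then specialize $(x,\varLambda)$. Two places where your sketch diverges from what the paper actually does.

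First, the error term: after substituting $\mathcal{G}_k=\langle\varLambda_k,\nabla G(x_k)\rangle+\varepsilon_k$ into the primal optimality condition, the residual is $\gamma\langle x-x_k,\varepsilon_k\rangle$, \emph{not} $\gamma\langle x_k-x_{k+1},\varepsilon_k\rangle$ as you wrote; it involves the comparison point $x$, not the next iterate. The paper handles this by running an auxiliary ``ghost'' sequence $\tilde x_{k+1}=\arg\min_{x\in\mathcal X}\bigl[\gamma\langle x-\tilde x_k,-\varepsilon_k\rangle+\tfrac12\|x-\tilde x_k\|^2\bigr]$ with $\tilde x_0=x_0$, so that the part $\langle\tilde x_k-x,-\varepsilon_k\rangle$ telescopes and the remainder $\langle x_k-\tilde x_k,\varepsilon_k\rangle$ is bounded by $\sqrt{D_{\mathcal X}}\,\|\varepsilon_k\|$. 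This is precisely the origin of the $\sqrt{D_{\mathcal X}}\,\tfrac1K\sum_k\|\varepsilon_k\|+\tfrac{3\gamma}{2K}\sum_k\|\varepsilon_k\|^2$ terms in the statement. (A direct Cauchy--Schwarz bound $|\langle x-x_k,\varepsilon_k\rangle|\le\sqrt{D_{\mathcal X}}\|\varepsilon_k\|$ would also close the argument, with slightly better constants; the ghost sequence is the device the paper chooses.)

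Second, the test points are more concrete than you indicate. For the optimality gap the paper takes $x=x^*$ and $\varLambda=0$: then $L_\kappa(\bar x_K,0)=f(\bar x_K)-\kappa\,B(0,\varLambda_u)=f(\bar x_K)-\kappa\rho_0$ and $L_\kappa(x^*,\bar\varLambda_K)\le f(x^*)$ since $G(x^*)\le0$ and $B\ge0$, giving the bound directly once $\bar\kappa(\epsilon)\rho_0\le\epsilon$. For the constraint violation the paper takes $x=x^*_{\bar\kappa(\epsilon)}$ and $\varLambda=\varLambda_{\bar\kappa(\epsilon)}(\bar x_K)$ (the regularized inner maximizer at $\bar x_K$, exactly as you guessed); Lemma~\ref{LemmaQB} bounds $B(\varLambda_{\bar\kappa(\epsilon)}(\bar x_K),\varLambda_u)\le C'(\epsilon,\theta)$ so that the right-hand side of Lemma~\ref{twosides} stays controlled, and the left side $L_{\bar\kappa}(\bar x_K,\varLambda_{\bar\kappa}(\bar x_K))-L_{\bar\kappa}(x^*_{\bar\kappa},\bar\varLambda_K)$ is lower-bounded via Theorem~\ref{maxsupdiff_less than_epsilon} together with two applications of the saddle-point inequalities for $(x^*_{\bar\kappa(\epsilon)},\varLambda^*_{\bar\kappa(\epsilon)})$, ending with $\max_{\varLambda\in\mathcal V}\langle\varLambda,G(\bar x_K)\rangle\ge\bar\rho(\theta)\,g(\bar x_K,\bar\xi)$ by testing the \emph{unregularized} linear functional against the point measure $\bar\rho(\theta)\delta_{\bar\xi}\in\mathcal V$. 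So your worry about point measures is resolved not by approximating $\delta_{\bar\xi}$ inside the $B$-regularized problem, but by first passing (via Theorem~\ref{maxsupdiff_less than_epsilon}) to the unregularized $\max_{\varLambda\in\mathcal V}\langle\varLambda,G(\bar x_K)\rangle$, where point measures are perfectly admissible.
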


\[
f(\overline{x}_{K})-f(x^{\ast})\leq\frac{\mu(\epsilon,\theta)}{\sqrt{K}}+\frac{\sqrt{D_{\mathcal{X}}}}{K}\sum_{k=0}^{K-1}\left\Vert \varepsilon_{k}\right\Vert +\frac{3\,\gamma}{2\,K}\sum_{k=0}^{K-1}\left\Vert \varepsilon_{k}\right\Vert ^{2}+\epsilon,
\]

\[
g(\overline{x}_{K},\,\xi)\leq\frac{\mu(\epsilon,\theta)}{\bar{\rho}(\theta)\sqrt{K}}+\frac{\sqrt{D_{\mathcal{X}}}}{\bar{\rho}(\theta)\,K}\sum_{k=0}^{K-1}\left\Vert \varepsilon_{k}\right\Vert +\frac{3\,\gamma}{2\,\bar{\rho}(\theta)\,K}\sum_{k=0}^{K-1}\left\Vert \varepsilon_{k}\right\Vert ^{2}+\frac{\epsilon}{\bar{\rho}(\theta)},\quad\forall\xi\in\Xi.
\]

\begin{proof}
By fixing $x=x^{\ast}$ and $\varLambda=0$ in (\ref{violation})
in Lemma \ref{twosides} (which bounds the difference $L_{\bar{\kappa}}(\overline{x}_{K},\,\varLambda)-L_{\bar{\kappa}}(x,\,\overline{\varLambda}_{K})$
for all $x\in\mathcal{X}$ and $\varLambda\in\mathcal{V}$ with a
continuous density), and noting that $B(0,\,\varLambda_{u})=\rho_{0}\leq\max(\rho_{0},\,C'(\epsilon,\,\theta))=C(\epsilon,\,\theta)$,
$\bar{\kappa}(\epsilon)\,B\left(\overline{\varLambda}_{K},\,\varLambda_{u}\right)\geq0$,
and $G(x^{\ast})\leq0$, we have

\begin{equation}
\begin{array}{rcl}
f(\overline{x}_{K})-f(x^{\ast}) & \leq & \frac{1}{\gamma K}(C(\epsilon,\,\theta)+D_{\mathcal{X}})+\frac{\gamma}{2}(\bar{\rho}(\theta)G_{\max}^{2}+2(L_{f}+\bar{\rho}(\theta)L_{g,\,\mathcal{X}})^{2})\\
 &  & +\frac{3}{2}\gamma\,\frac{1}{K}\sum_{k=0}^{K-1}\left\Vert \varepsilon_{k}\right\Vert ^{2}+\sqrt{D_{\mathcal{X}}}\,\frac{1}{K}\sum_{k=0}^{K-1}\left\Vert \varepsilon_{k}\right\Vert +\bar{\kappa}(\epsilon)\rho_{0}\\
 & \leq & \frac{\mu(\epsilon,\theta)}{\sqrt{K}}+\epsilon+\frac{3}{2}\gamma\,\frac{1}{K}\sum_{k=0}^{K-1}\left\Vert \varepsilon_{k}\right\Vert ^{2}+\sqrt{D_{\mathcal{X}}}\,\frac{1}{K}\sum_{k=0}^{K-1}\left\Vert \varepsilon_{k}\right\Vert ,
\end{array}\label{optimalitygap}
\end{equation}
where the second inequality follows from the choice of $\gamma$ in
Algorithm \ref{alg:The-Inexact-Primal-Dual} and $\bar{\kappa}(\epsilon)\leq\epsilon/\rho_{0}$
by definition of $\bar{\kappa}(\epsilon)$.

To bound the constraint violation, we fix $x=x_{\bar{\kappa}(\epsilon)}^{\ast}$
and $\varLambda=\varLambda_{\bar{\kappa}(\epsilon)}(\overline{x}_{K})$
 in (\ref{violation}). By Lemma \ref{LemmaQB} (which upper bounds
$B(\varLambda_{\kappa}(x),\,\varLambda_{u})$ for all $x\in\mathcal{X}$),
we have 
\[
B(\varLambda_{\bar{\kappa}(\epsilon)}(\overline{x}_{K}),\,\varLambda_{u})\leq C'(\epsilon,\,\theta)\leq\max(\rho_{0},\,C'(\epsilon,\,\theta))=C(\epsilon,\,\theta).
\]
Therefore, we have,

\begin{equation}
\begin{array}{rcl}
 &  & L_{\bar{\kappa}(\epsilon)}(\overline{x}_{K},\,\varLambda_{\bar{\kappa}(\epsilon)}(\overline{x}_{K}))-L_{\bar{\kappa}(\epsilon)}(x_{\bar{\kappa}(\epsilon)}^{\ast},\,\overline{\varLambda}_{K})\\
 & \leq & \frac{1}{\gamma K}C(\epsilon,\,\theta)+\frac{1}{\gamma K}D_{\mathcal{X}}+\frac{\gamma}{2}(\bar{\rho}(\theta)G_{\max}^{2}+2(L_{f}+\bar{\rho}(\theta)L_{g,\,\mathcal{X}})^{2})+\frac{3}{2}\gamma\,\frac{1}{K}\sum_{k=0}^{K-1}\left\Vert \varepsilon_{k}\right\Vert ^{2}+\sqrt{D_{\mathcal{X}}}\,\frac{1}{K}\sum_{k=0}^{K-1}\left\Vert \varepsilon_{k}\right\Vert \\
 & = & \frac{\mu(\epsilon,\theta)}{\sqrt{K}}+\frac{3}{2}\gamma\,\frac{1}{K}\sum_{k=0}^{K-1}\left\Vert \varepsilon_{k}\right\Vert ^{2}+\sqrt{D_{\mathcal{X}}}\,\frac{1}{K}\sum_{k=0}^{K-1}\left\Vert \varepsilon_{k}\right\Vert .
\end{array}\label{violationofconstr1}
\end{equation}
On the other hand, 

\begin{equation}
\begin{array}{rcl}
 &  & L_{\bar{\kappa}(\epsilon)}(\overline{x}_{K},\,\varLambda_{\bar{\kappa}(\epsilon)}(\overline{x}_{K}))-L_{\bar{\kappa}(\epsilon)}(x_{\bar{\kappa}(\epsilon)}^{\ast},\,\overline{\varLambda}_{K})\\
 & \geq & f(\overline{x}_{K})+\max_{\varLambda\in\mathcal{V}}\left\langle \varLambda,\,G(\overline{x}_{K})\right\rangle -\epsilon-f(x_{\bar{\kappa}(\epsilon)}^{\ast})-\left\langle \overline{\varLambda}_{K},\,G(x_{\bar{\kappa}(\epsilon)}^{\ast})\right\rangle +\bar{\kappa}(\epsilon)\,B\left(\overline{\varLambda}_{K},\,\varLambda_{u}\right)\\
 & \geq & \max_{\varLambda\in\mathcal{V}}\left\langle \varLambda,\,G(\overline{x}_{K})\right\rangle -\epsilon+\left\langle \varLambda_{\bar{\kappa}(\epsilon)}^{\ast},\,G(x_{\bar{\kappa}(\epsilon)}^{\ast})\right\rangle -\left\langle \varLambda_{\bar{\kappa}(\epsilon)}^{\ast},\,G(\overline{x}_{K})\right\rangle -\left\langle \overline{\varLambda}_{K},\,G(x_{\bar{\kappa}(\epsilon)}^{\ast})\right\rangle +\bar{\kappa}(\epsilon)\,B\left(\overline{\varLambda}_{K},\,\varLambda_{u}\right)\\
 & \geq & \max_{\varLambda\in\mathcal{V}}\left\langle \varLambda,\,G(\overline{x}_{K})\right\rangle -\epsilon+\bar{\kappa}(\epsilon)\,B\left(\varLambda_{\bar{\kappa}(\epsilon)}^{\ast},\,\varLambda_{u}\right)\\
 & \geq & \bar{\rho}(\theta)g(\overline{x}_{K},\,\bar{\xi})-\epsilon,
\end{array}\label{voilationofconstr2}
\end{equation}
where the first inequality follows from Theorem \ref{maxsupdiff_less than_epsilon}
(which shows that the gap between the values of the inner maximization
over $\mathcal{V}$ of the regularized Lagrangian and the original
Lagrangian can be made arbitrarily small through our control of $\epsilon$,
i.e., $\max_{\varLambda\in\mathcal{V}}\left\{ \left\langle \varLambda,\,G(x)\right\rangle -\bar{\kappa}(\epsilon)\,B\left(\varLambda,\,\varLambda_{u}\right)\right\} \geq\max_{\varLambda\in\mathcal{V}}\left\{ \left\langle \varLambda,\,G(x)\right\rangle \right\} -\epsilon$
for any $x\in\mathcal{X}$), the second inequality follows from $f(x_{\bar{\kappa}(\epsilon)}^{\ast})+\left\langle \varLambda_{\bar{\kappa}(\epsilon)}^{\ast},\,G(x_{\bar{\kappa}(\epsilon)}^{\ast})\right\rangle \leq f(\overline{x}_{K})+\left\langle \varLambda_{\bar{\kappa}(\epsilon)}^{\ast},\,G(\overline{x}_{K})\right\rangle $
due to the definition of a saddle-point, the third is from the inequality
\[
\left\langle \varLambda_{\bar{\kappa}(\epsilon)}^{\ast},\,G(x_{\bar{\kappa}(\epsilon)}^{\ast})\right\rangle -\bar{\kappa}(\epsilon)\,B\left(\varLambda_{\bar{\kappa}(\epsilon)}^{\ast},\,\varLambda_{u}\right)\geq\left\langle \overline{\varLambda}_{K},\,G(x_{\bar{\kappa}(\epsilon)}^{\ast})\right\rangle -\bar{\kappa}(\epsilon)\,B\left(\overline{\varLambda}_{K},\,\varLambda_{u}\right),
\]
also due to the definition of a saddle-point, and the last inequality
is due to the specific selection $\varLambda=\bar{\rho}(\theta)\delta_{\bar{\xi}}\in\mathcal{V}$,
where $\bar{\xi}\in\Xi$ is arbitrary and $\bar{\kappa}(\epsilon)\,B\left(\varLambda_{\bar{\kappa}(\epsilon)}^{\ast},\,\varLambda_{u}\right)\geq0$.

Finally, from (\ref{violationofconstr1}) and (\ref{voilationofconstr2}),
we get

\begin{equation}
g(\overline{x}_{K},\,\bar{\xi})\leq\frac{\mu(\epsilon,\theta)}{\bar{\rho}(\theta)\sqrt{K}}+\frac{\epsilon}{\bar{\rho}(\theta)}+\frac{3\gamma}{2\bar{\rho}(\theta)}\,\frac{1}{K}\sum_{k=0}^{K-1}\left\Vert \varepsilon_{k}\right\Vert ^{2}+\frac{\sqrt{D_{\mathcal{X}}}}{\bar{\rho}(\theta)}\,\frac{1}{K}\sum_{k=0}^{K-1}\left\Vert \varepsilon_{k}\right\Vert ,\,\forall\bar{\xi}\in\Xi,\label{violationconstraint-barx_T}
\end{equation}
which completes the proof.
\end{proof}
\begin{rem}
(i) There is a tradeoff in these bounds in $\epsilon$ and $\theta$,
since $\mu(\epsilon,\theta)$ is decreasing in $\epsilon$ and increasing
in $\theta$.

(ii) Suppose $\|\varepsilon_{k}\|\leq\varepsilon$ for all $k=0,\,1,\ldots,\,K-1$,
then our algorithm achieves an $\mathcal{O}(1/\sqrt{K})$ rate of
convergence with overall error that depends on $\varepsilon$.

(iii) Suppose $\|\varepsilon_{k}\|\leq1/\left(k+1\right)$ for all
$k=0,\,1,\ldots,\,K-1$, then we have $\frac{1}{K}\sum_{k=0}^{K-1}\left\Vert \varepsilon_{k}\right\Vert ^{2}\leq\pi^{2}/\left(6\,K\right)$
since $\sum_{k=1}^{\infty}1/k^{2}=\pi^{2}/6$ and $\frac{1}{K}\sum_{k=0}^{K-1}\left\Vert \varepsilon_{k}\right\Vert \leq\left(\ln K+1\right)/K$,
due to the fact that $\sum_{k=1}^{K}1/k\leq1+\intop_{1}^{K}x^{-1}dx=\ln K+1$.
\end{rem}

\section{\label{sec:MC_integration} Approximation with Monte Carlo Integration}

In this section, we propose a specific scheme for approximating $\left\langle \varLambda_{k},\,\nabla G\left(x_{k}\right)\right\rangle $
based on uniformly sampling from $\Xi$. We begin by noting that
\[
\left\langle \varLambda_{k},\,\nabla G\left(x_{k}\right)\right\rangle =\int_{\Xi}\varLambda_{k}(\xi)\nabla_{x}g\left(x_{k},\,\xi\right)d\xi,
\]
which suggests approximation by Monte Carlo integration. 

Initially, we generate $N\geq1$ i.i.d. samples $\xi_{i}$ from $\Xi$,
$i=1,\,2,\ldots,\,N$, from the uniform distribution. Let $\hat{\varLambda}_{0}(\xi_{i})=\rho_{0}/\text{vol}\left(\Xi\right)$,
where $i=1,\,2,\ldots,\,N$. For $0\leq k\leq K-2$, $1\leq i\leq N$,
we then approximate $\varLambda_{k+1}(\xi_{i})$ by 

\begin{equation}
\begin{array}{rl}
\hat{\varLambda}_{k+1}(\xi_{i})=\, & \min\left\{ \bar{\rho}(\theta)/\left(\left(\rho_{0}/\text{vol}\left(\Xi\right)\right)^{\frac{\gamma\,\kappa}{1+\gamma\,\kappa}}\frac{\text{vol}\left(\Xi\right)}{N}\sum_{i=1}^{N}\exp\left(\frac{\gamma\,g(x_{k},\,\xi_{i})}{1+\gamma\,\kappa}\right)\hat{\varLambda}_{k}(\xi_{i})^{\frac{1}{1+\gamma\,\kappa}}\right),1\right\} \\
 & \times\left(\rho_{0}/\text{vol}\left(\Xi\right)\right)^{\frac{\gamma\,\kappa}{1+\gamma\,\kappa}}\exp\left(\frac{\gamma\,g(x_{k},\,\xi_{i})}{1+\gamma\,\kappa}\right)\hat{\varLambda}_{k}(\xi_{i})^{\frac{1}{1+\gamma\,\kappa}}.
\end{array}\label{MCintegration_for_dualupdate}
\end{equation}
Clearly, the $\hat{\varLambda}_{k}$ are all approximations of $\varLambda_{k}$,
and all $\hat{\varLambda}_{k}$ share the same support $\left\{ \xi_{1},\xi_{2},\ldots,\xi_{N}\right\} $.
Then, we approximate $\left\langle \varLambda_{k},\,\nabla G\left(x_{k}\right)\right\rangle $
with

\[
\mathcal{G}_{k}(x_{k},\varLambda_{k})=\frac{\text{vol}\left(\Xi\right)}{N}\sum_{n=1}^{N}\hat{\varLambda}_{k}(\xi_{i})\nabla_{x}g\left(x_{k},\,\xi_{i}\right).
\]
The modified primal update is

\begin{equation}
x_{k+1}=\arg\min_{x\in\mathcal{X}}\left[\gamma\langle x-x_{k},\,\nabla f(x_{k})+\frac{\text{vol}\left(\Xi\right)}{N}\sum_{n=1}^{N}\hat{\varLambda}_{k}(\xi_{i})\nabla_{x}g\left(x_{k},\,\xi_{i}\right)\rangle+\frac{1}{2}\|x-x_{k}\|^{2}\right].\label{stochasticprimalupdate}
\end{equation}

\begin{algorithm}
\caption{\label{alg:The-Randomized-Primal-Dual}The Primal-Dual Algorithm with
Monte Carlo Integration}

\textbf{\textcolor{black}{Input: }}Number of iterations $K\geq1$,
number of samples $N\geq1$, initial points $x_{0}\in\mathcal{X},$
and constant step-size 

\[
\gamma=\sqrt{\frac{2(C(\epsilon,\,\theta)+D_{\mathcal{X}})}{\left(\bar{\rho}(\theta)G_{\max}^{2}+2(L_{f}+\bar{\rho}(\theta)L_{g,\,\mathcal{X}})^{2}\right)K}}.
\]

Generate $N\geq1$ i.i.d. samples $\xi_{i}$ from $\Xi$ from the
uniform distribution, and $\hat{\varLambda}_{0}(\xi_{i})=\rho_{0}/\text{vol}\left(\Xi\right)$,
where $i=1,\,2,\ldots,\,N$. 

\textbf{\textcolor{black}{For}} $k=0,1,2,\ldots,K-2$ \textbf{\textcolor{black}{do}} 

$\qquad$Obtain $x_{k+1}$ using (\ref{stochasticprimalupdate}), 

$\qquad$Obtain $\hat{\varLambda}_{k+1}$ using (\ref{MCintegration_for_dualupdate}).

\textbf{\textcolor{black}{end for}} 

Set $\bar{x}_{K}=\frac{1}{K}\sum_{k=0}^{K-1}x_{k}$. 

\textbf{\textcolor{black}{Return:}} $\overline{x}_{K}$.
\end{algorithm}

Denote $m_{u}\triangleq\rho_{0}/\text{vol}\left(\Xi\right)$, $\lambda_{k}\triangleq\varLambda_{k}/m_{u}$,
and $\hat{\lambda}_{k}\triangleq\hat{\varLambda}_{k}/m_{u}$, $\rho\triangleq\bar{\rho}(\theta)/m_{u}$,
$s_{k}(\xi)\triangleq\exp\left(\frac{\gamma\,g(x_{k},\,\xi)}{1+\gamma\,\kappa}\right)$,
$l\triangleq\frac{1}{1+\gamma\,\kappa}$. From (\ref{dualupdateexplicit})
and (\ref{MCintegration_for_dualupdate}), we have 

\[
\lambda_{k+1}(\xi)=\min\left\{ \rho/\left(\intop_{\Xi}s_{k}(\xi)\lambda_{k}(\xi)^{l}d\xi\right),1\right\} s_{k}(\xi)\lambda_{k}(\xi)^{l},
\]

\[
\hat{\lambda}_{k+1}(\xi_{i})=\min\left\{ \rho/\left(\frac{\text{vol}\left(\Xi\right)}{N}\sum_{i=1}^{N}s_{k}(\xi_{i})\hat{\lambda}_{k}(\xi_{i})^{l}\right),1\right\} s_{k}(\xi_{i})\hat{\lambda}_{k}(\xi_{i})^{l}.
\]

Clearly, $\lambda_{0}(\xi)=1$ for any $\xi\in\Xi$, and $\hat{\lambda}_{0}(\xi_{i})=1$
for all $i=1,\,2,\ldots,\,N$. For further analysis, we need an upper
bound on $\lambda_{k}(\xi)$ for $k=0,1,\ldots,K-1$ and $\xi\in\Xi$.
\begin{assumption}
\label{upperbound_M}There exists $M>1$, such that $\lambda_{k}(\xi)\leq M$
for $k=0,1,\ldots,K-1$ and $\xi\in\Xi$.
\end{assumption}

\begin{rem}
Since $g(x,\,\xi)\leq G_{\max}$ for all $x\in\mathcal{X}$ and $\xi\in\Xi$,
we have $s_{k}(\xi)\leq\exp\left(\frac{\gamma\,G_{\max}}{1+\gamma\,\kappa}\right)$
for all $k=0,1,\ldots,K-1$ and $\xi\in\Xi$. We can further deduce
that for any $k\geq1$ and $\xi\in\Xi$, 

\[
\lambda_{k}(\xi)\leq\exp\left(\frac{\gamma\,G_{\max}}{1+\gamma\,\kappa}\sum_{j=0}^{k-1}l^{j}\right)\leq\exp\left(\frac{\gamma\,G_{\max}}{(1+\gamma\,\kappa)(1-l)}\right)=\exp\left(\frac{G_{\max}}{\kappa}\right).
\]
Therefore, we can take $M=\exp\left(\frac{G_{\max}}{\kappa}\right)$
to satisfy Assumption \ref{upperbound_M}. 
\end{rem}

Define $R(r)=(1+\beta)(\frac{1+r}{1-r})^{l}-1$ where $\beta>0$ and
$r\in(0,1)$, and denote $R^{k}(r)$ as the $k^{th}$ iterate of $R$
where $k$ is a non-negative integer. Lemma \ref{increasingproperty}
claims that if $l\in[\frac{1}{2},1)$, for any $\varepsilon>0$ and
positive integer $K$, there exists $\eta(\varepsilon,K)\in(0,\varepsilon)$,
such that as long as $\beta\leq\eta(\varepsilon,K)$, we have $R^{k}(\beta)\leq\varepsilon$
for all $k=1,2,\ldots,K-1$. The next theorem gives error bounds for
our primal-dual algorithm based on Monte Carlo integration. 
\begin{thm}
\label{errorbounds_MonteCarlo}For any $\epsilon>0$, $\varepsilon>0$,
$\delta\in(0,1)$, $K\geq\frac{2(C(\epsilon,\,\theta)+D_{\mathcal{X}})\bar{\kappa}(\epsilon)^{2}}{\bar{\rho}(\theta)G_{\max}^{2}+2(L_{f}+\bar{\rho}(\theta)L_{g,\,\mathcal{X}})^{2}}$,
and sample size

\[
N\geq\max\left\{ \frac{2L_{g,\mathcal{X}}^{2}M^{2}\rho_{0}^{2}}{\varepsilon^{2}}\ln(\frac{4K}{\delta}),\frac{1}{2\eta(\frac{\varepsilon}{2\rho_{0}L_{g,\mathcal{X}}M},K)^{2}}\ln(\frac{4K}{\delta})\right\} ,
\]
we have

\[
f(\overline{x}_{K})-f(x^{\ast})\leq\frac{\mu(\epsilon,\theta)}{\sqrt{K}}+\sqrt{D_{\mathcal{X}}}\varepsilon+\frac{3\,\gamma}{2\,}\varepsilon^{2}+\epsilon,
\]

\[
g(\overline{x}_{K},\,\xi)\leq\frac{\mu(\epsilon,\theta)}{\bar{\rho}(\theta)\sqrt{K}}+\frac{\sqrt{D_{\mathcal{X}}}}{\bar{\rho}(\theta)\,}\varepsilon+\frac{3\,\gamma}{2\,\bar{\rho}(\theta)\,}\varepsilon^{2}+\frac{\epsilon}{\bar{\rho}(\theta)},\quad\forall\xi\in\Xi,
\]
with probability at least $1-\delta$.
\end{thm}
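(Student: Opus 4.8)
The strategy is to invoke the general error bounds in Theorem~\ref{PSMDthm} and then control the approximation errors $\varepsilon_k = \mathcal{G}_k(x_k,\varLambda_k) - \langle \varLambda_k,\,\nabla G(x_k)\rangle$ so that $\|\varepsilon_k\| \le \varepsilon$ holds simultaneously for all $k = 0,1,\ldots,K-1$ with probability at least $1-\delta$. Note first that the lower bound on $K$ assumed here is exactly what forces the step-size term $\frac{\gamma}{2}(\bar\rho(\theta)G_{\max}^2 + 2(L_f+\bar\rho(\theta)L_{g,\mathcal{X}})^2)$ from the proof of Theorem~\ref{PSMDthm} to be dominated by $\mu(\epsilon,\theta)/\sqrt{K} + \epsilon$ (so that $\bar\kappa(\epsilon)\rho_0 \le \epsilon$ can absorb it — one should double-check this against the $\bar\kappa(\epsilon)\le \epsilon/\rho_0$ step). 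Once we have $\|\varepsilon_k\|\le\varepsilon$ for all $k$, plugging into the two displays of Theorem~\ref{PSMDthm} and using $\frac{1}{K}\sum_{k=0}^{K-1}\|\varepsilon_k\| \le \varepsilon$ and $\frac{1}{K}\sum_{k=0}^{K-1}\|\varepsilon_k\|^2 \le \varepsilon^2$ gives the claimed bounds immediately.

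\textbf{Controlling the error at a fixed iteration.} The error $\varepsilon_k$ decomposes into two sources: (i) the Monte Carlo integration error in replacing $\int_\Xi \varLambda_k(\xi)\nabla_x g(x_k,\xi)\,d\xi$ by $\frac{\text{vol}(\Xi)}{N}\sum_{i=1}^N \varLambda_k(\xi_i)\nabla_x g(x_k,\xi_i)$ (i.e., using the \emph{true} $\varLambda_k$ on the sample points), and (ii) the error from using the approximate iterates $\hat\varLambda_k$ rather than $\varLambda_k$ on those sample points. For (i), since $\|\nabla_x g(x_k,\xi)\| \le L_{g,\mathcal{X}}$ and $\varLambda_k(\xi) = m_u\lambda_k(\xi) \le m_u M$ (Assumption~\ref{upperbound_M}), each summand is bounded by $\text{vol}(\Xi)\cdot m_u M L_{g,\mathcal{X}} = \rho_0 M L_{g,\mathcal{X}}$ in norm, so a vector Hoeffding / Bernstein bound gives that this error is $\le \varepsilon/2$ with probability $\ge 1 - \delta/(2K)$ provided $N \gtrsim \frac{L_{g,\mathcal{X}}^2 M^2 \rho_0^2}{\varepsilon^2}\ln(4K/\delta)$ — matching the first term in the $\max$ defining $N$. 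For (ii), we need $|\hat\lambda_k(\xi_i) - \lambda_k(\xi_i)| \le \frac{\varepsilon}{2\rho_0 L_{g,\mathcal{X}} M}$ uniformly over the sample points, which will translate the difference of the sums into a bound of $\varepsilon/2$.

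\textbf{Propagating the recursion error (the hard part).} This is the main obstacle. The approximate update $\hat\lambda_{k+1}$ differs from $\lambda_{k+1}$ for two reasons at each step: the sum $\frac{\text{vol}(\Xi)}{N}\sum_i s_k(\xi_i)\hat\lambda_k(\xi_i)^l$ approximates the integral $\int_\Xi s_k(\xi)\lambda_k(\xi)^l\,d\xi$, introducing a fresh sampling error at level $k$, \emph{and} it is fed the already-perturbed $\hat\lambda_k$ rather than $\lambda_k$. One must show this perturbation does not blow up over $K$ iterations. The idea is to write the relative error $\hat\lambda_k(\xi_i)/\lambda_k(\xi_i) \in [1 - r_k,\, 1 + r_k]$ and derive a recursion $r_{k+1} \le R^{k}(\beta_k)$-type bound, where a single step amplifies the relative error through the exponent $l = 1/(1+\gamma\kappa)$ and the normalization: using the $\min\{\cdot,1\}$ clipping (which is $1$-Lipschitz) and the fact that $x\mapsto x^l$ is Hölder/contractive-in-ratio, one gets $\frac{1+r_{k+1}}{1-r_{k+1}} \le (1+\beta)\left(\frac{1+r_k}{1-r_k}\right)^l$ where $\beta$ absorbs the fresh per-step Monte Carlo error (controlled by another Hoeffding bound, requiring $N \gtrsim \eta^{-2}\ln(4K/\delta)$ with $\eta = \eta(\frac{\varepsilon}{2\rho_0 L_{g,\mathcal{X}}M}, K)$, the second term in the $\max$). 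Since $l < 1$, the map $R(r) = (1+\beta)(\frac{1+r}{1-r})^l - 1$ is a contraction-like iteration, and Lemma~\ref{increasingproperty} guarantees that if $\beta \le \eta(\varepsilon', K)$ then $R^k(\beta) \le \varepsilon'$ for all $k \le K-1$ — this is precisely where we need $l \in [\tfrac12,1)$, i.e., $\gamma\kappa \le 1$, which holds since $\kappa \le 1$ and $\gamma$ is small. Taking a union bound over the $2K$ events (two per iteration: the primal-gradient sampling error and the dual-recursion sampling error), each of probability $\le \delta/(2K)$ — or $\delta/(4K)$ if one is careful about the split implicit in the $4K$ inside the logarithm — yields the overall failure probability $\le \delta$, completing the argument.
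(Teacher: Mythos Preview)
Your proposal is correct and follows essentially the same approach as the paper: decompose $\varepsilon_k$ into the pure Monte Carlo error (controlled by Hoeffding and the first term in the $\max$ for $N$) and the recursive perturbation $\hat\lambda_k/\lambda_k - 1$ (controlled via the iteration $R$ and Lemma~\ref{increasingproperty}, using the second term), then union bound and plug into Theorem~\ref{PSMDthm}. One small correction: the lower bound on $K$ is not there to make the step-size term absorbable into $\mu(\epsilon,\theta)/\sqrt{K}+\epsilon$ (that already holds identically by the choice of $\gamma$); its sole purpose is to force $\gamma \le 1/\bar\kappa(\epsilon)$, hence $\gamma\kappa \le 1$ and $l = 1/(1+\gamma\kappa) \in [\tfrac12,1)$, which is exactly the hypothesis you correctly invoke in your propagation step.
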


\begin{proof}
The approximation error at iteration $k\geq0$ is 

\[
\begin{array}{rcl}
\varepsilon_{k} & = & m_{u}\frac{\text{vol}\left(\Xi\right)}{N}\sum_{i=1}^{N}\hat{\lambda}_{k}(\xi_{i})\nabla_{x}g\left(x_{k},\,\xi_{i}\right)-m_{u}\int_{\Xi}\lambda_{k}(\xi)\nabla_{x}g\left(x_{k},\,\xi\right)d\xi\\
 & = & \frac{\rho_{0}}{N}\sum_{i=1}^{N}\lambda_{k}(\xi_{i})\left(\frac{\hat{\lambda}_{k}(\xi_{i})}{\lambda_{k}(\xi_{i})}-1\right)\nabla_{x}g\left(x_{k},\,\xi_{i}\right)\\
 &  & +\rho_{0}\left(\frac{1}{N}\sum_{i=1}^{N}\lambda_{k}(\xi_{i})\nabla_{x}g\left(x_{k},\,\xi_{i}\right)-\int_{\Xi}\lambda_{k}(\xi)\nabla_{x}g\left(x_{k},\,\xi\right)/\text{vol}\left(\Xi\right)d\xi\right),
\end{array}
\]
thus we have

\begin{equation}
\begin{array}{rcl}
\left\Vert \varepsilon_{k}\right\Vert  & \leq & \rho_{0}L_{g,\mathcal{X}}M_{2}\max_{i=1,\,2,\ldots,\,N}\left|\frac{\hat{\lambda}_{k}(\xi_{i})}{\lambda_{k}(\xi_{i})}-1\right|\\
 &  & +\rho_{0}\left|\frac{1}{N}\sum_{i=1}^{N}\lambda_{k}(\xi_{i})\nabla_{x}g\left(x_{k},\,\xi_{i}\right)-\int_{\Xi}\lambda_{k}(\xi)\nabla_{x}g\left(x_{k},\,\xi\right)/\text{vol}\left(\Xi\right)d\xi\right|.
\end{array}\label{errorinMCinteg}
\end{equation}

From Lemma \ref{sample_complexity_lemma}, we have

\[
\left|\frac{1}{N}\sum_{i=1}^{N}\lambda_{k}(\xi_{i})\nabla_{x}g\left(x_{k},\,\xi_{i}\right)-\int_{\Xi}\lambda_{k}(\xi)\nabla_{x}g\left(x_{k},\,\xi\right)/\text{vol}\left(\Xi\right)d\xi\right|\leq\frac{\varepsilon}{2\rho_{0}},\quad k=0,1,\ldots,K-1,
\]

\[
\max_{i=1,\,2,\ldots,\,N}\left|\frac{\hat{\lambda}_{k}(\xi_{i})}{\lambda_{k}(\xi_{i})}-1\right|\leq\frac{\varepsilon}{2\rho_{0}L_{g,\mathcal{X}}M},\quad k=0,1,\ldots,K-1,
\]
with probability at least $1-\delta$. Together with (\ref{errorinMCinteg}),
we obtain

\[
\left\Vert \varepsilon_{k}\right\Vert \leq\varepsilon\quad k=0,1,\ldots,K-1
\]
with probability at least $1-\delta$, and finally arrive at the conclusion
by a direct application of Theorem \ref{PSMDthm}. 
\end{proof}
\begin{rem}
(i) From the inequality $K\geq\frac{2(C(\epsilon,\,\theta)+D_{\mathcal{X}})\bar{\kappa}(\epsilon)^{2}}{\bar{\rho}(\theta)G_{\max}^{2}+2(L_{f}+\bar{\rho}(\theta)L_{g,\,\mathcal{X}})^{2}}$,
we have the stepsize $\gamma\leq1/\bar{\kappa}(\epsilon)$ and $l\in[\frac{1}{2},1)$,
and so we can invoke Lemma \ref{sample_complexity_lemma}. 

(ii) From the proof of Lemma \ref{sample_complexity_lemma}, we see
that $N\geq\frac{2L_{g,\mathcal{X}}^{2}M^{2}\rho_{0}^{2}}{\varepsilon^{2}}\ln(\frac{4K}{\delta})$
guarantees that for each $k=0,1,\ldots,K-1$, $\left|\frac{1}{N}\sum_{i=1}^{N}\lambda_{k}(\xi_{i})\nabla_{x}g\left(x_{k},\,\xi_{i}\right)-\int_{\Xi}\lambda_{k}(\xi)\nabla_{x}g\left(x_{k},\,\xi\right)/\text{vol}\left(\Xi\right)d\xi\right|\leq\frac{\varepsilon}{2\rho_{0}}$
with probability at least $1-\frac{\delta}{2K}$. Moreover, $N\geq\frac{1}{2\eta((\varepsilon/2\rho_{0}L_{g,\mathcal{X}}M),K)^{2}}\ln(\frac{4K}{\delta})$
guarantees that $\max_{i=1,\,2,\ldots,\,N}\left|\frac{\hat{\lambda}_{k}(\xi_{i})}{\lambda_{k}(\xi_{i})}-1\right|\leq\frac{\varepsilon}{2\rho_{0}L_{g,\mathcal{X}}M}$
for all $k=1,2,\ldots,K-1$ with probability at least $1-\delta/2$. 

(iii) The required number of samples $N$ is independent of the dimension
$d$ of the constraint index set $\Xi$. 
\end{rem}

\section{\label{sec:Proofs-of-Main Results}Proofs of Supporting Results}

We organize the proofs of all supporting results in this section. 
\begin{enumerate}
\item Subsection \ref{subsec:Problem SP and Dual Bound} shows the relation
between a saddle-point of Problem $\mathbb{SP}$ and the solutions
of Problem $\mathbb{P}$ and Problem $\mathbb{D}$, and shows that
we can restrict the dual feasible region to within a bounded set. 
\item In Subsection \ref{subsec:Properties-of B}, we show that $B(\cdot,\,\cdot)$
has all the properties required of a prox function (see \cite[Section 3.2]{duchi2016introductory}). 
\item In Subsection \ref{subsec:Results-used-in Fan}, we give the proof
for Theorem \ref{existenceofSP_R} which establishes existence of
a saddle-point of Problem $\mathbb{SP}_{\kappa}$, and we provide
all supporting results for the proof of Theorem \ref{maxsupdiff_less than_epsilon}
(which demonstrates the gap between the values of the inner maximization
over $\mathcal{V}$ of the regularized Lagrangian and the original
Lagrangian can be made arbitrarily small). 
\item Subsection \ref{subsec:Convex-concave analysis} provides a convex-concave
analysis for the primal-dual iterates for the regularized saddle-point
problem, and gives a bound on the difference $L_{\kappa}(\overline{x}_{K},\,\varLambda)-L_{\kappa}(x,\,\overline{\varLambda}_{K})$,
which paves the way for the proof of Theorem \ref{PSMDthm} for our
inexact primal-dual algorithm. 
\item In Subsection \ref{subsec:Sample-Complexity-ofMC_integration}, we
provide the sample complexity analysis of Monte Carlo integration,
which is the key ingredient in the proof of Theorem \ref{errorbounds_MonteCarlo}
for our primal-dual algorithm based on Monte Carlo integration. 
\end{enumerate}

\subsection{\label{subsec:Problem SP and Dual Bound}The Dual Problem}

In this subsection, we provide the proofs for Theorem \ref{SP} and
Theorem \ref{dualupperbound}. Theorem \ref{SP} connects the solution
of Problem $\mathbb{SP}$ with the primal and dual Problems $\mathbb{P}$
and $\mathbb{D}$.
\begin{proof}[Proof of Theorem \ref{SP}]
 (i) The first part is a direct consequence of \cite[Section 8.3, Corollary 1]{luenberger1997optimization}.

(ii) Let $\left(x^{*},\,\varLambda^{*}\right)\in\mathcal{X}\times\mathcal{M}_{+}\left(\Xi\right)$
be a saddle-point of Problem $\mathbb{SP}$, i.e., 

\[
L(x^{*},\,\varLambda)\leq L(x^{*},\,\varLambda^{*})\leq L(x,\,\varLambda^{*}),\:\forall(x,\,\varLambda)\in\mathcal{X}\times\mathcal{M}_{+}\left(\Xi\right).
\]

We prove that $x^{*}$ is primal optimal. We argue that $x^{*}$ is
feasible for Problem $\mathbb{P}$ as follows. Indeed, if $g\left(x^{*},\,\xi_{0}\right)>0$,
for some $\xi_{0}\in\Xi$, then we can select the measure $\varLambda=\varLambda^{*}+\delta_{\xi_{0}}\in\mathcal{M}_{+}\left(\Xi\right)$
such that $L(x^{*},\,\varLambda)>L(x^{*},\,\varLambda^{*})$ which
contradicts the fact that $L(x^{*},\,\varLambda)\leq L(x^{*},\,\varLambda^{*})$
for all $\varLambda\in\mathcal{M}_{+}\left(\Xi\right)$. If we take
$\varLambda=0$ in $L(x^{*},\,\varLambda)\leq L(x^{*},\,\varLambda^{*})$,
then we see that $\left\langle \varLambda^{*},\,G(x^{*})\right\rangle \geq0$
must hold. Since $\varLambda^{*}\geq0$ and $G(x^{*})\leq0$, must
in fact have $\left\langle \varLambda^{*},\,G(x^{*})\right\rangle =0$.
It follows that

\[
f(x^{*})\leq f(x)+\left\langle \varLambda^{*},\,G(x)\right\rangle ,\;\forall x\in\mathcal{X},
\]
which demonstrates that $x^{*}$ is optimal for Problem $\mathbb{P}$.
Indeed, if $x$ is feasible for Problem $\mathbb{P}$, then $\left\langle \varLambda^{*},\,G(x)\right\rangle \leq0$,
and therefore $f(x^{*})\leq f(x)$.

We now prove that $\varLambda^{*}$ is dual optimal. From the condition
that $L(x^{*},\,\varLambda^{*})\leq L(x,\,\varLambda^{*})$ for all
$x\in\mathcal{X}$, we must have that $d(\varLambda^{*})=L(x^{*},\,\varLambda^{*})$.
Next, we note $d(\varLambda)\leq L(x^{*},\,\varLambda)\leq L(x^{*},\,\varLambda^{*})$
for all $\varLambda\in\mathcal{M}_{+}\left(\Xi\right)$. Thus, $d(\varLambda)\leq d(\varLambda^{*})$
for all $\varLambda\in\mathcal{M}_{+}\left(\Xi\right)$, and so $\varLambda^{*}$
is optimal for Problem $\mathbb{D}$.
\end{proof}
Next, we prove that the set of dual optimal solutions lies within
a bounded set. 
\begin{proof}[Proof of Theorem \ref{dualupperbound}]
 By the definition of the dual functional $d(\cdot)$, for any dual
optimal solution $\varLambda^{\ast}\in\text{sol}\left(\mathbb{D}\right)$,
we have $f(\tilde{x})+\left\langle \varLambda^{*},\,G(\tilde{x})\right\rangle \geq d\left(\varLambda^{*}\right)\geq\mathrm{val}(\mathbb{D})$
(where $\tilde{x}$ satisfies the Slater condition). This inequality
implies that $-\left\langle \varLambda^{*},\,G(\tilde{x})\right\rangle \leq f(\tilde{x})-\mathrm{val}(\mathbb{D})$.
Because $\varLambda^{*}\geq0$ and $\left[G(\tilde{x})\right](\xi)=g(\tilde{x},\xi)<0$
for all $\xi\in\Xi$, it must be that

\[
\left\Vert \text{\ensuremath{\varLambda^{\ast}}}\right\Vert _{TV}\leq\frac{f(\tilde{x})-\mathrm{val}(\mathbb{D})}{\min_{\xi\in\Xi}\left\{ -g(\tilde{x},\xi)\right\} }=\frac{1}{\alpha}(f(\tilde{x})-\mathrm{val}(\mathbb{D})).
\]
\end{proof}

\subsection{\label{subsec:Properties-of B}The Prox Function}

In this subsection we develop the key properties of our new prox function
$B$. For this subsection, we define
\[
\mathcal{M}^{cd}\left(\Xi\right)\triangleq\left\{ \varLambda\in\mathcal{M}_{+}\left(\Xi\right):\,\varLambda\ll\varLambda_{u}\;\text{and}\;\varLambda\;\text{has a continuous density}\right\} .
\]

\paragraph*{Basic properties of $B$}

First we prove the preliminary statements in Lemma \ref{lem:prox_preliminary}.
\begin{proof}[Proof of Lemma \ref{lem:prox_preliminary}]
 (i) This is obvious. 

(ii) From direct calculation, we see $\frac{\partial^{2}H(\rho,\,\rho')}{\partial\rho^{2}}=\frac{1}{\rho}$,
which implies that $H(\rho,\,\rho')$ is convex in $\rho>0$. Further,
$\frac{\partial H(\rho,\,\rho')}{\partial\rho}=\log(\frac{\rho}{\rho'})$,
which implies that $H(\rho,\,\rho')$ is decreasing in $\rho$ when
$\rho\in(0,\,\rho')$ since here $\log(\frac{\rho}{\rho'})<0$, and
it is increasing in $\rho$ when $\rho>\rho'$ since here $\log(\frac{\rho}{\rho'})>0$.
Also, notice that $H(\rho',\,\rho')=0$. Therefore, $H(\rho,\,\rho')\geq0$,
and $H(\rho,\,\rho')=0$ if and only if $\rho=\rho'$.
\end{proof}
Next, we provide the proof of Theorem \ref{propertiesofB}.
\begin{proof}[Proof of Theorem \ref{propertiesofB}]
 (i) We first consider the case that $\varLambda\ll\varGamma$,

\[
\begin{array}{rcl}
B(\varLambda,\,\varGamma) & = & \intop_{\Xi}\log(\frac{\varLambda(\xi)}{\varGamma(\xi)})\varLambda(d\xi)-\left(\intop_{\Xi}\varLambda(d\xi)-\intop_{\Xi}\varGamma(d\xi)\right)\\
 & = & \intop_{\Xi}\left(\log(\frac{\phi(\varLambda)(\xi)}{\phi(\varGamma)(\xi)})+\log(\frac{\rho(\varLambda)}{\rho(\varGamma)})\right)\rho(\varLambda)\phi(\varLambda)(d\xi)-(\rho(\varLambda)-\rho(\varGamma))\\
 & = & \rho(\varLambda)\intop_{\Xi}\log(\frac{\phi(\varLambda)(\xi)}{\phi(\varGamma)(\xi)})\phi(\varLambda)(d\xi)+\rho(\varLambda)\log(\frac{\rho(\varLambda)}{\rho(\varGamma)})-\rho(\varLambda)+\rho(\varGamma)\\
 & = & \rho(\varLambda)\,D(\phi(\varLambda),\,\phi(\varGamma))+H(\rho(\varLambda),\,\rho(\varGamma)),
\end{array}
\]
where the second equality follows from 

\[
\log(\frac{\varLambda(\xi)}{\varGamma(\xi)})=\log(\frac{\phi(\varLambda)(\xi)}{\phi(\varGamma)(\xi)})+\log(\frac{\rho(\varLambda)}{\rho(\varGamma)}),
\]
and third equality holds because $\intop_{\Xi}\phi(\varLambda)(d\xi)=1$. 

If $\varLambda\not\ll\varGamma$, then $\phi(\varLambda)\not\ll\phi(\varGamma)$,
and thus we have $B(\varLambda,\,\varGamma)=\rho(\varLambda)\,D(\phi(\varLambda),\,\phi(\varGamma))+H(\rho(\varLambda),\,\rho(\varGamma))=+\infty$.

(ii) First, the KL divergence for any two probability distributions
is non-negative and it equals zero if and only if the two probability
distributions are the same (see \cite{bishop2006pattern}). Second,
$H(\rho(\varLambda),\,\rho(\varGamma))\geq0$, and it equals zero
if and only if $\rho(\varLambda)=\rho(\varGamma)$ from Lemma \ref{lem:prox_preliminary}.
Based on these two observations, we may invoke Theorem \ref{propertiesofB}(i)
and arrive at the conclusion.

(iii) If $\phi,\varphi\in\mathcal{P}\left(\Xi\right)$ are both densities,
then 

\[
B(\phi,\,\varphi)=\begin{cases}
\intop_{\Xi}\log(\frac{\phi(\xi)}{\varphi(\xi)})\phi(d\xi), & \phi\ll\varphi\text{ and }\intop_{\Xi}\left|\log(\frac{\phi(\xi)}{\varphi(\xi)})\right|\phi(d\xi)<\infty,\\
+\infty, & \text{otherwise,}
\end{cases}
\]
recovers the Kullback-Leibler divergence $D(\phi,\,\varphi)$ (see
\cite{bishop2006pattern}).

(iv) For $\mu\in[0,\,1]$, $\varLambda_{1}\ll\varGamma$ and $\varLambda_{2}\ll\varGamma$,
we have

\[
\begin{array}{rcl}
 &  & B(\mu\varLambda_{1}+(1-\mu)\varLambda_{2},\,\varGamma)\\
 & = & \intop_{\Xi}\log(\frac{\mu\varLambda_{1}(\xi)+(1-\mu)\varLambda_{2}(\xi)}{\varGamma(\xi)})(\mu\varLambda_{1}+(1-\mu)\varLambda_{2})(d\xi)-\left(\intop_{\Xi}(\mu\varLambda_{1}+(1-\mu)\varLambda_{2})(d\xi)-\intop_{\Xi}\varGamma(d\xi)\right)\\
 & \leq & \mu\intop_{\Xi}\log(\frac{\varLambda_{1}(\xi)}{\varGamma(\xi)})\varLambda_{1}(d\xi)+(1-\mu)\intop_{\Xi}\log(\frac{\varLambda_{2}(\xi)}{\varGamma(\xi)})\varLambda_{2}(d\xi)\\
 &  & -\mu\left(\intop_{\Xi}\varLambda_{1}(d\xi)-\intop_{\Xi}\varGamma(d\xi)\right)-(1-\mu)\left(\intop_{\Xi}\varLambda_{2}(d\xi)-\intop_{\Xi}\varGamma(d\xi)\right)\\
 & = & \mu B(\varLambda_{1},\,\varGamma)+(1-\mu)B(\varLambda_{2},\,\varGamma),
\end{array}
\]
where the inequality follows from convexity of $x\log x$ in $x>0$,
i.e., $(\mu x_{1}+(1-\mu)x_{2})\log(\mu x_{1}+(1-\mu)x_{2})\leq\mu x_{1}\log x_{1}+(1-\mu)x_{2}\log x_{2}$.

If either $\varLambda_{1}$ or $\varLambda_{2}$ is not absolutely
continuous with respect to $\varGamma$ (without loss of generality,
assume that $\varLambda_{1}$ is not absolutely continuous with respect
to $\varGamma$), then we have for $\mu\in(0,\,1)$, 

\[
B(\mu\varLambda_{1}+(1-\mu)\varLambda_{2},\,\varGamma)\leq\mu B(\varLambda_{1},\,\varGamma)+(1-\mu)B(\varLambda_{2},\,\varGamma)=+\infty.
\]

(v) Introduce the function $h(x)=x\log x-x+1,\,x\geq0$, where $0\log0\triangleq0$.
Notice that $h(x)=H(x,1)$ and thus from Lemma \ref{lem:prox_preliminary}
we have $h(x)\geq0,$ for any $x\geq0$. Next, we show 

\begin{equation}
\left(\frac{4}{3}+\frac{2}{3}x\right)h(x)\geq(x-1)^{2},\,x\geq0.\label{keyinPinsker}
\end{equation}
Inequality (\ref{keyinPinsker}) obviously holds for $x=0$. For $x>0$,
the function $Q(x)\triangleq\left(\frac{4}{3}+\frac{2}{3}x\right)h(x)-(x-1)^{2}$
satisfies $Q(1)=Q'(1)=0,\,Q''(x)=\frac{4h(x)}{3x}\geq0$. Thus, from
the Taylor's expansion, we have that for any $x>0$, there exists
$z$ which is between $1$ and $x$, such that 

\[
Q(x)=Q(1)+Q'(1)(x-1)+Q''(z)(x-1)^{2}\geq0,
\]
which proves (\ref{keyinPinsker}). 

For any $\varLambda,\,\varGamma\in\mathcal{M}_{+}\left(\Xi\right)$,
there always exists $\varPhi$ such that $\varLambda\ll\varPhi$ and
$\varGamma\ll\varPhi$ (for example, take $\varPhi=\varLambda+\varGamma$).
Abusing notation, we define the two Radon-Nikodym derivatives $\varLambda(\xi)=\varLambda(d\xi)/\varPhi(d\xi)$
and $\varGamma(\xi)=\varGamma(d\xi)/\varPhi(d\xi)$. If $\varLambda\ll\varGamma$,
then

\[
\begin{array}{rcl}
\left\Vert \varLambda-\varGamma\right\Vert _{TV}^{2} & = & \left(\intop_{\Xi}\left|\varLambda(\xi)-\varGamma(\xi)\right|\varPhi(d\xi)\right)^{2}=\left(\intop_{\Xi}\varGamma(\xi)\left|\frac{\varLambda(\xi)}{\varGamma(\xi)}-1\right|\varPhi(d\xi)\right)^{2}\\
 & \leq & \left(\intop_{\Xi}\varGamma(\xi)\sqrt{\left(\frac{4}{3}+\frac{2}{3}\frac{\varLambda(\xi)}{\varGamma(\xi)}\right)h(\frac{\varLambda(\xi)}{\varGamma(\xi)})}\varPhi(d\xi)\right)^{2}\\
 & \leq & \left(\intop_{\Xi}(\frac{4}{3}\varGamma(\xi)+\frac{2}{3}\varLambda(\xi))\varPhi(d\xi)\right)\left(\intop_{\Xi}\varGamma(\xi)h(\frac{\varLambda(\xi)}{\varGamma(\xi)})\varPhi(d\xi)\right)\\
 & \leq & 2\rho B(\varLambda,\,\varGamma),
\end{array}
\]
where the first inequality follows from (\ref{keyinPinsker}), the
second follows from the Cauchy-Schwarz inequality, and the last follows
because $\left\Vert \varLambda\right\Vert _{TV}\leq\rho$ and $\left\Vert \varGamma\right\Vert _{TV}\leq\rho$.
If $\varLambda\not\ll\varGamma$, the inequality is straightforward
since $B(\varLambda,\,\varGamma)=+\infty$. 
\end{proof}
~

\paragraph*{Differentiability of $B$}

We begin by recalling the definition of the Gateaux differential. 
\begin{defn}
\cite[Section 7.2]{luenberger1997optimization} Let $X$ be a vector
space, and $T$ be a functional defined on a domain $D\subset X$
and having range in $(-\infty,\infty]$. Let $x\in D$ and let $h$
be an arbitrary vector in $X.$ If the limit 

\begin{equation}
\delta T(x;\,h)\triangleq\lim_{\alpha\rightarrow0}\frac{1}{\alpha}[T(x+\alpha h)-T(x)]\label{Gateauxdiff}
\end{equation}
exists, it is called the Gateaux differential of $T$ at $x$ with
increment $h$. If the limit (\ref{Gateauxdiff}) exists for each
$h\in X$, the transformation $T$ is said to be Gateaux differentiable
at $x$. 
\end{defn}

The next lemma says that $B\left(\varLambda,\,\varGamma\right)$ is
a Bregman divergence, i.e., it is the difference between the value
of $B(\cdot\,,\varLambda_{u})$ at $\varLambda$ and the first order
expansion of $B(\cdot\,,\varLambda_{u})$ around $\varGamma$ evaluated
at $\varLambda$. Lemma \ref{dualupdatesolution} shows that all the
dual updates lie in $\mathcal{M}^{cd}\left(\Xi\right)$, so we only
consider the domain $\mathcal{M}^{cd}\left(\Xi\right)$. 
\begin{lem}
\label{Bregmanview}The Gateaux differential of $B\left(\cdot,\,\varLambda_{u}\right)\text{ : }\mathcal{M}^{cd}\left(\Xi\right)\rightarrow\mathbb{R}$
at $\varLambda$ with increment $h\in\mathcal{M}^{cd}\left(\Xi\right)$
is 
\[
\delta B(\varLambda;\,h)=\int_{\Xi}\log\left(\frac{\varLambda(\xi)\mathrm{vol}\left(\Xi\right)}{\rho_{0}}\right)h(\xi)d\xi.
\]
Furthermore, $B\left(\varLambda,\,\varGamma\right)=B\left(\varLambda,\,\varLambda_{u}\right)-B\left(\varGamma,\,\varLambda_{u}\right)-\delta B(\varGamma;\,\varLambda-\varGamma)$.
\end{lem}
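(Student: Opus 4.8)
I would prove Lemma~\ref{Bregmanview} in two movements: first compute the Gateaux differential $\delta B(\varLambda;\,h)$ directly from the definition, and then assemble the Bregman identity by pure algebra once the differential is in hand.

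For the first step, fix $\varLambda\in\mathcal{M}^{cd}\left(\Xi\right)$ and $h\in\mathcal{M}^{cd}\left(\Xi\right)$, and write out
\[
B(\varLambda+\alpha h,\,\varLambda_{u})=\int_{\Xi}\log\!\left(\frac{(\varLambda(\xi)+\alpha h(\xi))\mathrm{vol}\left(\Xi\right)}{\rho_{0}}\right)(\varLambda(\xi)+\alpha h(\xi))\,d\xi-\int_{\Xi}(\varLambda(\xi)+\alpha h(\xi))\,d\xi+\rho_{0},
\]
using $\varLambda_{u}(\xi)=\rho_{0}/\mathrm{vol}\left(\Xi\right)$ and $\int_{\Xi}\varLambda_{u}(d\xi)=\rho_{0}$. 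Subtracting $B(\varLambda,\,\varLambda_{u})$, dividing by $\alpha$, and letting $\alpha\to0$, I would differentiate the integrand pointwise: the derivative of $t\log(ct)$ in $t$ is $\log(ct)+1$, so the leading term contributes $\int_{\Xi}\bigl(\log(\varLambda(\xi)\mathrm{vol}(\Xi)/\rho_{0})+1\bigr)h(\xi)\,d\xi$, while the $-\int(\varLambda+\alpha h)$ term contributes $-\int_{\Xi}h(\xi)\,d\xi$. The two ``$+1$'' and ``$-1$'' pieces cancel, leaving exactly $\int_{\Xi}\log\!\left(\varLambda(\xi)\mathrm{vol}(\Xi)/\rho_{0}\right)h(\xi)\,d\xi$. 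To justify interchanging the limit with the integral, I would note that $\Xi$ is compact, $\varLambda,h$ have continuous densities, and $\varLambda$ is bounded away from $0$ on its support (or handle the vanishing set carefully using $0\log0\triangleq0$ and continuity of $t\mapsto t\log t$ near $0$), so the difference quotients are dominated uniformly in $\alpha$ near $0$ and the dominated convergence theorem applies.

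For the second step, I would use Theorem~\ref{propertiesofB}(iii)/(i) together with the explicit formula for $B(\cdot,\,\varLambda_{u})$. Writing $B(\varLambda,\,\varLambda_{u})=\int_{\Xi}\log(\varLambda(\xi)\mathrm{vol}(\Xi)/\rho_{0})\varLambda(\xi)\,d\xi-\rho(\varLambda)+\rho_{0}$ and similarly for $\varGamma$, and plugging $h=\varLambda-\varGamma$ into the differential formula just derived, the quantity $B(\varLambda,\,\varLambda_{u})-B(\varGamma,\,\varLambda_{u})-\delta B(\varGamma;\,\varLambda-\varGamma)$ expands to
\[
\int_{\Xi}\log\!\left(\tfrac{\varLambda(\xi)}{\varGamma(\xi)}\right)\varLambda(\xi)\,d\xi-\bigl(\rho(\varLambda)-\rho(\varGamma)\bigr),
\]
after the $\log(\mathrm{vol}(\Xi)/\rho_{0})$ terms and the $\varGamma\log\varGamma$ terms cancel against the differential contribution; this is precisely $B(\varLambda,\,\varGamma)$ by Definition~\ref{defofB}. (On the set where $\varLambda$ is not absolutely continuous w.r.t.\ $\varGamma$—though within $\mathcal{M}^{cd}(\Xi)$ both dominate $\varLambda_u$ and the relevant quantities are finite—the identity is trivial since both sides are $+\infty$.)

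\textbf{Main obstacle.} The routine part is the algebra in the second step; the real care is needed in the first step in justifying the pointwise differentiation and the limit-integral interchange, specifically near points where $\varLambda(\xi)$ or $\varGamma(\xi)$ vanishes, since then $\log(\varLambda(\xi)\mathrm{vol}(\Xi)/\rho_{0})\to-\infty$ and one must check that $h(\xi)\log(\varLambda(\xi))\to0$-type cancellations hold and that no mass is lost. Restricting attention to $\mathcal{M}^{cd}(\Xi)$—continuous densities on a compact set—is what makes this tractable: either the densities are strictly positive (the generic case, in which everything is smooth), or one invokes uniform continuity plus the convention $0\log0=0$ to bound the difference quotients and apply dominated convergence.
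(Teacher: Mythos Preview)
Your proposal is correct and follows essentially the same route as the paper: write $B(\varLambda,\varLambda_u)=\int_\Xi c(\varLambda(\xi))\,d\xi$ for $c(\lambda)=\lambda\log(\lambda\,\mathrm{vol}(\Xi)/\rho_0)-(\lambda-\rho_0/\mathrm{vol}(\Xi))$, differentiate under the integral using $c'(\lambda)=\log(\lambda\,\mathrm{vol}(\Xi)/\rho_0)$, and then verify the Bregman identity by direct expansion. The only difference is that the paper dispatches the limit--integral interchange by citing \cite[Section 7.2, Example 2]{luenberger1997optimization} rather than arguing dominated convergence explicitly, and does not dwell on the $\varLambda(\xi)=0$ issue that you (rightly) flag as the delicate point.
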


\begin{proof}
We can rewrite $B\left(\varLambda,\,\varLambda_{u}\right)=\intop_{\Xi}c(\varLambda(\xi))d\xi$,
where $c(\lambda)\triangleq\log(\frac{\lambda\text{vol}\left(\Xi\right)}{\rho_{0}})\lambda-(\lambda-\rho_{0}\text{vol}\left(\Xi\right)^{-1})$,
$\lambda\geq0$. Notice that $c'(\lambda)=\log(\lambda\,\text{vol}\left(\Xi\right)/\rho_{0})$,
which is continuous in $\lambda$. From \cite[Section 7.2, Example 2]{luenberger1997optimization},
we see that the Gateaux differential of $B\left(\varLambda,\,\varLambda_{u}\right)$
at $\varLambda$ with increment $h$ is $\delta B(\varLambda;\,h)=\intop_{\Xi}\log\left(\varLambda(\xi)\text{vol}\left(\Xi\right)/\rho_{0}\right)h(\xi)d\xi$.
Then, it can be directly verified that 

\[
\begin{array}{rcl}
B\left(\varLambda,\,\varGamma\right) & = & B\left(\varLambda,\,\varLambda_{u}\right)-B\left(\varGamma,\,\varLambda_{u}\right)-\intop_{\Xi}\log\left(\varGamma(\xi)\text{vol}\left(\Xi\right)/\rho_{0}\right)(\varLambda(\xi)-\varGamma(\xi))d\xi\\
 & = & B\left(\varLambda,\,\varLambda_{u}\right)-B\left(\varGamma,\,\varLambda_{u}\right)-\delta B(\varGamma;\,\varLambda-\varGamma).
\end{array}
\]
\end{proof}
\begin{rem}
The Gateaux differential of $\varLambda\rightarrow B\left(\varLambda,\,\varLambda_{u}\right)$
does not exist on $\mathcal{M}_{+}\left(\Xi\right)$ since the limit
and the integral may not be interchangable. Lemma \ref{Bregmanview}
will be used in the proof of Lemma \ref{mu-mut} (which analyzes the
dual iterates for the regularized saddle-point problem). 
\end{rem}

\subsection{\label{subsec:Results-used-in Fan}The Regularized Saddle-Point Problem}

We closely examine the properties of Problem $\mathbb{SP}_{\kappa}$
in this subsection.

\paragraph*{Existence of saddle-points}

To proceed, we first prove Theorem \ref{existenceofSP_R} which guarantees
the existence of a saddle-point of Problem $\mathbb{SP}_{\kappa}$
by verifying the conditions of Fan's minimax theorem \cite[Theorem 1]{fan1953minimax}.

We need the following topological results. We equip the space $\mathcal{M}\left(\Xi\right)$
with the weak-star topology. In this topology, a base at $\varLambda\in\mathcal{M}\left(\Xi\right)$
is given by sets of the form 
\[
\left\{ \varLambda'\in\mathcal{M}\left(\Xi\right):\,\left|\left\langle \varLambda',\,f_{i}\right\rangle -\left\langle \varLambda,\,f_{i}\right\rangle \right|<\epsilon,\,i=1,2,\ldots,n\right\} 
\]
where $\left\{ f_{1},\,f_{2},\ldots,\,f_{n}\right\} $ is a finite
subset of $\mathcal{C}\left(\Xi\right)$.
\begin{lem}
\label{Hausdorff_compact_uppersemicont}(i) The weak-star topology
in $\mathcal{M}\left(\Xi\right)$ is Hausdorff. 

(ii) The set $\mathcal{V}$ is compact in the weak-star topology.

(iii) For each $x\in\mathcal{X}$, the mapping $\varLambda\rightarrow\left\langle \varLambda,\,G(x)\right\rangle -\kappa\,B\left(\varLambda,\,\varLambda_{u}\right)$
is upper semi-continuous with respect to the weak-star topology on
$\mathcal{V}$.
\end{lem}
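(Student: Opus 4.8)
The plan is to dispatch (i) and (ii) by standard facts about the weak-star topology on the dual Banach space $\mathcal{C}\left(\Xi\right)^{\ast}=\mathcal{M}\left(\Xi\right)$, and to reduce (iii) to the weak-star lower semi-continuity of $B(\cdot,\,\varLambda_{u})$, which I would prove via a Gibbs-type variational formula. For (i): $\mathcal{C}\left(\Xi\right)$ separates the points of $\mathcal{M}\left(\Xi\right)$ (two finite signed Borel measures on the compact metric space $\Xi$ agreeing against every continuous function are equal, by the uniqueness part of the Riesz representation theorem), so given $\varLambda_{1}\neq\varLambda_{2}$ one picks $f\in\mathcal{C}\left(\Xi\right)$ with $a\triangleq\langle\varLambda_{1},\,f\rangle\neq\langle\varLambda_{2},\,f\rangle\triangleq b$, disjoint open intervals $I_{a}\ni a$ and $I_{b}\ni b$ in $\mathbb{R}$, and notes that $\{\varLambda:\langle\varLambda,\,f\rangle\in I_{a}\}$ and $\{\varLambda:\langle\varLambda,\,f\rangle\in I_{b}\}$ are disjoint weak-star open neighborhoods. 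For (ii): by the Banach--Alaoglu theorem the ball $\overline{B}\triangleq\{\varLambda\in\mathcal{M}\left(\Xi\right):\left\Vert\varLambda\right\Vert_{TV}\leq\bar{\rho}(\theta)\}$ is weak-star compact, being a scaled closed ball of $\mathcal{C}\left(\Xi\right)^{\ast}$; since each $\varLambda\mapsto\langle\varLambda,\,f\rangle$ is weak-star continuous, $\mathcal{M}_{+}\left(\Xi\right)=\bigcap_{f\in\mathcal{C}_{+}\left(\Xi\right)}\{\varLambda:\langle\varLambda,\,f\rangle\geq0\}$ is weak-star closed, so $\mathcal{V}=\overline{B}\cap\mathcal{M}_{+}\left(\Xi\right)$ is a weak-star closed subset of a weak-star compact set, hence weak-star compact.

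For (iii), the map $\varLambda\mapsto\langle\varLambda,\,G(x)\rangle$ is weak-star continuous because $G(x)\in\mathcal{C}\left(\Xi\right)$ (Assumptions A4 and A5), and the sum of a continuous function with an upper semi-continuous one is upper semi-continuous, so it suffices to show $\varLambda\mapsto B(\varLambda,\,\varLambda_{u})$ is weak-star lower semi-continuous on $\mathcal{V}$. The key step I would carry out is the variational representation
\[
B(\varLambda,\,\varLambda_{u})=\sup_{u\in\mathcal{C}\left(\Xi\right)}\left\{\int_{\Xi}u(\xi)\,\varLambda(d\xi)-\int_{\Xi}\left(e^{u(\xi)}-1\right)\varLambda_{u}(d\xi)\right\},\qquad\varLambda\in\mathcal{M}_{+}\left(\Xi\right).
\]
Once this is shown, $B(\cdot,\,\varLambda_{u})$ is a pointwise supremum of weak-star continuous affine functionals of $\varLambda$ (the subtracted term is a constant for each fixed $u$), hence weak-star lower semi-continuous, completing the proof. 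The inequality ``$\geq$'' is Fenchel--Young for the convex integrand $c(\lambda)=\lambda\log(\lambda\,\text{vol}\left(\Xi\right)/\rho_{0})-\lambda+\rho_{0}/\text{vol}\left(\Xi\right)$ of Lemma \ref{Bregmanview}, whose conjugate is $c^{\ast}(t)=(\rho_{0}/\text{vol}\left(\Xi\right))(e^{t}-1)$: for $\varLambda\ll\varLambda_{u}$ with density $\lambda$, integrate $c(\lambda(\xi))\geq u(\xi)\lambda(\xi)-c^{\ast}(u(\xi))$ over $\xi$, and the bound is vacuous when $B(\varLambda,\,\varLambda_{u})=+\infty$.

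For the reverse inequality I would first reduce the supremum over $\mathcal{C}\left(\Xi\right)$ to the supremum over bounded Borel functions (approximate a bounded Borel $u$ by continuous $u_{k}$ with $\left\Vert u_{k}\right\Vert_{\infty}\leq\left\Vert u\right\Vert_{\infty}$ converging to $u$ in $L^{1}(\varLambda+\varLambda_{u})$, then pass to the limit using $|e^{u_{k}}-e^{u}|\leq e^{\left\Vert u\right\Vert_{\infty}}|u_{k}-u|$), then, when $\varLambda\ll\varLambda_{u}$, insert the truncated log-density $u_{n}=\max\{-n,\min\{n,\log(\lambda\,\text{vol}\left(\Xi\right)/\rho_{0})\}\}$ and let $n\to\infty$ via monotone and dominated convergence together with the tail estimate $(\rho_{0}/\text{vol}\left(\Xi\right))\,e^{n}\left|\{\lambda>(\rho_{0}/\text{vol}\left(\Xi\right))e^{n}\}\right|\to0$, recovering $B(\varLambda,\,\varLambda_{u})$ exactly (finite or $+\infty$); when $\varLambda\not\ll\varLambda_{u}$, fix a $\varLambda_{u}$-null Borel set $A$ with $\varLambda(A)>0$, take a compact $K\subseteq A$ with $\varLambda(K)\geq\varLambda(A)/2$ and (by outer regularity) an open $U\supseteq K$ with $\varLambda_{u}(U)<e^{-2M}$, and a Urysohn function $u$ valued in $[0,M]$, equal to $M$ on $K$ and vanishing off $U$, so the bracket is at least $M\varLambda(A)/2-e^{-M}\to+\infty$ and the supremum is $+\infty=B(\varLambda,\,\varLambda_{u})$. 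The main obstacle is precisely this ``$\leq$'' direction: the density/truncation reduction and the limit with the truncated log-density need care (the negative part of $\lambda\log\lambda$ is bounded but $e^{u_{n}}$ is not uniformly dominated, hence the tail estimate), and the singular case needs the right concentrating test functions. Alternatively, one may shortcut this step by citing a standard weak-star lower semi-continuity theorem for integral functionals of measures with convex, lower semi-continuous integrand of superlinear growth, since $B(\cdot,\,\varLambda_{u})$ is exactly such a functional with the ``$+\infty$ on singular parts'' convention already built into Definition \ref{defofB}.
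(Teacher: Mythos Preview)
Your proof is correct. Parts (i) coincide with the paper's argument almost verbatim. For (ii) you take a slightly cleaner route: the paper argues sequential compactness via Alaoglu and Riesz representation (implicitly using that $\mathcal{C}(\Xi)$ is separable so the weak-star topology on bounded sets is metrizable), whereas you invoke Banach--Alaoglu directly and observe that $\mathcal{M}_{+}(\Xi)$ is weak-star closed as an intersection of closed half-spaces, which avoids any metrizability discussion.

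The real divergence is in (iii). Both you and the paper reduce to proving that $\varLambda\mapsto B(\varLambda,\varLambda_{u})$ is weak-star lower semi-continuous. The paper does this by writing $B(\varLambda,\varLambda_{u})=\int_{\Xi}c(\varLambda(\xi))\,d\xi$ with $c$ convex and continuous, and then citing a general lower semi-continuity theorem for integral functionals of measures \cite[Theorem~5.27]{fonseca2007modern}; this is short but leans entirely on an external reference and leaves the singular case implicit. Your approach---the Donsker--Varadhan/Gibbs variational representation $B(\varLambda,\varLambda_{u})=\sup_{u\in\mathcal{C}(\Xi)}\{\langle\varLambda,u\rangle-\int(e^{u}-1)\,d\varLambda_{u}\}$---is more self-contained: once established, lower semi-continuity is immediate because $B$ is a pointwise supremum of weak-star continuous affine functionals, and your outline correctly identifies the delicate part as the ``$\leq$'' direction (truncated log-density plus a tail estimate in the absolutely continuous case, a Urysohn construction in the singular case). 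Your alternative of citing a standard l.s.c.\ theorem for convex integral functionals is exactly what the paper does.
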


\begin{proof}
(i) Let $\varLambda_{1}$ and $\varLambda_{2}$ be any two different
measures in $\mathcal{M}_{+}\left(\Xi\right)$, then there exists
$f\in\mathcal{C}\left(\Xi\right)$, such that $\intop_{\Xi}f(\xi)\varLambda_{1}(d\xi)\neq\intop_{\Xi}f(\xi)\varLambda_{2}(d\xi)$
and so $l\triangleq\left|\intop_{\Xi}f(\xi)\varLambda_{1}(d\xi)-\intop_{\Xi}f(\xi)\varLambda_{2}(d\xi)\right|\ne0$.
Further, we define 

\[
O(\varLambda_{i},\,l/3,\,f)\triangleq\left\{ \varLambda\in\mathcal{M}_{+}\left(\Xi\right):\,\left|\intop_{\Xi}f(\xi)\varLambda(d\xi)-\intop_{\Xi}f(\xi)\varLambda_{i}(d\xi)\right|<l/3\right\} ,\,i=1,2,
\]
which are two disjoint open sets in the weak-star topology in $\mathcal{M}_{+}\left(\Xi\right)$.
Clearly $\varLambda_{1}\in O(\varLambda_{1},\,l/3,\,f)$ and $\varLambda_{2}\in O(\varLambda_{2},\,l/3,\,f)$.
Therefore, the weak-star topology in $\mathcal{M}_{+}\left(\Xi\right)$
is Hausdorff.

(ii) Let $\left\{ \varLambda_{n}\right\} _{n\geq1}\subset\mathcal{V}$
be a sequence of non-negative measures on the compact set $\Xi$ (Assumption\textbf{\textcolor{black}{{}
A3}}). By Alaoglu\textquoteright s theorem and the Riesz representation
theorem, there exists a subsequence of $\left\{ \varLambda_{n}\right\} _{n\geq1}$
converging in the weak-star topology to a linear functional $F$ on
$\mathcal{C}\left(\Xi\right)$ with $\left\Vert F\right\Vert _{\mathcal{C}\left(\Xi\right)^{\ast}}\leq\overline{\rho}(\theta)$.
Further, this $F$ is non-negative and $F(e)\leq\overline{\rho}(\theta)$,
where $e\in\mathcal{C}\left(\Xi\right)$ is the constant function
everywhere equal to one on $\Xi$. It follows that $F$ corresponds
to a non-negative measure in $\mathcal{V}$. Hence, for any sequence
of measures in $\mathcal{V}$, there exists a subsequence converging
to a measure in $\mathcal{V}$ in the weak-star topology. 

(iii) Let $\left\{ \varLambda_{n}\right\} _{n\geq1}\subset\mathcal{V}$
converge to $\varLambda$ in the weak-star topology in $\mathcal{V}$.
Since $G(x)\in\mathcal{C}\left(\Xi\right)$ from Assumption \textbf{\textcolor{black}{A5}},
the mapping $\varLambda\rightarrow\left\langle \varLambda,\,G(x)\right\rangle $
is continuous with respect to the weak-star topology in $\mathcal{V}$
according to the definition of weak-star topology. In addition, $\Xi$
is compact from Assumptions \textbf{\textcolor{black}{A3}}. Noting
that $B\left(\varLambda,\,\varLambda_{u}\right)=\intop_{\Xi}c(\varLambda(\xi))d\xi$
where $c(\lambda)=\log(\lambda\text{vol}\left(\Xi\right)/\rho_{0})\lambda-(\lambda-\rho_{0}\text{vol}\left(\Xi\right)^{-1})$,
$\lambda\geq0$, and that $c(\lambda)$ is convex and continuous in
$\lambda$, we establish that the mapping $\varLambda\rightarrow B\left(\varLambda,\,\varLambda_{u}\right)$
is lower semi-continuous with respect to the weak-star topology in
$\mathcal{V}$ by invoking \cite[Theorem 5.27]{fonseca2007modern}.
Since $\kappa\in(0,1]$, and the negative of a lower semi-continuous
mapping is upper semi-continuous, we arrive at the desired conclusion.
\end{proof}
Next, we provide the proof for Theorem \ref{existenceofSP_R} by verifying
the three conditions of Fan's minimax theorem.
\begin{proof}[Proof of Theorem \ref{existenceofSP_R}]
 We check the following three conditions of Fan's minimax theorem
\cite[Theorem 1]{fan1953minimax}:

(i) From Assumption \textbf{A1}, $\mathcal{X}\subset\mathbb{R}^{m}$
is compact and Hausdorff. From Lemmas \ref{Hausdorff_compact_uppersemicont}(i)
and \ref{Hausdorff_compact_uppersemicont}(ii), it follows that $\mathcal{V}$
is Hausdorff and weak-star compact.

(ii) Now, $L_{\kappa}(x,\,\varLambda)$ is a real-valued function
defined on $\mathcal{X}\times\mathcal{V}$. For every $\varLambda\in\mathcal{V}$,
$L_{\kappa}(x,\,\varLambda)$ is continuous on $\mathcal{X}$ due
to Assumptions\textbf{ A2} and \textbf{A4}. From Lemma \ref{Hausdorff_compact_uppersemicont}(iii),
for every $x\in\mathcal{X}$, $L_{\kappa}(x,\,\varLambda)$ is upper
semi-continuous with respect to the weak-star topology on $\mathcal{V}$.

(iii) Next, notice that since $B\left(\varLambda,\,\varLambda_{u}\right)$
is convex in $\varLambda$ by Theorem \ref{propertiesofB}(iv) and
$\left\langle \varLambda,\,G(x)\right\rangle $ is linear in $\varLambda$,
$\left\langle \varLambda,\,G(x)\right\rangle -\kappa\,B\left(\varLambda,\,\varLambda_{u}\right)$
must be concave in $\varLambda$. Additionally, $f\left(x\right)+\left\langle \varLambda,\,G(x)\right\rangle $
is convex in $x$.
\end{proof}

\paragraph*{Approximation error}

To guarantee that Problem $\mathbb{SP}_{\kappa}$ closely approximates
the original Problem $\mathbb{SP}$, we want to show that the gap
between the value of the inner maximization over $\mathcal{V}$ of
the regularized Lagrangian and the value of the inner maximization
over $\mathcal{V}$ of the original Lagrangian can be made arbitrarily
small. 

Notice that $\mathcal{\mathcal{P}}(\Xi)$ is compact in the weak-star
topology since $\Xi$ is compact \cite[Theorem 15.11]{aliprantisinfinite},
and $\phi\rightarrow\mathbb{E}_{\xi\sim\phi}\left[g\left(x,\,\xi\right)\right]-\kappa\,D\left(\phi,\,\phi_{u}\right)$
is upper semi-continuous in $\phi\in\mathcal{\mathcal{P}}(\Xi)$ with
respect to the weak-star topology by the same reasoning as Lemma \ref{Hausdorff_compact_uppersemicont}(iii).
Therefore, the maximizer of $\mathbb{E}_{\xi\sim\phi}\left[g\left(x,\,\xi\right)\right]-\kappa\,D\left(\phi,\,\phi_{u}\right)$
is attained in $\phi\in\mathcal{\mathcal{P}}(\Xi)$, and we may denote
it as

\[
\phi_{\kappa}(x)\in\arg\max_{\phi\in\mathcal{\mathcal{P}}(\Xi)}\left\{ \mathbb{E}_{\xi\sim\phi}\left[g\left(x,\,\xi\right)\right]-\kappa\,D\left(\phi,\,\phi_{u}\right)\right\} ,
\]
for all fixed $x\in\mathcal{X}$.

The following result verifies that the inner maximization of Problem
$\mathbb{SP}_{\kappa}$ can be transformed into a two-stage optimization
problem: in the first stage, we optimize over densities $\phi\in\mathcal{\mathcal{P}}(\Xi)$;
in the second stage, we optimize over scalars $\rho\in[0,\,\bar{\rho}(\theta)]$. 
\begin{lem}
\label{Vrhophi} For any $\kappa\in(0,1]$ and $x\in\mathcal{X}$,
we have 
\begin{equation}
\max_{\varLambda\in\mathcal{V}}\left\{ \left\langle \varLambda,\,G(x)\right\rangle -\kappa\,B\left(\varLambda,\,\varLambda_{u}\right)\right\} =\max_{\rho\in[0,\,\bar{\rho}(\theta)]}\left\{ \rho\max_{\phi\in\mathcal{\mathcal{P}}(\Xi)}\left[\mathbb{E}_{\xi\sim\phi}\left[g\left(x,\,\xi\right)\right]-\kappa\,D\left(\phi,\,\phi_{u}\right)\right]-\kappa H(\rho,\rho_{0})\right\} .\label{twostageforinnermaxofSP_R}
\end{equation}
\end{lem}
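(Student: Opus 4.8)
The plan is to use the one-to-one correspondence from Lemma \ref{lem:prox_preliminary}(i) between a measure $\varLambda\in\mathcal{M}_{+}\left(\Xi\right)$ and the pair $(\rho(\varLambda),\,\phi(\varLambda))\in\mathbb{R}_{+}\times\mathcal{P}(\Xi)$, together with the decomposition in Theorem \ref{propertiesofB}(i), namely $B(\varLambda,\,\varLambda_{u})=\rho(\varLambda)\,D(\phi(\varLambda),\,\phi(\varLambda_{u}))+H(\rho(\varLambda),\,\rho(\varLambda_{u}))$. Since $\varLambda_{u}$ is the uniform measure of total mass $\rho_{0}$, we have $\rho(\varLambda_{u})=\rho_{0}$ and $\phi(\varLambda_{u})=\phi_{u}$, the uniform probability density on $\Xi$. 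First I would rewrite the objective $\left\langle \varLambda,\,G(x)\right\rangle -\kappa\,B\left(\varLambda,\,\varLambda_{u}\right)$ in terms of $(\rho,\phi)$: using $\left\langle \varLambda,\,G(x)\right\rangle =\rho\,\mathbb{E}_{\xi\sim\phi}[g(x,\xi)]$ and the decomposition above, the objective becomes $\rho\left(\mathbb{E}_{\xi\sim\phi}[g(x,\xi)]-\kappa\,D(\phi,\phi_{u})\right)-\kappa\,H(\rho,\rho_{0})$. The constraint $\varLambda\in\mathcal{V}$, i.e.\ $\|\varLambda\|_{TV}\le\bar{\rho}(\theta)$, translates exactly into $\rho\in[0,\bar{\rho}(\theta)]$ with $\phi\in\mathcal{P}(\Xi)$ unconstrained. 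Therefore the left-hand maximization equals $\max_{\rho\in[0,\bar{\rho}(\theta)],\,\phi\in\mathcal{P}(\Xi)}\left\{\rho\left(\mathbb{E}_{\xi\sim\phi}[g(x,\xi)]-\kappa\,D(\phi,\phi_{u})\right)-\kappa\,H(\rho,\rho_{0})\right\}$.

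The remaining step is to interchange the joint maximization into an iterated one, $\max_{\rho}\max_{\phi}$, and to pull the inner $\max_{\phi}$ past the factor $\rho$. Since $\rho\ge0$, the map $t\mapsto\rho t-\kappa H(\rho,\rho_{0})$ is nondecreasing in $t$, so for each fixed $\rho$ we have $\max_{\phi}\left\{\rho\left(\mathbb{E}_{\xi\sim\phi}[g(x,\xi)]-\kappa D(\phi,\phi_{u})\right)-\kappa H(\rho,\rho_{0})\right\}=\rho\max_{\phi}\left\{\mathbb{E}_{\xi\sim\phi}[g(x,\xi)]-\kappa D(\phi,\phi_{u})\right\}-\kappa H(\rho,\rho_{0})$, which is exactly the right-hand side of \eqref{twostageforinnermaxofSP_R}. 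To make this rigorous one needs the inner maximum over $\phi$ to be attained, which is precisely the point recorded just above the lemma statement: $\mathcal{P}(\Xi)$ is weak-star compact and $\phi\mapsto\mathbb{E}_{\xi\sim\phi}[g(x,\xi)]-\kappa D(\phi,\phi_{u})$ is weak-star upper semi-continuous, so $\phi_{\kappa}(x)$ exists; the outer maximum over the compact interval $[0,\bar{\rho}(\theta)]$ of a continuous function of $\rho$ is likewise attained.

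The one genuinely delicate point is handling the case $\varLambda=0$ (equivalently $\rho=0$), where $\phi(\varLambda)$ is undefined and the formula $B(\varLambda,\varLambda_{u})=\rho D(\phi,\phi_{u})+H(\rho,\rho_{0})$ must be read with the convention that the first term vanishes and $H(0,\rho_{0})=\rho_{0}$; I would note explicitly that at $\rho=0$ both sides of the identity reduce to $-\kappa\rho_{0}$ regardless of $\phi$, so the value is unambiguous and consistent with taking $\rho\to 0$. I expect this boundary bookkeeping, rather than the main algebraic manipulation, to be the chief obstacle, and I would dispatch it by treating $\rho=0$ as a separate trivial case before applying the correspondence and the monotonicity argument on $\rho\in(0,\bar{\rho}(\theta)]$.
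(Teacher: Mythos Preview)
Your proposal is correct and follows essentially the same route as the paper: both use the bijection from Lemma~\ref{lem:prox_preliminary}(i) and the decomposition $B(\varLambda,\varLambda_{u})=\rho\,D(\phi,\phi_{u})+H(\rho,\rho_{0})$ from Theorem~\ref{propertiesofB}(i) to rewrite the objective as $\rho\bigl(\mathbb{E}_{\phi}[g(x,\xi)]-\kappa D(\phi,\phi_{u})\bigr)-\kappa H(\rho,\rho_{0})$ and then observe that the inner maximization over $\phi$ is decoupled from $\rho$. Your write-up is in fact more careful than the paper's terse proof: you explicitly justify pulling $\rho$ past $\max_{\phi}$ via monotonicity (since $\rho\ge 0$), note attainment of the inner maximum, and handle the $\rho=0$ boundary case, none of which the paper spells out.
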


\begin{proof}
From Theorem \ref{propertiesofB}(i), we directly obtain

\[
\left\langle \varLambda,\,G(x)\right\rangle -\kappa\,B\left(\varLambda,\,\varLambda_{u}\right)=\rho(\varLambda)\left[\mathbb{E}_{\xi\sim\phi(\varLambda)}\left[g\left(x,\,\xi\right)\right]-\kappa\,D\left(\phi(\varLambda),\,\phi_{u}\right)\right]-\kappa H(\rho(\varLambda),\rho_{0}).
\]
Next, there is a one-to-one correspondence between $\varLambda\in\mathcal{V}$
and $(\rho(\varLambda),\,\phi(\varLambda))\in[0,\,\bar{\rho}(\theta)]\times\mathcal{P}(\Xi)$
by Lemma \ref{lem:prox_preliminary}(i). We observe that the inner
maximization in the right hand side of (\ref{twostageforinnermaxofSP_R})
is decoupled from the outer one, so we arrive at the conclusion.
\end{proof}
The next result shows that the inner optimization problem in (\ref{twostageforinnermaxofSP_R})
has a closed form solution by the calculus of variations.
\begin{lem}
\label{maxoverphi} For any $\kappa\in(0,1]$ and $x\in\mathcal{X}$,
we have 

\[
\left[\phi_{\kappa}(x)\right](\xi)=\frac{\exp\left(g(x,\xi)/\kappa\right)}{\intop_{\Xi}\exp\left(g(x,\xi)/\kappa\right)d\xi},\,\forall\xi\in\Xi.
\]
Furthermore, 

\begin{equation}
\max_{\phi\in\mathcal{\mathcal{P}}(\Xi)}\left\{ \mathbb{E}_{\xi\sim\phi}\left[g\left(x,\,\xi\right)\right]-\kappa\,D\left(\phi,\,\phi_{u}\right)\right\} =\kappa\log\left(\intop_{\Xi}\exp\left(g(x,\xi)/\kappa\right)d\xi\right)-\kappa\log(\mathrm{vol}\left(\Xi\right)).\label{innermaxoverprob}
\end{equation}
\end{lem}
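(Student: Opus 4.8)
The plan is to solve the constrained variational problem directly by the calculus of variations, following the same route already used in the proof of Theorem~\ref{dualupdatesolution}. First I would make the regularizer explicit: since $\phi_u(\xi)=\mathrm{vol}(\Xi)^{-1}$ on $\Xi$, we have $D(\phi,\phi_u)=\int_\Xi\phi(\xi)\log\phi(\xi)\,d\xi+\log(\mathrm{vol}(\Xi))$, so that
\[
\mathbb{E}_{\xi\sim\phi}[g(x,\xi)]-\kappa\,D(\phi,\phi_u)
=\int_\Xi\bigl(g(x,\xi)\phi(\xi)-\kappa\,\phi(\xi)\log\phi(\xi)\bigr)\,d\xi-\kappa\log(\mathrm{vol}(\Xi)),
\]
to be maximized over $\phi\in\mathcal{M}_+(\Xi)$ subject to $\int_\Xi\phi(\xi)\,d\xi=1$. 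The integrand is concave in $\phi$ because $-\lambda\log\lambda$ is concave in $\lambda>0$, and the constraint is linear, so this is a concave program; existence of a maximizer has already been established from weak-star compactness of $\mathcal{P}(\Xi)$ and upper semicontinuity of $\phi\rightarrow\mathbb{E}_{\xi\sim\phi}[g(x,\xi)]-\kappa\,D(\phi,\phi_u)$.

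Next I would introduce a scalar Lagrange multiplier $\upsilon\in\mathbb{R}$ for the normalization constraint and apply Euler's equation (\cite[Section~7.5]{luenberger1997optimization}) to the integrand $g(x,\xi)\phi-\kappa\phi\log\phi-\upsilon\phi$; differentiating in $\phi$ and setting the result to zero yields $g(x,\xi)-\kappa(\log\phi(\xi)+1)-\upsilon=0$, i.e.\ $\phi(\xi)=\exp\!\bigl((g(x,\xi)-\upsilon)/\kappa-1\bigr)$. Writing this as $\phi(\xi)=C\exp(g(x,\xi)/\kappa)$ and enforcing $\int_\Xi\phi(\xi)\,d\xi=1$ pins down $C=\bigl(\int_\Xi\exp(g(x,\xi)/\kappa)\,d\xi\bigr)^{-1}$, giving the claimed formula for $\phi_\kappa(x)$. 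It is worth checking here that this candidate genuinely lies in $\mathcal{P}(\Xi)$: $g(\cdot,\xi)$ is continuous on the compact set $\Xi$ by Assumption~\textbf{A5}, hence $\exp(g(x,\cdot)/\kappa)$ is continuous, strictly positive, and has finite positive integral, so $\phi_\kappa(x)$ is a bona fide continuous density. By concavity of the objective on the convex feasible set, this stationary feasible point is the global maximizer.

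Finally I would substitute $\phi_\kappa(x)$ back into the objective to obtain (\ref{innermaxoverprob}). Abbreviating $Z\triangleq\int_\Xi\exp(g(x,\xi)/\kappa)\,d\xi$, we have $\log\bigl(\phi_\kappa(x)(\xi)\,\mathrm{vol}(\Xi)\bigr)=g(x,\xi)/\kappa-\log Z+\log(\mathrm{vol}(\Xi))$, so that
\[
D(\phi_\kappa(x),\phi_u)=\int_\Xi\phi_\kappa(x)(\xi)\log\bigl(\phi_\kappa(x)(\xi)\,\mathrm{vol}(\Xi)\bigr)\,d\xi
=\tfrac{1}{\kappa}\mathbb{E}_{\xi\sim\phi_\kappa(x)}[g(x,\xi)]-\log Z+\log(\mathrm{vol}(\Xi)),
\]
and therefore $\mathbb{E}_{\xi\sim\phi_\kappa(x)}[g(x,\xi)]-\kappa\,D(\phi_\kappa(x),\phi_u)=\kappa\log Z-\kappa\log(\mathrm{vol}(\Xi))$, which is exactly the right-hand side of (\ref{innermaxoverprob}).

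I expect the only real subtlety is the rigorous justification of the Euler-equation step in this infinite-dimensional setting, namely that first-order stationarity together with feasibility is genuinely sufficient for global optimality; this is taken care of by concavity of the integrand combined with the already-established existence of a maximizer. Alternatively, one can sidestep the calculus of variations entirely and observe that (\ref{innermaxoverprob}) is precisely the Gibbs (Donsker--Varadhan) variational formula $\kappa\log\int_\Xi e^{g(x,\xi)/\kappa}\phi_u(\xi)\,d\xi=\sup_{\phi\in\mathcal{P}(\Xi)}\{\mathbb{E}_{\xi\sim\phi}[g(x,\xi)]-\kappa\,D(\phi,\phi_u)\}$ applied to the uniform reference measure $\phi_u$, with the supremum attained at the exponential tilt $\phi_\kappa(x)$.
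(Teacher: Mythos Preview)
Your proposal is correct and follows essentially the same route as the paper: set up the constrained variational problem, apply Euler's equation with a Lagrange multiplier for the normalization constraint to obtain the exponential-tilt density, and substitute back to evaluate the optimum. Your write-up is slightly more detailed (explicitly checking concavity and that the candidate is a bona fide density, and noting the Donsker--Varadhan alternative), but the argument is the same.
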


\begin{proof}
We note that $\max_{\phi\in\mathcal{\mathcal{P}}(\Xi)}\left\{ \mathbb{E}_{\xi\sim\phi}\left[g\left(x,\,\xi\right)\right]-\kappa\,D\left(\phi,\,\phi_{u}\right)\right\} $
is a constrained calculus of variations problem:

\[
\begin{array}{rcl}
\max_{\phi\in\mathcal{M}_{+}\left(\Xi\right)} &  & \intop_{\Xi}g\left(x,\,\xi\right)\phi(\xi)-\kappa\log\left(\phi(\xi)\mathrm{vol}\left(\Xi\right)\right)\phi(\xi)d\xi\\
s.t. &  & \intop_{\Xi}\phi(\xi)d\xi=1.
\end{array}
\]
Using Euler's equation in the calculus of variations (see \cite[Section 7.5]{luenberger1997optimization}),
we obtain after simplification, 

\begin{equation}
\kappa\,\log\left(\phi(\xi)\right)=g\left(x,\,\xi\right)+C,\,\forall\xi\in\Xi,\label{Euler-1}
\end{equation}
where $C=\upsilon+\kappa\log(\mathrm{vol}\left(\Xi\right))-\kappa$
and $\upsilon\in\mathbb{R}$ is the Lagrange multiplier of the constraint
$\intop_{\Xi}\phi(\xi)d\xi=1$. From (\ref{Euler-1}) and the constraint
$\intop_{\Xi}\phi(\xi)d\xi=1$, we obtain 

\begin{equation}
\left[\phi_{\kappa}(x)\right](\xi)=\frac{\exp\left(g(x,\xi)/\kappa\right)}{\intop_{\Xi}\exp\left(g(x,\xi)/\kappa\right)d\xi},\,\forall\xi\in\Xi.\label{phi_Rx}
\end{equation}
Therefore, by replacing (\ref{phi_Rx}) in $\mathbb{E}_{\xi\sim\phi}\left[g\left(x,\,\xi\right)\right]-\kappa\,D\left(\phi,\,\phi_{u}\right)$,
we obtain (\ref{innermaxoverprob}) after simplification. 
\end{proof}
We need the following intermediate result to bound $\max_{\phi\in\mathcal{\mathcal{P}}(\Xi)}\left\{ \mathbb{E}_{\xi\sim\phi}\left[g\left(x,\,\xi\right)\right]-\kappa\,D\left(\phi,\,\phi_{u}\right)\right\} $,
and this bound plays an important role later in establishing Lemma
\ref{maxsupdiff}. Since $\Xi\subset\mathbb{R}^{d}$ is full dimensional,
there exist balls contained in $\Xi$. Recall that $B_{R_{\Xi}}(\tilde{\xi})$
is the largest Euclidean ball included in $\Xi\subset\mathbb{R}^{d}$
with radius $R_{\Xi}$ centered at $\tilde{\xi}$ and its volume is
$\frac{\pi^{d/2}}{\Gamma(d/2+1)}R_{\Xi}^{d}$. In this proof, we use
the Assumption \textbf{\textcolor{black}{A3}} that $\Xi$ is full
dimensional and convex. 
\begin{lem}
\label{importbound} For any fixed $\kappa\in(0,1]$ and $x\in\mathcal{X},$
we have 

\begin{equation}
\int_{\Xi}\exp\left(\frac{g(x,\xi)}{\kappa}\right)d\xi\geq\exp\left(\frac{1}{\kappa}\max_{\xi\in\Xi}g(x,\xi)\right)\exp\left(-L_{g,\,\Xi}(R_{\Xi}+D_{\Xi})\right)\frac{\pi^{d/2}}{\Gamma(d/2+1)}(\kappa R_{\Xi})^{d}.\label{keybound}
\end{equation}
\end{lem}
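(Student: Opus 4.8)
The plan is to bound the integral from below by the contribution of a single Euclidean ball of radius $\kappa R_{\Xi}$ that is contained in $\Xi$ and sits close to the maximizer of $g(x,\cdot)$ over $\Xi$. First I would fix $\xi^{\ast}\in\arg\max_{\xi\in\Xi}g(x,\xi)$, which exists by compactness of $\Xi$ (Assumption \textbf{A3}) and continuity of $g(x,\cdot)$, and write $g^{\ast}\triangleq g(x,\xi^{\ast})=\max_{\xi\in\Xi}g(x,\xi)$. Let $\tilde{\xi}$ be the center of the largest inscribed ball $B_{R_{\Xi}}(\tilde{\xi})\subseteq\Xi$, and define the center $\xi_{\kappa}\triangleq(1-\kappa)\xi^{\ast}+\kappa\tilde{\xi}$.

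The key step is to verify that $B_{\kappa R_{\Xi}}(\xi_{\kappa})\subseteq\Xi$. Given any $z$ with $\|z\|\leq\kappa R_{\Xi}$, I would write $\xi_{\kappa}+z=(1-\kappa)\xi^{\ast}+\kappa\bigl(\tilde{\xi}+z/\kappa\bigr)$; since $\|z/\kappa\|\leq R_{\Xi}$, the point $\tilde{\xi}+z/\kappa$ lies in $B_{R_{\Xi}}(\tilde{\xi})\subseteq\Xi$, and since $\xi^{\ast}\in\Xi$ and $\kappa\in(0,1]$, convexity of $\Xi$ forces $\xi_{\kappa}+z\in\Xi$. Next I would control distances to $\xi^{\ast}$: for any $\xi\in B_{\kappa R_{\Xi}}(\xi_{\kappa})$, the triangle inequality and $\|\tilde{\xi}-\xi^{\ast}\|\leq D_{\Xi}$ give $\|\xi-\xi^{\ast}\|\leq\|\xi-\xi_{\kappa}\|+\|\xi_{\kappa}-\xi^{\ast}\|\leq\kappa R_{\Xi}+\kappa\|\tilde{\xi}-\xi^{\ast}\|\leq\kappa(R_{\Xi}+D_{\Xi})$.

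Combining this with the $L_{g,\,\Xi}$-Lipschitz continuity of $\xi\mapsto g(x,\xi)$ (Assumption \textbf{A5}) yields $g(x,\xi)\geq g^{\ast}-L_{g,\,\Xi}\kappa(R_{\Xi}+D_{\Xi})$ on the ball, so dividing by $\kappa$ in the exponent (the $\kappa$ cancels) gives $\exp(g(x,\xi)/\kappa)\geq\exp(g^{\ast}/\kappa)\exp(-L_{g,\,\Xi}(R_{\Xi}+D_{\Xi}))$ for all $\xi\in B_{\kappa R_{\Xi}}(\xi_{\kappa})$. Finally I would restrict the integral over $\Xi$ to this ball and use $\text{vol}\bigl(B_{\kappa R_{\Xi}}(\xi_{\kappa})\bigr)=\frac{\pi^{d/2}}{\Gamma(d/2+1)}(\kappa R_{\Xi})^{d}$ to obtain exactly (\ref{keybound}). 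The only genuine obstacle is coming up with the ball: once one thinks to dilate the inscribed ball $B_{R_{\Xi}}(\tilde{\xi})$ toward $\xi^{\ast}$ by the factor $\kappa$ (which both keeps it inside $\Xi$ and pulls it within $\kappa(R_{\Xi}+D_{\Xi})$ of $\xi^{\ast}$), the remaining estimates are routine.
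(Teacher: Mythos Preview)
Your proposal is correct and follows essentially the same approach as the paper: both arguments define the same dilated ball $B_{\kappa R_{\Xi}}(\xi_{\kappa})$ with $\xi_{\kappa}=(1-\kappa)\xi^{\ast}+\kappa\tilde{\xi}$, show it lies in $\Xi$ by convexity, bound $\|\xi-\xi^{\ast}\|\le\kappa(R_{\Xi}+D_{\Xi})$ on that ball, apply the Lipschitz estimate, and finish with the volume formula. The only cosmetic difference is that the paper first factors out $\exp\bigl(\tfrac{1}{\kappa}\max_{\xi}g(x,\xi)\bigr)$ and then bounds the residual integral, whereas you bound $\exp(g(x,\xi)/\kappa)$ pointwise before integrating; the content is the same.
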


\begin{proof}
We have

\begin{equation}
\begin{array}{rcl}
\int_{\Xi}\exp\left(\frac{g(x,\xi)}{\kappa}\right)d\xi & = & \exp\left(\frac{1}{\kappa}\max_{\xi\in\Xi}g(x,\xi)\right)\int_{\Xi}\exp\left(-\frac{1}{\kappa}(\max_{\xi\in\Xi}g(x,\xi)-g(x,\xi))\right)d\xi\\
 & \geq & \exp\left(\frac{1}{\kappa}\max_{\xi\in\Xi}g(x,\xi)\right)\int_{\Xi}\exp\left(-\frac{L_{g,\,\Xi}}{\kappa}\left\Vert \xi^{\ast}(x)-\xi\right\Vert \right)d\xi,
\end{array}\label{keybound1}
\end{equation}
where $\xi^{\ast}(x)\in\arg\max_{\xi\in\Xi}g(x,\,\xi)$, and the last
inequality follows by Assumption \textbf{\textcolor{black}{A5}}.

Define $\xi_{\kappa}\triangleq\kappa\tilde{\xi}+(1-\kappa)\xi^{\ast}(x)$.
Since $\Xi$ is convex by Assumption \textbf{\textcolor{black}{A3}},
we deduce $B_{\kappa R_{\Xi}}(\xi_{\kappa})=(1-\kappa)\xi^{\ast}(x)+\kappa B_{R_{\Xi}}(\tilde{\xi})\subseteq\Xi$,
which implies that, for any $\xi\in B_{\kappa R_{\Xi}}(\xi_{\kappa}),$
there exists an element $\xi'\in B_{R_{\Xi}}(\tilde{\xi})$ such that
$\xi=\kappa\xi'+(1-\kappa)\xi^{\ast}(x)$. Then, for any $\xi\in B_{\kappa R_{\Xi}}(\xi_{\kappa}),$
we have $\left\Vert \xi^{\ast}(x)-\xi\right\Vert =\kappa\left\Vert \xi'-\xi^{\ast}(x)\right\Vert \leq\kappa\left(\left\Vert \xi'-\tilde{\xi}\right\Vert +\left\Vert \tilde{\xi}-\xi^{\ast}(x)\right\Vert \right)\leq\kappa(R_{\Xi}+D_{\Xi})$.
Therefore, we have

\begin{equation}
\begin{array}{rcl}
\int_{\Xi}\exp\left(-\frac{L_{g,\,\Xi}}{\kappa}\left\Vert \xi^{\ast}(x)-\xi\right\Vert \right)d\xi & \geq & \int_{B_{\kappa R_{\Xi}}(\xi_{\kappa})}\exp\left(-\frac{L_{g,\,\Xi}}{\kappa}\left\Vert \xi^{\ast}(x)-\xi\right\Vert \right)d\xi\\
 & \geq & \exp\left(-L_{g,\,\Xi}(R_{\Xi}+D_{\Xi})\right)\frac{\pi^{d/2}}{\Gamma(d/2+1)}(\kappa R_{\Xi})^{d}.
\end{array}\label{keybound2}
\end{equation}
Combining (\ref{keybound1}) and (\ref{keybound2}), we arrive at
the conclusion. 
\end{proof}
We are ready to give an explicit bound on the gap between the values
of the inner maximization of the regularized Lagrangian and the original
Lagrangian. 
\begin{lem}
\label{maxsupdiff} For any $\kappa\in(0,1]$ and $x\in\mathcal{X}$, 

\begin{equation}
\max_{\varLambda\in\mathcal{V}}\left\{ \left\langle \varLambda,\,G(x)\right\rangle -\kappa\,B\left(\varLambda,\,\varLambda_{u}\right)\right\} \geq\max_{\varLambda\in\mathcal{V}}\left\{ \left\langle \varLambda,\,G(x)\right\rangle \right\} +\bar{\rho}(\theta)\kappa\log(\kappa)d-\kappa\bar{C}\left(\theta\right).\label{maxdiff}
\end{equation}
\end{lem}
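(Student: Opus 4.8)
The plan is to reduce both inner maximizations to one-dimensional problems over the total mass $\rho\in[0,\,\bar{\rho}(\theta)]$, compare them there, and then absorb the discrepancy into the constants $\kappa\log(\kappa)d$ and $\bar{C}(\theta)$. Writing $V_{\kappa}(x)\triangleq\max_{\phi\in\mathcal{P}(\Xi)}\{\mathbb{E}_{\xi\sim\phi}[g(x,\,\xi)]-\kappa\,D(\phi,\,\phi_{u})\}$, Lemma \ref{Vrhophi} gives
\[
\max_{\varLambda\in\mathcal{V}}\{\langle\varLambda,\,G(x)\rangle-\kappa\,B(\varLambda,\,\varLambda_{u})\}=\max_{\rho\in[0,\,\bar{\rho}(\theta)]}\{\rho\,V_{\kappa}(x)-\kappa\,H(\rho,\,\rho_{0})\},
\]
and, since $\langle\varLambda,\,G(x)\rangle=\rho(\varLambda)\,\langle\phi(\varLambda),\,G(x)\rangle$ and $\sup_{\phi\in\mathcal{P}(\Xi)}\langle\phi,\,G(x)\rangle=\max_{\xi\in\Xi}g(x,\,\xi)$ (attained at a point mass, a scaled copy of which lies in $\mathcal{V}$), the unregularized inner maximization is $\max_{\varLambda\in\mathcal{V}}\langle\varLambda,\,G(x)\rangle=\max_{\rho\in[0,\,\bar{\rho}(\theta)]}\rho\,\max_{\xi\in\Xi}g(x,\,\xi)$.

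Next I would lower bound $V_{\kappa}(x)$ using the closed form from Lemma \ref{maxoverphi}, namely $V_{\kappa}(x)=\kappa\log\big(\int_{\Xi}\exp(g(x,\,\xi)/\kappa)\,d\xi\big)-\kappa\log(\mathrm{vol}(\Xi))$, together with the volume estimate of Lemma \ref{importbound}. Applying the increasing map $t\mapsto\kappa\log t$ to that estimate and recognizing $r=\frac{\pi^{d/2}}{\Gamma(d/2+1)}R_{\Xi}^{d}/\mathrm{vol}(\Xi)$ yields
\[
V_{\kappa}(x)\geq\max_{\xi\in\Xi}g(x,\,\xi)-\kappa\,L_{g,\,\Xi}(R_{\Xi}+D_{\Xi})+\kappa\log r+\kappa d\log\kappa.
\]

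To finish, I would evaluate the left-hand side at $\rho=\rho^{\ast}$, where $\rho^{\ast}\in\arg\max_{\rho\in[0,\,\bar{\rho}(\theta)]}\rho\,\max_{\xi\in\Xi}g(x,\,\xi)$ is the maximizer of the unregularized one-dimensional problem, so that $\rho^{\ast}\max_{\xi\in\Xi}g(x,\,\xi)=\max_{\varLambda\in\mathcal{V}}\langle\varLambda,\,G(x)\rangle$. Substituting the bound on $V_{\kappa}(x)$ and using $H(\rho^{\ast},\,\rho_{0})\leq H_{\max}$ gives a lower bound of the form $\max_{\varLambda\in\mathcal{V}}\langle\varLambda,\,G(x)\rangle+\rho^{\ast}\kappa\big(d\log\kappa-L_{g,\,\Xi}(R_{\Xi}+D_{\Xi})+\log r\big)-\kappa H_{\max}$. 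Because $\kappa\in(0,1]$ forces $\log\kappa\leq0$, because $r\leq1$ (the ball $B_{R_{\Xi}}(\tilde{\xi})$ lies inside $\Xi$) forces $\log r\leq0$, and because $L_{g,\,\Xi}(R_{\Xi}+D_{\Xi})\geq0$, the bracketed quantity multiplying $\rho^{\ast}$ is nonpositive, so replacing $\rho^{\ast}$ by the larger value $\bar{\rho}(\theta)$ only decreases it; collecting terms reproduces exactly $\bar{\rho}(\theta)\kappa\log(\kappa)d-\kappa\bar{C}(\theta)$ with $\bar{C}(\theta)=\bar{\rho}(\theta)L_{g,\,\Xi}(R_{\Xi}+D_{\Xi})-\bar{\rho}(\theta)\log r+H_{\max}$.

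Essentially all the analytic difficulty has already been discharged in Lemma \ref{importbound}, so the remaining steps are bookkeeping. The one point demanding care is the sign tracking in the last step: one must notice that the three residual terms are each nonpositive after multiplication by $\rho^{\ast}$, so the correct move is to \emph{enlarge} $\rho^{\ast}$ to $\bar{\rho}(\theta)$ rather than bound it above directly, and this is precisely what makes the constant $\bar{C}(\theta)$ emerge in the stated form.
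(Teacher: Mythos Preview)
Your proof is correct and follows essentially the same approach as the paper: both reduce via Lemma~\ref{Vrhophi} to a one-dimensional problem in $\rho$, lower-bound $V_{\kappa}(x)$ by combining Lemmas~\ref{maxoverphi} and~\ref{importbound}, and then compare. The only cosmetic difference is that the paper splits explicitly into the two cases $\max_{\xi}g(x,\xi)>0$ (choosing $\rho=\bar{\rho}(\theta)$) and $\max_{\xi}g(x,\xi)\le 0$ (choosing $\rho=0$), whereas you handle both at once by evaluating at $\rho^{\ast}\in\arg\max_{\rho}\rho\max_{\xi}g(x,\xi)$ and then exploiting the nonpositivity of the bracketed residual to pass from $\rho^{\ast}$ to $\bar{\rho}(\theta)$; your unified treatment is slightly cleaner but substantively identical.
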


\begin{proof}
Applying (\ref{keybound}) to bound the term $\log\left(\intop_{\Xi}\exp\left(g(x,\xi)/\kappa\right)d\xi\right)$
in the right hand side of (\ref{innermaxoverprob}), we obtain 

\[
\begin{array}{rcl}
 &  & \max_{\phi\in\mathcal{\mathcal{P}}(\Xi)}\left\{ \mathbb{E}_{\xi\sim\phi}\left[g\left(x,\,\xi\right)\right]-\kappa\,D\left(\phi,\,\phi_{u}\right)\right\} \\
 & \geq & \kappa\log\left(\exp\left(\frac{1}{\kappa}\max_{\xi\in\Xi}g(x,\xi)\right)\exp\left(-L_{g,\,\Xi}(R_{\Xi}+D_{\Xi})\right)\frac{\pi^{d/2}}{\Gamma(d/2+1)}(\kappa R_{\Xi})^{d}\right)-\kappa\log(\text{vol}\left(\Xi\right))\\
 & = & \max_{\phi\in\mathcal{\mathcal{P}}(\Xi)}\mathbb{E}_{\xi\sim\phi}\left[g\left(x,\,\xi\right)\right]+\kappa\log(\kappa)d-\kappa L_{g,\,\Xi}(R_{\Xi}+D_{\Xi})+\kappa\log\left(r\right),
\end{array}
\]
where the last equality follows by $\max_{\xi\in\Xi}g(x,\xi)=\max_{\phi\in\mathcal{\mathcal{P}}(\Xi)}\mathbb{E}_{\xi\sim\phi}\left[g\left(x,\,\xi\right)\right]$.
It follows from Lemma \ref{Vrhophi} that

\[
\begin{array}{rll}
 & \max_{\varLambda\in\mathcal{V}} & \left\{ \left\langle \varLambda,\,G(x)\right\rangle -\kappa\,B\left(\varLambda,\,\varLambda_{u}\right)\right\} \\
\geq & \max_{\rho\in[0,\,\bar{\rho}(\theta)]} & \rho\left[\max_{\phi\in\mathcal{\mathcal{P}}(\Xi)}\mathbb{E}_{\xi\sim\phi}\left[g\left(x,\,\xi\right)\right]+\kappa\log(\kappa)d-\kappa L_{g,\,\Xi}(R_{\Xi}+D_{\Xi})+\kappa\log\left(r\right)\right]\\
 &  & -\kappa H(\rho,\rho_{0}).
\end{array}
\]
To complete the argument, we consider the following two cases: 

Case 1: If $\max_{\phi\in\mathcal{\mathcal{P}}(\Xi)}\mathbb{E}_{\xi\sim\phi}\left[g\left(x,\,\xi\right)\right]>0$,
then $\max_{\varLambda\in\mathcal{V}}\left\{ \left\langle \varLambda,\,G(x)\right\rangle \right\} =\bar{\rho}(\theta)\max_{\phi\in\mathcal{\mathcal{P}}(\Xi)}\mathbb{E}_{\xi\sim\phi}\left[g\left(x,\,\xi\right)\right]$,
and 

\[
\begin{array}{rcl}
 &  & \max_{\rho\in[0,\,\bar{\rho}(\theta)]}\rho\left[\max_{\phi\in\mathcal{\mathcal{P}}(\Xi)}\mathbb{E}_{\xi\sim\phi}\left[g\left(x,\,\xi\right)\right]+\kappa\log(\kappa)d-\kappa L_{g,\,\Xi}(R_{\Xi}+D_{\Xi})+\kappa\log\left(r\right)\right]-\kappa H(\rho,\rho_{0})\\
 & \geq & \bar{\rho}(\theta)\left[\max_{\phi\in\mathcal{\mathcal{P}}(\Xi)}\mathbb{E}_{\xi\sim\phi}\left[g\left(x,\,\xi\right)\right]+\kappa\log(\kappa)d-\kappa L_{g,\,\Xi}(R_{\Xi}+D_{\Xi})+\kappa\log\left(r\right)\right]-\kappa H(\bar{\rho}(\theta),\rho_{0})\\
 & \geq & \max_{\varLambda\in\mathcal{V}}\left\{ \left\langle \varLambda,\,G(x)\right\rangle \right\} +\bar{\rho}(\theta)\kappa\log(\kappa)d-\bar{\rho}(\theta)\kappa L_{g,\,\Xi}(R_{\Xi}+D_{\Xi})+\bar{\rho}(\theta)\kappa\log\left(r\right)-\kappa H_{\max},
\end{array}
\]
where the first inequality follows by the special selection $\rho=\bar{\rho}(\theta)$,
and the second inequality follows because $H_{\max}\geq H(\bar{\rho}(\theta),\rho_{0})$.

Case 2: If $\max_{\phi\in\mathcal{\mathcal{P}}(\Xi)}\mathbb{E}_{\xi\sim\phi}\left[g\left(x,\,\xi\right)\right]\leq0$,
then $\max_{\varLambda\in\mathcal{V}}\left\{ \left\langle \varLambda,\,G(x)\right\rangle \right\} =0$,
and 

\[
\begin{array}{rcl}
 &  & \max_{\rho\in[0,\,\bar{\rho}(\theta)]}\rho\left[\max_{\phi\in\mathcal{\mathcal{P}}(\Xi)}\mathbb{E}_{\xi\sim\phi}\left[g\left(x,\,\xi\right)\right]+\kappa\log(\kappa)d-\kappa L_{g,\,\Xi}(R_{\Xi}+D_{\Xi})+\kappa\log\left(r\right)\right]-\kappa H(\rho,\rho_{0})\\
 & \geq & -\kappa\rho_{0}\\
 & \geq & \max_{\varLambda\in\mathcal{V}}\left\{ \left\langle \varLambda,\,G(x)\right\rangle \right\} +\bar{\rho}(\theta)\kappa\log(\kappa)d-\bar{\rho}(\theta)\kappa L_{g,\,\Xi}(R_{\Xi}+D_{\Xi})+\bar{\rho}(\theta)\kappa\log\left(r\right)-\kappa H_{\max},
\end{array}
\]
where the first inequality follows by the special selection $\rho=0$
and $H(0,\rho_{0})=\rho_{0}$, and the second inequality follows because
$\bar{\rho}(\theta)\kappa\log(\kappa)d-\bar{\rho}(\theta)\kappa L_{g,\,\Xi}(R_{\Xi}+D_{\Xi})+\bar{\rho}(\theta)\kappa\log\left(r\right)\leq0$
and $H_{\max}\geq H(0,\rho_{0})$. The argument is now complete.
\end{proof}
We are in a position to claim that there is an upper bound on $B(\varLambda_{\kappa}(x),\,\varLambda_{u})$
(recall that $\varLambda_{\kappa}\left(x\right)\in\mathcal{V}$ is
an inner maximizer in Problem $\mathbb{SP}_{\kappa}$ for fixed $x\in\mathcal{X}$,
which is defined in (\ref{ReguSoluLambdaRx})). We define

\[
C'(\theta)\triangleq-\bar{\rho}(\theta)\log(\kappa)d+\bar{\rho}(\theta)L_{g,\,\xi}(R_{\Xi}+D_{\Xi})-\bar{\rho}(\theta)\log\left(r\right)+H_{\max}.
\]

\begin{lem}
\label{LemmaQB} For any $\kappa\in(0,1]$, $B(\varLambda_{\kappa}(x),\,\varLambda_{u})\leq C'(\theta)$
for all $x\in\mathcal{X}$.
\end{lem}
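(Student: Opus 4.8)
The plan is to reduce the inner maximization defining $\varLambda_{\kappa}(x)$ to its two-stage form and then exploit the closed-form optimal density together with the volume estimate of Lemma~\ref{importbound}. First I would invoke Lemma~\ref{Vrhophi} and Lemma~\ref{maxoverphi} to write $\varLambda_{\kappa}(x)=\rho^{\ast}\,\phi_{\kappa}(x)$, where $\rho^{\ast}=\rho(\varLambda_{\kappa}(x))\in[0,\bar{\rho}(\theta)]$ solves the outer scalar maximization and $\phi_{\kappa}(x)=\phi(\varLambda_{\kappa}(x))$ is the inner maximizer over $\mathcal{P}(\Xi)$, given explicitly by $[\phi_{\kappa}(x)](\xi)=\exp(g(x,\xi)/\kappa)/\intop_{\Xi}\exp(g(x,\xi')/\kappa)\,d\xi'$. (The correspondence $\varLambda_{\kappa}(x)\leftrightarrow(\rho^{\ast},\phi_{\kappa}(x))$ and the decoupling of the two stages are exactly what the proof of Lemma~\ref{Vrhophi} establishes, using Lemma~\ref{lem:prox_preliminary}(i); the degenerate case $\rho^{\ast}=0$ gives $\varLambda_{\kappa}(x)=0$ and $B(0,\varLambda_{u})=\rho_{0}\le H_{\max}\le C'(\theta)$, so I may assume $\rho^{\ast}>0$.) Since $\rho(\varLambda_{u})=\rho_{0}$ and $\phi(\varLambda_{u})=\phi_{u}$, Theorem~\ref{propertiesofB}(i) gives $B(\varLambda_{\kappa}(x),\varLambda_{u})=\rho^{\ast}\,D(\phi_{\kappa}(x),\phi_{u})+H(\rho^{\ast},\rho_{0})$. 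The term $H(\rho^{\ast},\rho_{0})$ is bounded by $H_{\max}$ by definition since $\rho^{\ast}\in[0,\bar{\rho}(\theta)]$, and $\rho^{\ast}\,D(\phi_{\kappa}(x),\phi_{u})\le\bar{\rho}(\theta)\,D(\phi_{\kappa}(x),\phi_{u})$.

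The crux is then to show $D(\phi_{\kappa}(x),\phi_{u})\le L_{g,\,\Xi}(R_{\Xi}+D_{\Xi})-\log r-d\log\kappa$. Substituting the explicit form of $\phi_{\kappa}(x)$ into $D(\phi_{\kappa}(x),\phi_{u})=\intop_{\Xi}\log\!\big([\phi_{\kappa}(x)](\xi)\,\text{vol}(\Xi)\big)[\phi_{\kappa}(x)](\xi)\,d\xi$ yields $D(\phi_{\kappa}(x),\phi_{u})=\tfrac{1}{\kappa}\mathbb{E}_{\phi_{\kappa}(x)}[g(x,\xi)]-\log\!\big(\intop_{\Xi}\exp(g(x,\xi')/\kappa)\,d\xi'\big)+\log\text{vol}(\Xi)$. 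I would bound the first term by $\tfrac{1}{\kappa}\max_{\xi\in\Xi}g(x,\xi)$ and lower-bound the log-partition term via Lemma~\ref{importbound}; using $\frac{\pi^{d/2}}{\Gamma(d/2+1)}R_{\Xi}^{d}=r\,\text{vol}(\Xi)$, the two $\tfrac{1}{\kappa}\max_{\xi}g(x,\xi)$ contributions cancel exactly and the two $\log\text{vol}(\Xi)$ terms cancel, leaving precisely the claimed bound on $D(\phi_{\kappa}(x),\phi_{u})$. Combining this with the previous paragraph gives $B(\varLambda_{\kappa}(x),\varLambda_{u})\le\bar{\rho}(\theta)\big(L_{g,\,\Xi}(R_{\Xi}+D_{\Xi})-\log r-d\log\kappa\big)+H_{\max}=C'(\theta)$.

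I do not anticipate a genuine obstacle here. The only points requiring care are: confirming that the two-stage reduction really supplies $\varLambda_{\kappa}(x)=\rho^{\ast}\phi_{\kappa}(x)$ with $\phi(\varLambda_{\kappa}(x))$ equal to the first-stage maximizer $\phi_{\kappa}(x)$ (immediate from the decoupling argument inside the proof of Lemma~\ref{Vrhophi}), and, more importantly, tracking the cancellation of the $\tfrac{1}{\kappa}\max_{\xi}g(x,\xi)$ terms — this cancellation is what makes the resulting bound simultaneously independent of $x$ and finite for every fixed $\kappa\in(0,1]$, and it is precisely why Lemma~\ref{importbound} is stated in the form it is.
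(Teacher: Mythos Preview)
Your argument is correct, but it proceeds along a different route from the paper. The paper's proof is a three-line algebraic rearrangement: from the definition of $\varLambda_{\kappa}(x)$ as the maximizer one has $\kappa\,B(\varLambda_{\kappa}(x),\varLambda_{u})=\langle\varLambda_{\kappa}(x),G(x)\rangle-\max_{\varLambda\in\mathcal{V}}\{\langle\varLambda,G(x)\rangle-\kappa B(\varLambda,\varLambda_{u})\}$, and then $\langle\varLambda_{\kappa}(x),G(x)\rangle\le\max_{\varLambda\in\mathcal{V}}\langle\varLambda,G(x)\rangle$ together with the gap bound of Lemma~\ref{maxsupdiff} immediately yields $\kappa\,B(\varLambda_{\kappa}(x),\varLambda_{u})\le\kappa\,C'(\theta)$. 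Your approach instead unpacks the two-stage structure, uses the explicit density $\phi_{\kappa}(x)$ from Lemma~\ref{maxoverphi}, and bounds $D(\phi_{\kappa}(x),\phi_{u})$ directly via Lemma~\ref{importbound}; the decomposition $B=\rho^{\ast}D+H$ and the bound $\rho^{\ast}\le\bar{\rho}(\theta)$, $H(\rho^{\ast},\rho_{0})\le H_{\max}$ then finish. In effect you are re-deriving, term by term, the content of Lemma~\ref{maxsupdiff} that the paper simply invokes. The paper's route is shorter and highlights that Lemma~\ref{LemmaQB} is really a corollary of the approximation-gap lemma; your route is more explicit and makes transparent exactly where each constant in $C'(\theta)$ comes from (and does not need the case split on the sign of $\max_{\xi}g(x,\xi)$ that appears inside the proof of Lemma~\ref{maxsupdiff}).
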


\begin{proof}
From the definition of $\varLambda_{\kappa}(x),$ we have that 

\[
\max_{\varLambda\in\mathcal{V}}\left\{ \left\langle \varLambda,\,G(x)\right\rangle -\kappa\,B\left(\varLambda,\,\varLambda_{u}\right)\right\} =\left\langle \varLambda_{\kappa}(x),\,G(x)\right\rangle -\kappa\,B\left(\varLambda_{\kappa}(x),\,\varLambda_{u}\right).
\]
Then, 

\[
\begin{array}{rcl}
\kappa B(\varLambda_{\kappa}(x),\,\varLambda_{u}) & = & -\max_{\varLambda\in\mathcal{V}}\left\{ \left\langle \varLambda,\,G(x)\right\rangle -\kappa\,B\left(\varLambda,\,\varLambda_{u}\right)\right\} +\left\langle \varLambda_{\kappa}(x),\,G(x)\right\rangle \\
 & \leq & -\max_{\varLambda\in\mathcal{V}}\left\{ \left\langle \varLambda,\,G(x)\right\rangle -\kappa\,B\left(\varLambda,\,\varLambda_{u}\right)\right\} +\max_{\varLambda\in\mathcal{V}}\left\{ \left\langle \varLambda,\,G(x)\right\rangle \right\} \\
 & \leq & -\bar{\rho}(\theta)\kappa\log(\kappa)d+\bar{\rho}(\theta)\kappa L_{g,\,\Xi}(R_{\Xi}+D_{\Xi})-\bar{\rho}(\theta)\kappa\log\left(r\right)+\kappa H_{\max},
\end{array}
\]
where the first inequality follows because $\max_{\varLambda\in\mathcal{V}}\left\{ \left\langle \varLambda,\,G(x)\right\rangle \right\} \geq\left\langle \varLambda_{\kappa}(x),\,G(x)\right\rangle $,
and the second inequality follows from Lemma \ref{maxsupdiff}. Since
$\kappa>0$ we obtain $B(\varLambda_{\kappa}(x),\,\varLambda_{u})\leq C'(\theta)$.
\end{proof}
\begin{rem}
\label{boundedB} For our convergence analysis to go through, the
prox function term must be bounded. Notice that the inner maximizer
over $\varLambda\in\mathcal{V}$ in Problem $\mathbb{SP}$ can be
a point measure, in which case $B(\varLambda,\,\varLambda_{u})$ would
be unbounded. From Lemma \ref{LemmaQB}, there is an upper bound on
$B(\varLambda_{\kappa}(x),\,\varLambda_{u})$ for all $x\in\mathcal{X}$.
Actually, this observation is the essential reason why we solve the
regularized saddle-point Problem $\mathbb{SP}_{\kappa}$ rather than
the original saddle-point Problem $\mathbb{SP}$. 
\end{rem}

\subsection{\label{subsec:Convex-concave analysis} Convex-Concave Analysis}

The goal of this subsection is to analyze the primal-dual iterates
for the regularized saddle-point problem, as well as the averaged
primal and dual solutions $\overline{x}_{K}$ and $\overline{\varLambda}_{K}$.
The main result of this subsection is a bound on the difference $L_{\kappa}(\overline{x}_{K},\,\varLambda)-L_{\kappa}(x,\,\overline{\varLambda}_{K})$,
which paves the way for the convergence proof of our inexact primal-dual
algorithm through appropriate choice of $x$ and $\varLambda$. 

The upcoming Lemmas \ref{mu-mut} and \ref{xt-x} form the basis for
our analysis of the averaged primal and dual solutions $\overline{x}_{K}$
and $\overline{\varLambda}_{K}$. 
\begin{lem}
\label{mu-mut} For any $\varLambda\in\mathcal{M}^{cd}\left(\Xi\right)\cap\mathcal{V}$
and for all $k=0,\,1,\ldots,\,K-1,$

\[
\begin{array}{rcl}
 &  & \left\langle \varLambda-\varLambda_{k},\,G(x_{k})\right\rangle -\kappa\,(B\left(\varLambda,\,\varLambda_{u}\right)-B\left(\varLambda_{k},\,\varLambda_{u}\right))\\
 & \leq & \frac{1}{\gamma}(B(\varLambda,\,\varLambda_{k})-B(\varLambda,\,\varLambda_{k+1}))+\kappa(B\left(\varLambda_{k},\,\varLambda_{u}\right)-B(\varLambda_{k+1},\varLambda_{u}))+\frac{\gamma\bar{\rho}(\theta)}{2}G_{\max}^{2}.
\end{array}
\]
\end{lem}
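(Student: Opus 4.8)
The plan is to prove Lemma~\ref{mu-mut} by the usual mirror-descent / three-point argument for Bregman proximal updates, but carried out in the Banach space $\mathcal{M}(\Xi)$ with prox function $B(\cdot\,,\varLambda_u)$; the restriction $\varLambda\in\mathcal{M}^{cd}(\Xi)\cap\mathcal{V}$ is exactly what makes the relevant Gateaux differentials well defined.

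First I would record that the dual update (\ref{dualupdate}) is the minimizer over the convex set $\mathcal{V}$ of the convex functional $Q(\varLambda)\triangleq\gamma\big(\langle\varLambda-\varLambda_k,\,-G(x_k)\rangle+\kappa\,B(\varLambda,\,\varLambda_u)\big)+B(\varLambda,\,\varLambda_k)$, and that, by the closed-form expression in Theorem~\ref{dualupdatesolution}, the iterates $\varLambda_k,\varLambda_{k+1}$ have continuous, strictly positive densities on the compact set $\Xi$, hence lie in $\mathcal{M}^{cd}(\Xi)$. For any test point $\varLambda\in\mathcal{M}^{cd}(\Xi)\cap\mathcal{V}$ the whole segment $[\varLambda_{k+1},\varLambda]$ stays in $\mathcal{V}$, and along the direction $\varLambda-\varLambda_{k+1}$ the functional $Q$ is Gateaux differentiable (the linear term trivially; the two $B$-terms by the computation behind Lemma~\ref{Bregmanview}, which applies since $\varLambda_{k+1}$ has density bounded away from zero). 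The first-order optimality condition then reads
\[
\gamma\,\langle\varLambda-\varLambda_{k+1},\,-G(x_k)\rangle+\gamma\kappa\,\delta B(\varLambda_{k+1};\,\varLambda-\varLambda_{k+1})+\delta\!\left[B(\cdot\,,\varLambda_k)\right]\!(\varLambda_{k+1};\,\varLambda-\varLambda_{k+1})\;\ge\;0 .
\]

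Next I would convert the two differential terms into divergences using Lemma~\ref{Bregmanview}, which exhibits $B$ as the Bregman divergence generated by $\psi\triangleq B(\cdot\,,\varLambda_u)$. Expanding $B(\varLambda,\varLambda_k)$, $B(\varLambda_{k+1},\varLambda_k)$ and $B(\varLambda,\varLambda_{k+1})$ through Lemma~\ref{Bregmanview} and using linearity of $\delta B(\varGamma;\cdot)$ gives the three-point identity $\delta\!\left[B(\cdot\,,\varLambda_k)\right]\!(\varLambda_{k+1};\,\varLambda-\varLambda_{k+1})=B(\varLambda,\varLambda_k)-B(\varLambda,\varLambda_{k+1})-B(\varLambda_{k+1},\varLambda_k)$, and likewise $\delta B(\varLambda_{k+1};\,\varLambda-\varLambda_{k+1})=B(\varLambda,\varLambda_u)-B(\varLambda_{k+1},\varLambda_u)-B(\varLambda,\varLambda_{k+1})\le B(\varLambda,\varLambda_u)-B(\varLambda_{k+1},\varLambda_u)$ by nonnegativity of $B$ (Theorem~\ref{propertiesofB}(ii)). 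Substituting into the optimality condition and dividing by $\gamma$ yields
\[
\langle\varLambda-\varLambda_{k+1},\,G(x_k)\rangle\;\le\;\kappa\big(B(\varLambda,\varLambda_u)-B(\varLambda_{k+1},\varLambda_u)\big)+\tfrac1\gamma\big(B(\varLambda,\varLambda_k)-B(\varLambda,\varLambda_{k+1})\big)-\tfrac1\gamma B(\varLambda_{k+1},\varLambda_k).
\]

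Finally I would add $\langle\varLambda_{k+1}-\varLambda_k,\,G(x_k)\rangle$ to both sides and absorb the leftover negative divergence term. By H\"older's inequality $\langle\varLambda_{k+1}-\varLambda_k,\,G(x_k)\rangle\le\|\varLambda_{k+1}-\varLambda_k\|_{TV}\,\|G(x_k)\|_{\mathcal{C}(\Xi)}\le G_{\max}\|\varLambda_{k+1}-\varLambda_k\|_{TV}$, and by the generalized Pinsker inequality Theorem~\ref{propertiesofB}(v) (applicable because $\varLambda_k,\varLambda_{k+1}\in\mathcal{V}$ both have total variation at most $\bar{\rho}(\theta)$) we have $\tfrac1\gamma B(\varLambda_{k+1},\varLambda_k)\ge\tfrac1{2\gamma\bar{\rho}(\theta)}\|\varLambda_{k+1}-\varLambda_k\|_{TV}^2$; hence $\langle\varLambda_{k+1}-\varLambda_k,\,G(x_k)\rangle-\tfrac1\gamma B(\varLambda_{k+1},\varLambda_k)\le\max_{t\ge0}\{G_{\max}t-\tfrac{t^2}{2\gamma\bar{\rho}(\theta)}\}=\tfrac{\gamma\bar{\rho}(\theta)}{2}G_{\max}^2$. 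Combining with the previous display and subtracting $\kappa\big(B(\varLambda,\varLambda_u)-B(\varLambda_k,\varLambda_u)\big)$ from both sides (the $\kappa B(\varLambda,\varLambda_u)$ terms cancel, leaving $\kappa\big(B(\varLambda_k,\varLambda_u)-B(\varLambda_{k+1},\varLambda_u)\big)$ on the right) produces exactly the claimed inequality. The only genuinely delicate point is the first step: justifying the variational inequality and, implicitly, the interchange of limit and integral needed to evaluate the Gateaux differential of $B(\cdot\,,\varLambda_u)$ along $\varLambda-\varLambda_{k+1}$ — this is precisely why the lemma is stated only for $\varLambda\in\mathcal{M}^{cd}(\Xi)\cap\mathcal{V}$ and why Theorem~\ref{dualupdatesolution} is needed to place the iterates in $\mathcal{M}^{cd}(\Xi)$; everything after that is bookkeeping.
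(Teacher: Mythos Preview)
Your proposal is correct and follows essentially the same approach as the paper: write the first-order optimality condition for the dual update, convert the Gateaux-differential terms into $B$-divergences via the Bregman three-point identity (Lemma~\ref{Bregmanview}), then apply the generalized Pinsker inequality (Theorem~\ref{propertiesofB}(v)) together with Young's inequality to absorb the cross term $\langle\varLambda_{k+1}-\varLambda_k,G(x_k)\rangle-\tfrac1\gamma B(\varLambda_{k+1},\varLambda_k)$. The only cosmetic difference is that the paper writes the optimality condition with the explicit densities $\log(\varLambda_{k+1}/\varLambda_u)$ and $\log(\varLambda_{k+1}/\varLambda_k)$ and keeps the nonnegative term $\kappa\,B(\varLambda,\varLambda_{k+1})$ until the final line before dropping it, whereas you invoke the abstract three-point identity and drop that term one step earlier; the resulting inequalities are identical.
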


\begin{proof}
Recall that $\varLambda_{k+1}$ solves the convex optimization problem
(\ref{dualupdate}). From \cite[Section 7.4, Theorem 2]{luenberger1997optimization}
and Lemma \ref{Bregmanview}, the optimality of $\varLambda_{k+1}$
implies that for any $\varLambda\in\mathcal{M}^{cd}\left(\Xi\right)\cap\mathcal{V}$,

\[
\left\langle \gamma[-G(x_{k})+\kappa\log(\varLambda_{k+1}/\varLambda_{u})]+\log(\varLambda_{k+1}/\varLambda_{k}),\,\varLambda_{k+1}-\varLambda\right\rangle \leq0,
\]
 which is equivalent to 

\begin{eqnarray}
 &  & -\left\langle \log(\varLambda_{k+1}/\varLambda_{k}),\,\varLambda\right\rangle \nonumber \\
 & \leq & \gamma\left\langle G(x_{k}),\,\varLambda_{k+1}-\varLambda\right\rangle -\gamma\kappa\left\langle \log(\varLambda_{k+1}/\varLambda_{u}),\,\varLambda_{k+1}-\varLambda\right\rangle -\left\langle \log(\varLambda_{k+1}/\varLambda_{k}),\,\varLambda_{k+1}\right\rangle \nonumber \\
 & = & \gamma\left\langle G(x_{k}),\,\varLambda_{k+1}-\varLambda\right\rangle -\gamma\kappa[B(\varLambda_{k+1},\,\varLambda_{u})-B(\varLambda,\,\varLambda_{u})+B(\varLambda,\,\varLambda_{k+1})]\nonumber \\
 &  & -[B(\varLambda_{k+1},\,\varLambda_{k})+\left\langle \varLambda_{k+1}-\varLambda_{k},\,1\right\rangle ].\label{rearrange}
\end{eqnarray}
Moreover, we have

\begin{equation}
-\left\langle \log(\varLambda_{k+1}/\varLambda_{k}),\,\varLambda\right\rangle =B(\varLambda,\,\varLambda_{k+1})-B(\varLambda,\,\varLambda_{k})-\left\langle \varLambda_{k+1}-\varLambda_{k},\,1\right\rangle .\label{eq:D-D}
\end{equation}
From (\ref{rearrange}) and (\ref{eq:D-D}), by rearranging terms
we obtain

\begin{eqnarray}
 &  & \left\langle G(x_{k}),\,\varLambda-\varLambda_{k}\right\rangle -\kappa\,(B\left(\varLambda,\,\varLambda_{u}\right)-B\left(\varLambda_{k},\,\varLambda_{u}\right))\nonumber \\
 & \leq & \frac{1}{\gamma}B(\varLambda,\,\varLambda_{k})-(\kappa+\frac{1}{\gamma})B(\varLambda,\,\varLambda_{k+1})+\kappa(B\left(\varLambda_{k},\,\varLambda_{u}\right)-B(\varLambda_{k+1},\varLambda_{u}))\nonumber \\
 &  & -\frac{1}{\gamma}B(\varLambda_{k+1},\varLambda_{k})+\left\langle G(x_{k}),\,\varLambda_{k+1}-\varLambda_{k}\right\rangle \nonumber \\
 & \leq & \frac{1}{\gamma}B(\varLambda,\,\varLambda_{k})-(\kappa+\frac{1}{\gamma})B(\varLambda,\,\varLambda_{k+1})+\kappa(B\left(\varLambda_{k},\,\varLambda_{u}\right)-B(\varLambda_{k+1},\varLambda_{u}))\nonumber \\
 &  & -\frac{1}{2\gamma\bar{\rho}(\theta)}\left\Vert \varLambda_{k+1}-\varLambda_{k}\right\Vert _{TV}^{2}+\left\langle G(x_{k}),\,\varLambda_{k+1}-\varLambda_{k}\right\rangle \nonumber \\
 & \leq & \frac{1}{\gamma}B(\varLambda,\,\varLambda_{k})-(\kappa+\frac{1}{\gamma})B(\varLambda,\,\varLambda_{k+1})+\kappa(B\left(\varLambda_{k},\,\varLambda_{u}\right)-B(\varLambda_{k+1},\varLambda_{u}))+\frac{\gamma\bar{\rho}(\theta)}{2}\left\Vert G(x_{k})\right\Vert _{\mathcal{C}\left(\Xi\right)}^{2}\nonumber \\
 & \leq & \frac{1}{\gamma}(B(\varLambda,\,\varLambda_{k})-B(\varLambda,\,\varLambda_{k+1}))+\kappa(B\left(\varLambda_{k},\,\varLambda_{u}\right)-B(\varLambda_{k+1},\varLambda_{u}))+\frac{\gamma\bar{\rho}(\theta)}{2}G_{\max}^{2}\text{,}\label{key1}
\end{eqnarray}
where the second inequality holds by $B(\varLambda_{k+1},\varLambda_{k})\geq\frac{1}{2\bar{\rho}(\theta)}\left\Vert \varLambda_{k+1}-\varLambda_{k}\right\Vert _{TV}^{2}$
from Theorem \ref{propertiesofB}(iii), the third inequality holds
by Young's inequality, and the last inequality holds because $\kappa B(\varLambda,\,\varLambda_{k+1})$
is non-negative. 
\end{proof}
To continue, we define an auxiliary sequence denoted as $\tilde{x}_{k}\in\mathcal{X}$
for $k\geq0$. We initialize $\tilde{x}_{0}=x_{0}$, and then for
all $k\geq0$ we set

\begin{equation}
\tilde{x}_{k+1}=\arg\min_{x\in\mathcal{X}}\left[\gamma\langle x-\tilde{x}_{k},\,-\varepsilon_{k}\rangle+\frac{1}{2}\|x-\tilde{x}_{k}\|^{2}\right].\label{hatx}
\end{equation}
The optimality of $x_{k+1}$ in (\ref{inexactprimalupdate}) and the
optimality of $\tilde{x}_{k+1}$ in (\ref{hatx}) will be used together
to analyze the primal iterate $x_{k}$ in the next result. This auxiliary
sequence serves as an intermediate term which will only appear in
the following lemma.
\begin{lem}
\label{xt-x} For any $x\in\mathcal{X}$ and for all $k=0,\,1,\ldots,\,K-1$,
\end{lem}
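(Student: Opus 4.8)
The plan is to derive the stated per-iteration primal inequality from the first-order (variational-inequality) optimality conditions of the two Euclidean prox subproblems (\ref{inexactprimalupdate}) and (\ref{hatx}), combined with convexity of $f$ and of $x\mapsto\langle\varLambda_{k},G(x)\rangle$, plus careful bookkeeping of the approximation error $\varepsilon_{k}$. Write $g_{k}\triangleq\nabla f(x_{k})+\langle\varLambda_{k},\nabla G(x_{k})\rangle=\nabla_{x}L(x_{k},\varLambda_{k})$. Since $\mathcal{G}_{k}(x_{k},\varLambda_{k})=\langle\varLambda_{k},\nabla G(x_{k})\rangle+\varepsilon_{k}$, update (\ref{inexactprimalupdate}) is a prox step from $x_{k}$ along $g_{k}+\varepsilon_{k}$, so its optimality condition is $\langle\gamma(g_{k}+\varepsilon_{k})+(x_{k+1}-x_{k}),\,x-x_{k+1}\rangle\ge0$ for all $x\in\mathcal{X}$.

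First I would expand $\langle x_{k+1}-x_{k},\,x-x_{k+1}\rangle$ with the three-point identity $2\langle a,b\rangle=\|a\|^{2}+\|b\|^{2}-\|a-b\|^{2}$ to get a telescoping estimate for $\gamma\langle g_{k}+\varepsilon_{k},\,x_{k+1}-x\rangle$ involving $\tfrac12\|x-x_{k}\|^{2}-\tfrac12\|x-x_{k+1}\|^{2}-\tfrac12\|x_{k+1}-x_{k}\|^{2}$; then add and subtract $x_{k}$ and apply Young's inequality $\gamma\langle g_{k},x_{k}-x_{k+1}\rangle\le\tfrac{\gamma^{2}}{2}\|g_{k}\|^{2}+\tfrac12\|x_{k+1}-x_{k}\|^{2}$ to cancel the $-\tfrac12\|x_{k+1}-x_{k}\|^{2}$ term. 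Convexity of $f$ (\textbf{A2}) and of $x\mapsto g(x,\xi)$ for each $\xi$ (\textbf{A4}) together with $\varLambda_{k}\ge0$ yield $f(x_{k})-f(x)+\langle\varLambda_{k},G(x_{k})-G(x)\rangle\le\langle g_{k},x_{k}-x\rangle$, which is the quantity that feeds into the later convex--concave analysis; and $\|g_{k}\|\le L_{f}+\bar{\rho}(\theta)L_{g,\mathcal{X}}$ follows from $\|\nabla f\|\le L_{f}$, the uniform bound $\|\nabla_{x}g(\cdot,\xi)\|\le L_{g,\mathcal{X}}$, and $\|\varLambda_{k}\|_{TV}\le\bar{\rho}(\theta)$ since $\varLambda_{k}\in\mathcal{V}$.

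There remain two error contributions. The term $\gamma\langle\varepsilon_{k},x_{k}-x_{k+1}\rangle$ I would bound by $\gamma\|\varepsilon_{k}\|\,\|x_{k+1}-x_{k}\|$ using the prox-step estimate $\|x_{k+1}-x_{k}\|\le\gamma\|g_{k}+\varepsilon_{k}\|\le\gamma(L_{f}+\bar{\rho}(\theta)L_{g,\mathcal{X}})+\gamma\|\varepsilon_{k}\|$ and one more Young's inequality; this produces the $\tfrac32\gamma^{2}\|\varepsilon_{k}\|^{2}$ term and an extra $\tfrac{\gamma^{2}}{2}(L_{f}+\bar{\rho}(\theta)L_{g,\mathcal{X}})^{2}$ that merges into the final $(L_{f}+\bar{\rho}(\theta)L_{g,\mathcal{X}})^{2}$ coefficient. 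The remaining cross term, of the form $\gamma\langle\varepsilon_{k},\,x-x_{k}\rangle$ with $x$ the comparison point, is precisely where the auxiliary sequence $\tilde{x}_{k}$ enters: I would invoke the optimality condition of (\ref{hatx}) for $\tilde{x}_{k+1}$ --- again a three-point identity, now with the single direction $-\varepsilon_{k}$ --- to rewrite this term as a telescoping difference $\tfrac12\|x-\tilde{x}_{k}\|^{2}-\tfrac12\|x-\tilde{x}_{k+1}\|^{2}$ plus a residual controlled by the compactness of $\mathcal{X}$ (the source of the $\sqrt{D_{\mathcal{X}}}$ factor). Since $\tilde{x}_{0}=x_{0}$, summing the telescoping difference later contributes the $D_{\mathcal{X}}$ and $\tfrac{\sqrt{D_{\mathcal{X}}}}{K}\sum_{k}\|\varepsilon_{k}\|$ terms to Theorem \ref{PSMDthm}. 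Collecting all terms yields the claimed inequality.

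I expect the error bookkeeping of the third paragraph to be the main obstacle: the $\langle\varepsilon_{k},\cdot\rangle$ contributions must be split into exactly the right pieces so that one part is absorbed into the $\gamma^{2}\|\varepsilon_{k}\|^{2}$ and $\gamma^{2}\|g_{k}\|^{2}$ terms with the precise constants stated, while the part depending on the comparison point is routed through the $\tilde{x}_{k}$-recursion rather than a cruder diameter bound, so that after averaging over $k$ the error dependence in Theorem \ref{PSMDthm} comes out as $\tfrac{\sqrt{D_{\mathcal{X}}}}{K}\sum_{k}\|\varepsilon_{k}\|+\tfrac{3\gamma}{2K}\sum_{k}\|\varepsilon_{k}\|^{2}$. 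Everything else is routine three-point/Young manipulation for Euclidean mirror descent.
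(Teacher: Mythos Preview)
Your plan is correct and follows the paper's proof closely. The one substantive difference is in how the term $\langle x_k-x_{k+1},\,g_k+\varepsilon_k\rangle-\tfrac{1}{2\gamma}\|x_{k+1}-x_k\|^2$ is handled. Rather than splitting $g_k$ and $\varepsilon_k$ and then invoking the nonexpansiveness bound $\|x_{k+1}-x_k\|\le\gamma\|g_k+\varepsilon_k\|$, the paper keeps them together and applies Young once to obtain $\langle x_k-x_{k+1},\,g_k+\varepsilon_k\rangle-\tfrac{1}{2\gamma}\|x_{k+1}-x_k\|^2\le\tfrac{\gamma}{2}\|g_k+\varepsilon_k\|^2$, followed by the elementary bound $\tfrac12\|g_k+\varepsilon_k\|^2\le\|g_k\|^2+\|\varepsilon_k\|^2$. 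This gives the contribution $\gamma(L_f+\bar\rho(\theta)L_{g,\mathcal X})^2+\gamma\|\varepsilon_k\|^2$ from the $x_k$-step. The $\tilde x_k$-step then supplies, exactly as you say, the second telescoping term together with an additional $\tfrac{\gamma}{2}\|\varepsilon_k\|^2$ and the $\sqrt{D_{\mathcal X}}\|\varepsilon_k\|$ residual from $\langle\varepsilon_k,x_k-\tilde x_k\rangle$; summing the two $\|\varepsilon_k\|^2$ pieces gives the stated $\tfrac{3\gamma}{2}$.

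Your proposed split already produces $\tfrac{3\gamma}{2}\|\varepsilon_k\|^2$ at the $x_k$-step, and the $\tilde x_k$-analysis (whose Young step your description glosses over as ``a telescoping difference \ldots\ plus a residual'') contributes a further $\tfrac{\gamma}{2}\|\varepsilon_k\|^2$, so you would land at $2\gamma\|\varepsilon_k\|^2$ rather than $\tfrac{3\gamma}{2}\|\varepsilon_k\|^2$. This is only a constant-factor loss, not a gap in the argument; but since you flagged the bookkeeping as the main obstacle, note that the paper's combined-Young route is both simpler and recovers the exact coefficient. One minor point: the convexity inequality $f(x_k)-f(x)+\langle\varLambda_k,G(x_k)-G(x)\rangle\le\langle g_k,x_k-x\rangle$ is not part of this lemma; the paper defers it to Lemma~\ref{twosides}.
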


\[
\begin{array}{rcl}
 &  & \left\langle x_{k}-x,\,\nabla f(x_{k})+\left\langle \varLambda_{k},\,\nabla G\left(x_{k}\right)\right\rangle \right\rangle \\
 & \leq & \frac{1}{2\gamma}\left(\left\Vert x-x_{k}\right\Vert ^{2}-\left\Vert x-x_{k+1}\right\Vert ^{2}\right)+\frac{1}{2\gamma}\left(\left\Vert x-\tilde{x}_{k}\right\Vert ^{2}-\left\Vert x-\tilde{x}_{k+1}\right\Vert ^{2}\right)\\
 &  & +\gamma(L_{f}+\bar{\rho}(\theta)L_{g,\,\mathcal{X}})^{2}+\frac{3\gamma}{2}\left\Vert \varepsilon_{k}\right\Vert ^{2}+\sqrt{D_{\mathcal{X}}}\left\Vert \varepsilon_{k}\right\Vert .
\end{array}
\]

\begin{proof}
Recall that $x_{k+1}$ is a minimizer of Problem (\ref{inexactprimalupdate}).
From \cite[Section 7.4, Theorem 2]{luenberger1997optimization}, the
optimality of $x_{k+1}$ implies that for any $x\in\mathcal{X}$ we
must have

\[
\left\langle \gamma\left(\nabla f(x_{k})+\left\langle \varLambda_{k},\,\nabla G\left(x_{k}\right)\right\rangle +\varepsilon_{k}\right)+x_{k+1}-x_{k},\,x_{k+1}-x\right\rangle \leq0.
\]
This inequality indicates that for any $x\in\mathcal{X}$,

\begin{eqnarray}
 &  & \left\langle x_{k}-x,\,\nabla f(x_{k})+\left\langle \varLambda_{k},\,\nabla G\left(x_{k}\right)\right\rangle +\varepsilon_{k}\right\rangle \nonumber \\
 & \leq & \left\langle x_{k}-x_{k+1},\,\nabla f(x_{k})+\left\langle \varLambda_{k},\,\nabla G\left(x_{k}\right)\right\rangle +\varepsilon_{k}\right\rangle -\frac{1}{\gamma}\left\langle x_{k+1}-x_{k},\,x_{k+1}-x\right\rangle \nonumber \\
 & = & \left\langle x_{k}-x_{k+1},\,\nabla f(x_{k})+\left\langle \varLambda_{k},\,\nabla G\left(x_{k}\right)\right\rangle +\varepsilon_{k}\right\rangle \nonumber \\
 &  & +\frac{1}{2\gamma}\left(\left\Vert x-x_{k}\right\Vert ^{2}-\left\Vert x-x_{k+1}\right\Vert ^{2}-\left\Vert x_{k+1}-x_{k}\right\Vert ^{2}\right)\nonumber \\
 & \leq & \frac{1}{2\gamma}\left(\left\Vert x-x_{k}\right\Vert ^{2}-\left\Vert x-x_{k+1}\right\Vert ^{2}\right)+\frac{\gamma}{2}\left\Vert \nabla f(x_{k})+\left\langle \varLambda_{k},\,\nabla G\left(x_{k}\right)\right\rangle +\varepsilon_{k}\right\Vert ^{2}\nonumber \\
 & \leq & \frac{1}{2\gamma}\left(\left\Vert x-x_{k}\right\Vert ^{2}-\left\Vert x-x_{k+1}\right\Vert ^{2}\right)+\gamma(L_{f}+\bar{\rho}(\theta)L_{g,\,\mathcal{X}})^{2}+\gamma\left\Vert \varepsilon_{k}\right\Vert ^{2},\label{telescope}
\end{eqnarray}
where the equality follows by $\left\Vert x-x_{k}\right\Vert ^{2}-\left\Vert x-x_{k+1}\right\Vert ^{2}-\left\Vert x_{k+1}-x_{k}\right\Vert ^{2}=-2\left\langle x_{k+1}-x_{k},\,x_{k+1}-x\right\rangle $,
the second inequality holds by Young's inequality, and the last inequality
follows from Assumptions \textbf{A2}, \textbf{A4}, and

\[
\frac{1}{2}\left\Vert \nabla f(x_{k})+\left\langle \varLambda_{k},\,\nabla G\left(x_{k}\right)\right\rangle +\varepsilon_{k}\right\Vert ^{2}\leq\left\Vert \nabla f(x_{k})+\left\langle \varLambda_{k},\,\nabla G\left(x_{k}\right)\right\rangle \right\Vert ^{2}+\left\Vert \varepsilon_{k}\right\Vert ^{2}.
\]

From the definition of $\tilde{x}_{k+1}$ in (\ref{hatx}), and by
the same arguments used to derive (\ref{telescope}), we have that
for any $x\in\mathcal{X}$,

\begin{equation}
\begin{array}{rcl}
 &  & \left\langle \tilde{x}_{k}-x,\,-\varepsilon_{k}\right\rangle \leq\frac{1}{2\gamma}\left(\left\Vert x-\tilde{x}_{k}\right\Vert ^{2}-\left\Vert x-\tilde{x}_{k+1}\right\Vert ^{2}\right)+\frac{\gamma}{2}\left\Vert \varepsilon_{k}\right\Vert ^{2}.\end{array}\label{teledifference}
\end{equation}
Notice that

\[
\begin{array}{rcl}
 &  & \left\langle x_{k}-x,\,\nabla f(x_{k})+\left\langle \varLambda_{k},\,\nabla G\left(x_{k}\right)\right\rangle +\varepsilon_{k}\right\rangle +\left\langle \tilde{x}_{k}-x,\,-\varepsilon_{k}\right\rangle \\
 & = & \left\langle x_{k}-x,\,\nabla f(x_{k})+\left\langle \varLambda_{k},\,\nabla G\left(x_{k}\right)\right\rangle \right\rangle +\left\langle x_{k}-\tilde{x}_{k},\,\varepsilon_{k}\right\rangle ,
\end{array}
\]
and $\left|\left\langle x_{k}-\tilde{x}_{k},\,\varepsilon_{k}\right\rangle \right|\leq\sqrt{D_{\mathcal{X}}}\left\Vert \varepsilon_{k}\right\Vert $,
thus by summing (\ref{telescope}) and (\ref{teledifference}) we
arrive at the conclusion.
\end{proof}
We may now analyze the averaged primal and dual solutions $\overline{x}_{K}=\frac{1}{K}\sum_{k=0}^{K-1}x_{k}$
and $\overline{\varLambda}_{K}=\frac{1}{K}\sum_{k=0}^{K-1}\varLambda_{k}$
returned by our primal-dual algorithm by giving a bound on the difference
$L_{\kappa}(\overline{x}_{K},\,\varLambda)-L_{\kappa}(x,\,\overline{\varLambda}_{K})$
for any $x\in\mathcal{X}$ and $\varLambda\in\mathcal{M}^{cd}\left(\Xi\right)\cap\mathcal{V}$.
This result is the key part of the proof of our main result Theorem
\ref{PSMDthm}. 
\begin{lem}
\label{twosides} For any $x\in\mathcal{X}$ and $\varLambda\in\mathcal{M}^{cd}\left(\Xi\right)\cap\mathcal{V}$,
we have,

\begin{equation}
\begin{array}{rcl}
L_{\kappa}(\overline{x}_{K},\,\varLambda)-L_{\kappa}(x,\,\overline{\varLambda}_{K}) & \leq & \frac{1}{\gamma K}B(\varLambda,\,\varLambda_{u})+\frac{1}{\gamma K}D_{\mathcal{X}}+\frac{\gamma}{2}(\bar{\rho}(\theta)G_{\max}^{2}+2(L_{f}+\bar{\rho}(\theta)L_{g,\,\mathcal{X}})^{2})\\
 &  & +\frac{3}{2}\gamma\,\frac{1}{K}\sum_{k=0}^{K-1}\left\Vert \varepsilon_{k}\right\Vert ^{2}+\sqrt{D_{\mathcal{X}}}\,\frac{1}{K}\sum_{k=0}^{K-1}\left\Vert \varepsilon_{k}\right\Vert .
\end{array}\label{violation}
\end{equation}
\end{lem}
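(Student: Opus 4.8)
The plan is to run the standard convex--concave (Nemirovski-type) averaging argument: use Jensen's inequality to pass from the averaged iterates $\overline{x}_{K},\overline{\varLambda}_{K}$ to a per-iteration sum, and then bound each per-iteration term by combining the dual progress estimate of Lemma \ref{mu-mut} with the primal progress estimate of Lemma \ref{xt-x}.

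First I would record the variational structure of $L_{\kappa}$: for each fixed $\varLambda\in\mathcal{V}$ the map $x\mapsto L_{\kappa}(x,\,\varLambda)$ is convex on $\mathcal{X}$ (by Assumptions \textbf{A2} and \textbf{A4}, since $\left\langle \varLambda,\,G(x)\right\rangle =\intop_{\Xi}g(x,\,\xi)\,\varLambda(d\xi)$ is a nonnegative superposition of the convex maps $g(\cdot,\,\xi)$), while for each fixed $x$ the map $\varLambda\mapsto L_{\kappa}(x,\,\varLambda)$ is concave on $\mathcal{V}$ (by Theorem \ref{propertiesofB}(iv)). Applying Jensen's inequality in the first slot to $\overline{x}_{K}=\frac{1}{K}\sum_{k}x_{k}$ and in the second slot to $\overline{\varLambda}_{K}=\frac{1}{K}\sum_{k}\varLambda_{k}$ then yields
\[
L_{\kappa}(\overline{x}_{K},\,\varLambda)-L_{\kappa}(x,\,\overline{\varLambda}_{K})\le\frac{1}{K}\sum_{k=0}^{K-1}\bigl[L_{\kappa}(x_{k},\,\varLambda)-L_{\kappa}(x,\,\varLambda_{k})\bigr].
\]

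Next I would split each summand through $L_{\kappa}(x_{k},\,\varLambda_{k})$:
\[
L_{\kappa}(x_{k},\,\varLambda)-L_{\kappa}(x,\,\varLambda_{k})=\bigl[L_{\kappa}(x_{k},\,\varLambda)-L_{\kappa}(x_{k},\,\varLambda_{k})\bigr]+\bigl[L_{\kappa}(x_{k},\,\varLambda_{k})-L_{\kappa}(x,\,\varLambda_{k})\bigr].
\]
The first bracket is exactly $\left\langle \varLambda-\varLambda_{k},\,G(x_{k})\right\rangle -\kappa(B(\varLambda,\,\varLambda_{u})-B(\varLambda_{k},\,\varLambda_{u}))$, hence bounded by Lemma \ref{mu-mut}; this is legitimate because the closed-form update of Theorem \ref{dualupdatesolution} shows every iterate $\varLambda_{k}$ lies in $\mathcal{M}^{cd}\left(\Xi\right)\cap\mathcal{V}$, as does $\varLambda$ by hypothesis, so the Bregman/Gateaux identities underlying Lemma \ref{mu-mut} apply. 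The second bracket equals $f(x_{k})-f(x)+\left\langle \varLambda_{k},\,G(x_{k})-G(x)\right\rangle $, and the gradient inequality for the convex functions $f$ and $x\mapsto\left\langle \varLambda_{k},\,G(x)\right\rangle $ (the latter convex since $\varLambda_{k}\ge0$) bounds it by $\left\langle x_{k}-x,\,\nabla f(x_{k})+\left\langle \varLambda_{k},\,\nabla G\left(x_{k}\right)\right\rangle \right\rangle $, which Lemma \ref{xt-x} then bounds. Adding the two and summing over $k$, the terms $\frac{1}{\gamma}(B(\varLambda,\,\varLambda_{k})-B(\varLambda,\,\varLambda_{k+1}))$ telescope to at most $\frac{1}{\gamma}B(\varLambda,\,\varLambda_{u})$ (using $\varLambda_{0}=\varLambda_{u}$ and $B\ge0$ from Theorem \ref{propertiesofB}(ii)), the terms $\kappa(B(\varLambda_{k},\,\varLambda_{u})-B(\varLambda_{k+1},\,\varLambda_{u}))$ telescope to $\kappa(B(\varLambda_{u},\,\varLambda_{u})-B(\varLambda_{K},\,\varLambda_{u}))\le0$, and the two primal squared-distance telescopes collapse to at most $\frac{1}{\gamma}D_{\mathcal{X}}$ in total (using $\tilde{x}_{0}=x_{0}$). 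Dividing by $K$ and grouping $\frac{\gamma\bar{\rho}(\theta)}{2}G_{\max}^{2}+\gamma(L_{f}+\bar{\rho}(\theta)L_{g,\,\mathcal{X}})^{2}=\frac{\gamma}{2}(\bar{\rho}(\theta)G_{\max}^{2}+2(L_{f}+\bar{\rho}(\theta)L_{g,\,\mathcal{X}})^{2})$ reproduces (\ref{violation}) verbatim.

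The main obstacle is bookkeeping rather than genuine difficulty: one must track the $B(\cdot,\,\varLambda_{u})$ cross-terms generated by Lemma \ref{mu-mut} so that they telescope cleanly against the initialization $\varLambda_{0}=\varLambda_{u}$, and one must confirm that every dual iterate stays in $\mathcal{M}^{cd}\left(\Xi\right)$ so that the variational calculus behind Lemma \ref{mu-mut} is valid. The only truly analytic point is the subgradient inequality for $x\mapsto\left\langle \varLambda_{k},\,G(x)\right\rangle $, which requires exchanging the $\mathbb{R}^{m}$-valued gradient with the integral over $\Xi$; this is justified by the uniform differentiability and Lipschitz bound of Assumption \textbf{A4} together with $\left\Vert \varLambda_{k}\right\Vert _{TV}\le\bar{\rho}(\theta)$.
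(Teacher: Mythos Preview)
Your proposal is correct and follows essentially the same approach as the paper: the paper also decomposes $L_{\kappa}(x_{k},\,\varLambda)-L_{\kappa}(x,\,\varLambda_{k})$ into the same two pieces, applies the gradient inequality and Lemmas \ref{mu-mut} and \ref{xt-x}, telescopes exactly as you describe (using $\varLambda_{0}=\varLambda_{u}$, $\tilde{x}_{0}=x_{0}$, and nonnegativity of $B$), and then invokes Jensen's inequality. The only cosmetic difference is that the paper applies Jensen at the end rather than the beginning, and it writes the decomposition directly rather than routing through $L_{\kappa}(x_{k},\,\varLambda_{k})$.
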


\begin{proof}
From the convexity of $f$ and $G$, 

\[
\begin{array}{rcl}
 &  & L_{\kappa}(x_{k},\,\varLambda)-L_{\kappa}(x,\,\varLambda_{k})\\
 & = & f(x_{k})+\left\langle \varLambda,\,G(x_{k})\right\rangle -\kappa\,B\left(\varLambda,\,\varLambda_{u}\right)-f(x)-\left\langle \varLambda_{k},\,G(x)\right\rangle +\kappa\,B\left(\varLambda_{k},\,\varLambda_{u}\right)\\
 & = & f(x_{k})-f(x)+\left\langle \varLambda_{k},\,G(x_{k})-G(x)\right\rangle +\left\langle \varLambda-\varLambda_{k},\,G(x_{k})\right\rangle -\kappa\,(B\left(\varLambda,\,\varLambda_{u}\right)-B\left(\varLambda_{k},\,\varLambda_{u}\right))\\
 & \leq & \left\langle x_{k}-x,\,\nabla f(x_{k})+\left\langle \varLambda_{k},\,\nabla G\left(x_{k}\right)\right\rangle \right\rangle +\left\langle \varLambda-\varLambda_{k},\,G(x_{k})\right\rangle -\kappa\,(B\left(\varLambda,\,\varLambda_{u}\right)-B\left(\varLambda_{k},\,\varLambda_{u}\right)).
\end{array}
\]
By using Lemma \ref{mu-mut} and Lemma \ref{xt-x}, and adding these
inequalities together for all $k=0,1,\ldots,K-1$, we obtain

\[
\begin{array}{rcl}
 &  & \sum_{k=0}^{K-1}L_{\kappa}(x_{k},\,\varLambda)-L_{\kappa}(x,\,\varLambda_{k})\\
 & \leq & \kappa(B(\varLambda_{0},\,\varLambda_{u})-B(\varLambda_{k},\,\varLambda_{u}))+\frac{1}{\gamma}(B(\varLambda,\,\varLambda_{u})-B(\varLambda,\,\varLambda_{k}))+\frac{1}{2\gamma}(\left\Vert x-x_{0}\right\Vert ^{2}-\left\Vert x-x_{k}\right\Vert ^{2})\\
 &  & +\frac{1}{2\gamma}(\left\Vert x-\tilde{x}_{0}\right\Vert ^{2}-\left\Vert x-\tilde{x}_{K}\right\Vert ^{2})+\frac{\gamma K}{2}(\bar{\rho}(\theta)G_{\max}^{2}+2(L_{f}+\bar{\rho}(\theta)L_{g,\,\mathcal{X}})^{2})\\
 &  & +\frac{3}{2}\gamma\sum_{k=0}^{K-1}\left\Vert \varepsilon_{k}\right\Vert ^{2}+\sqrt{D_{\mathcal{X}}}\sum_{k=0}^{K-1}\left\Vert \varepsilon_{k}\right\Vert \\
 & \leq & \frac{1}{\gamma}B(\varLambda,\,\varLambda_{u})+\frac{1}{\gamma}\left\Vert x-x_{0}\right\Vert ^{2}+\frac{\gamma K}{2}(\bar{\rho}(\theta)G_{\max}^{2}+2(L_{f}+\bar{\rho}(\theta)L_{g,\,\mathcal{X}})^{2})+\frac{3}{2}\gamma\sum_{k=0}^{K-1}\left\Vert \varepsilon_{k}\right\Vert ^{2}+\sqrt{D_{\mathcal{X}}}\sum_{k=0}^{K-1}\left\Vert \varepsilon_{k}\right\Vert \\
 & \leq & \frac{1}{\gamma}B(\varLambda,\,\varLambda_{u})+\frac{1}{\gamma}D_{\mathcal{X}}+\frac{\gamma K}{2}(\bar{\rho}(\theta)G_{\max}^{2}+2(L_{f}+\bar{\rho}(\theta)L_{g,\,\mathcal{X}})^{2})+\frac{3}{2}\gamma\sum_{k=0}^{K-1}\left\Vert \varepsilon_{k}\right\Vert ^{2}+\sqrt{D_{\mathcal{X}}}\sum_{k=0}^{K-1}\left\Vert \varepsilon_{k}\right\Vert ,
\end{array}
\]
where the second inequality holds because the non-positive terms are
dropped and $\varLambda_{0}=\varLambda_{u}$, and the last inequality
follows from $\left\Vert x-x_{0}\right\Vert ^{2}\leq D_{\mathcal{X}}$. 

From the definitions of $\overline{x}_{K}$ and $\overline{\varLambda}_{K}$,
the convexity of $f$ and $G$, and the convexity of $B\left(\varLambda,\,\varLambda_{u}\right)$
in $\varLambda$, we have by Jensen's inequality

\[
\begin{array}{rcl}
L_{\kappa}(\overline{x}_{K},\,\varLambda)-L_{\kappa}(x,\,\overline{\varLambda}_{K}) & \leq & \frac{1}{K}[\sum_{k=0}^{K-1}L_{\kappa}(x_{k},\,\varLambda)-L_{\kappa}(x,\,\varLambda_{k})]\\
 & \leq & \frac{1}{\gamma K}B(\varLambda,\,\varLambda_{u})+\frac{1}{\gamma K}D_{\mathcal{X}}+\frac{\gamma}{2}(\bar{\rho}(\theta)G_{\max}^{2}+2(L_{f}+\bar{\rho}(\theta)L_{g,\,\mathcal{X}})^{2})\\
 &  & +\frac{3}{2}\gamma\,\frac{1}{K}\sum_{k=0}^{K-1}\left\Vert \varepsilon_{k}\right\Vert ^{2}+\sqrt{D_{\mathcal{X}}}\,\frac{1}{K}\sum_{k=0}^{K-1}\left\Vert \varepsilon_{k}\right\Vert .
\end{array}
\]
\end{proof}

\subsection{\label{subsec:Sample-Complexity-ofMC_integration}Sample Complexity
of Monte Carlo Integration}

Recall that $R(r)=(1+\beta)(\frac{1+r}{1-r})^{l}-1$ is defined in
Section \ref{sec:MC_integration}, we provide some properties of this
function which will be used in our analysis. 
\begin{lem}
\label{increasingproperty}For $l\in[\frac{1}{2},1)$, $\beta>0$
and $r\in(0,1)$, we have $R(r)>r$. Furthermore, for any $\varepsilon>0$
and positive integer $K$, there exists $\eta(\varepsilon,K)\in(0,\varepsilon)$,
such that as long as $\beta\leq\eta(\varepsilon,K)$, we have $R^{k}(\beta)\leq\varepsilon$
for all $k=1,2,\ldots,K-1$.
\end{lem}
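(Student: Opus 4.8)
The plan is to reduce this to an elementary inequality. Since $\beta>0$ and $1+r>0$, it is enough to prove $\left(\frac{1+r}{1-r}\right)^{l}\ge 1+r$, because then $R(r)=(1+\beta)\left(\frac{1+r}{1-r}\right)^{l}-1\ge(1+\beta)(1+r)-1=r+\beta(1+r)>r$. For $l=\tfrac12$, squaring the claimed inequality gives $\frac{1+r}{1-r}\ge(1+r)^{2}$, i.e. $\frac{1}{1-r}\ge 1+r$, i.e. $r^{2}\ge 0$, which holds on $[0,1)$. For general $l\in[\tfrac12,1)$, since $\frac{1+r}{1-r}\ge 1$ on $[0,1)$ and $t\mapsto t^{l}$ is nondecreasing, $\left(\frac{1+r}{1-r}\right)^{l}\ge\left(\frac{1+r}{1-r}\right)^{1/2}\ge 1+r$, which finishes this part.

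\textbf{Second statement (iterates stay below $\varepsilon$).} I would view $R$ as a function of two variables, $R(\beta,r)\triangleq(1+\beta)\left(\frac{1+r}{1-r}\right)^{l}-1$, jointly continuous on $[0,\infty)\times[0,1)$, and note $R(\beta,r)\ge(1+\beta)-1=\beta\ge0$ for $\beta,r\ge0$. Define $f_{0}(\beta)=\beta$ and $f_{k+1}(\beta)=R(\beta,f_{k}(\beta))$, so $f_{k}(\beta)=R^{k}(\beta)$ as long as every intermediate value lies in $[0,1)$. An immediate induction using $R(0,0)=0$ gives $f_{k}(0)=0$ for all $k$. Put $\varepsilon'=\min\{\varepsilon,\tfrac12\}$ and build a nested sequence $\delta_{0}\ge\delta_{1}\ge\cdots\ge\delta_{K-1}>0$ by finite induction on $k$: take $\delta_{0}=\varepsilon'$, and given $\delta_{k}>0$ with $f_{j}$ continuous on $[0,\delta_{k}]$ and $f_{j}([0,\delta_{k}])\subseteq[0,\varepsilon']$ for $j\le k$, observe that $f_{k+1}=R(\cdot,f_{k}(\cdot))$ is then continuous on $[0,\delta_{k}]$ (its second argument stays in $[0,\varepsilon']\subseteq[0,1)$) with $f_{k+1}(0)=0$, so continuity furnishes $\delta_{k+1}\in(0,\delta_{k}]$ with $f_{k+1}([0,\delta_{k+1}])\subseteq[0,\varepsilon']$. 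Finally set $\eta(\varepsilon,K)\triangleq\min\{\delta_{K-1},\tfrac{\varepsilon}{2}\}\in(0,\varepsilon)$; then for $0<\beta\le\eta(\varepsilon,K)$ every iterate is well defined and $R^{k}(\beta)=f_{k}(\beta)\le\varepsilon'\le\varepsilon$ for $k=1,\ldots,K-1$.

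\textbf{Main obstacle and a quantitative variant.} The only genuine subtlety is that $R$ is defined only for $r\in[0,1)$, so $R^{k}$ is \emph{a priori} ill-defined and one cannot simply pass to the limit $\beta\to0$; the bootstrapping that keeps each iterate inside $[0,\varepsilon']$ (hence inside the domain), together with shrinking the neighborhood over the finitely many steps $k=0,\ldots,K-1$, is exactly what resolves this. If an explicit $\eta(\varepsilon,K)$ is desired, I would instead use $\frac{1+r}{1-r}\le 1+\frac{2r}{1-\varepsilon}$ and $(1+x)^{l}\le 1+lx$ (for $x\ge0$, $l\in(0,1]$) to get, whenever $r,\beta\le\varepsilon$, the linear estimate $R(\beta,r)\le\bigl(1+\tfrac{2l\varepsilon}{1-\varepsilon}\bigr)\beta+\tfrac{2l}{1-\varepsilon}\,r$; unrolling this with $r_{0}=\beta$ over at most $K-1$ steps yields $R^{k}(\beta)\le C_{K}\,\beta$ for a geometric-sum constant $C_{K}$ depending only on $l,\varepsilon,K$, and then $\eta(\varepsilon,K)=\min\{\varepsilon/2,\varepsilon/C_{K}\}$ both keeps the bound below $\varepsilon$ and validates the inductive hypothesis $r_{k}\le\varepsilon$ that justifies the linear estimate.
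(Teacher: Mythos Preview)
Your proof is correct. Both parts take a somewhat different route from the paper.

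For $R(r)>r$, the paper rewrites the inequality as $F(r)<1+\beta$ with $F(r)=(1+r)^{1-l}(1-r)^{l}$ and shows $F$ is strictly decreasing on $(0,1)$ by computing $(\ln F)'(r)=\dfrac{1-2l-r}{1-r^{2}}<0$ for $l\ge\tfrac12$; hence $F(r)<F(0)=1<1+\beta$. Your reduction to the case $l=\tfrac12$ via monotonicity of $t\mapsto t^{l}$ on $[1,\infty)$ is calculus-free and yields the intermediate inequality $\bigl(\tfrac{1+r}{1-r}\bigr)^{l}\ge 1+r$, after which $\beta>0$ gives the strict conclusion. Both arguments are short; yours is slightly more elementary, while the paper's derivative computation makes transparent exactly where the hypothesis $l\ge\tfrac12$ enters.

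For the second statement, the paper argues in one sentence from continuity, monotonicity of $R$ in $r$, and $\lim_{\beta\downarrow 0}R(\beta)=0$. Your finite backward induction is the same idea made rigorous, and you are more careful about the domain issue (each iterate must remain in $[0,1)$ for $R$ to be defined), which you handle via $\varepsilon'=\min\{\varepsilon,\tfrac12\}$. The quantitative variant you sketch, producing an explicit $\eta(\varepsilon,K)$ from a linear recursion, is a genuine addition not in the paper; just remember to replace $\varepsilon$ by $\min\{\varepsilon,\tfrac12\}$ there as well so that the factor $1/(1-\varepsilon)$ is finite and the inductive hypothesis $r_k\le\varepsilon<1$ keeps the iterates in the domain.
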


\begin{proof}
Define a function $F(r)=(1+r)^{1-l}(1-r)^{l}$. It is sufficient to
show $F(r)<1+\beta$ for any $r\in(0,1)$. Noticing $F(0)=1<1+\beta$,
we only need to show $F(r)$ is decreasing in $r\in(0,1)$. In fact,
$\ln F(r)=(1-l)\ln(1+r)+l\ln(1-r)$, and $\frac{d\ln F(r)}{dr}=\frac{1-2l-r}{1-r^{2}}<0$. 

Since $R(r)$ is continuous and increasing in $r$, $\lim_{\beta\downarrow0}R(\beta)=0$,
and $R(r)>r$, there exists $\eta(\varepsilon,K)\in(0,\varepsilon)$,
such that as long as $\beta\leq\eta(\varepsilon,K)$, we have $R^{k}(\beta)\leq\varepsilon$
for all $k=1,2,\ldots,K-1$.
\end{proof}
The next lemma quantifies the error propagation from $\max_{1\leq i\leq N}\left|\frac{\hat{\lambda}_{k}(\xi_{i})}{\lambda_{k}(\xi_{i})}-1\right|$
to $\max_{1\leq i\leq N}\left|\frac{\hat{\lambda}_{k+1}(\xi_{i})}{\lambda_{k+1}(\xi_{i})}-1\right|$. 
\begin{lem}
\label{error_propagation_lemma}For $k\geq1$, suppose $\left|\frac{\intop_{\Xi}s_{k}(\xi)\lambda_{k}(\xi)^{l}/\text{vol}\left(\Xi\right)d\xi}{\sum_{i=1}^{N}s_{k}(\xi_{i})\lambda_{k}(\xi_{i})^{l}/N}-1\right|\leq\beta\in(0,1)$
and $\max_{i=1,\,2,\ldots,\,N}\left|\frac{\hat{\lambda}_{k}(\xi_{i})}{\lambda_{k}(\xi_{i})}-1\right|\leq r\in(0,1)$,
then $\max_{i=1,\,2,\ldots,\,N}\left|\frac{\hat{\lambda}_{k+1}(\xi_{i})}{\lambda_{k+1}(\xi_{i})}-1\right|\leq(1+\beta)\frac{(1+r)^{l}}{(1-r)^{l}}-1$. 
\end{lem}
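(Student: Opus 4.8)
The plan is to follow the per-sample ratio $\hat\lambda_{k+1}(\xi_i)/\lambda_{k+1}(\xi_i)$ and factor it into a ``normalization'' contribution coming from the projection onto $\{\|\varLambda\|_{TV}\le\rho\}$ and a ``pointwise'' contribution coming from the error already present in $\hat\lambda_k$. Writing $m(t)\triangleq\min\{\rho/t,\,1\}$ for $t>0$, $I\triangleq\intop_{\Xi}s_k(\xi)\lambda_k(\xi)^l\,d\xi$, and $\hat I\triangleq\frac{\mathrm{vol}(\Xi)}{N}\sum_{i=1}^{N}s_k(\xi_i)\hat\lambda_k(\xi_i)^l$, the two update rules become $\lambda_{k+1}(\xi_i)=m(I)\,s_k(\xi_i)\lambda_k(\xi_i)^l$ and $\hat\lambda_{k+1}(\xi_i)=m(\hat I)\,s_k(\xi_i)\hat\lambda_k(\xi_i)^l$; since $s_k>0$ and all iterates are strictly positive, dividing yields $\hat\lambda_{k+1}(\xi_i)/\lambda_{k+1}(\xi_i)=\bigl(m(\hat I)/m(I)\bigr)\bigl(\hat\lambda_k(\xi_i)/\lambda_k(\xi_i)\bigr)^{l}$ for every $i$. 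The pointwise factor is handled at once: the hypothesis $\bigl|\hat\lambda_k(\xi_i)/\lambda_k(\xi_i)-1\bigr|\le r$ with $l>0$ gives $\bigl(\hat\lambda_k(\xi_i)/\lambda_k(\xi_i)\bigr)^{l}\in[(1-r)^l,\,(1+r)^l]$.

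First I would estimate $\hat I/I$ by inserting the intermediate quantity $\tilde I\triangleq\frac{\mathrm{vol}(\Xi)}{N}\sum_{i=1}^{N}s_k(\xi_i)\lambda_k(\xi_i)^l$ and splitting $\hat I/I=(\hat I/\tilde I)(\tilde I/I)$. The ratio $\hat I/\tilde I$ equals $\bigl(\sum_i s_k(\xi_i)\hat\lambda_k(\xi_i)^l\bigr)\big/\bigl(\sum_i s_k(\xi_i)\lambda_k(\xi_i)^l\bigr)$, which lies in $[(1-r)^l,(1+r)^l]$ by the pointwise bound and positivity of $s_k$; the ratio $\tilde I/I$ is the reciprocal of the quantity controlled in the Monte-Carlo hypothesis, so it lies in $[1/(1+\beta),\,1/(1-\beta)]$. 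Hence $\hat I/I\in\bigl[(1-r)^l/(1+\beta),\,(1+r)^l/(1-\beta)\bigr]$.

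The core technical step is bounding the normalization factor $m(\hat I)/m(I)$, the only non-smooth ingredient. Using that $m$ is non-increasing together with the elementary inequalities $\min\{1/c,1\}\,m(t)\le m(ct)\le\max\{1/c,1\}\,m(t)$ for all $c>0$ (checked by branch analysis of the $\min$, or from $m(t)=\min\{\rho,t\}/t$), one obtains $m(\hat I)/m(I)\in\bigl[\min\{1,\,I/\hat I\},\,\max\{1,\,I/\hat I\}\bigr]$. Plugging in the range of $\hat I/I$ from the previous step and using $(1-\beta)/(1+r)^l<1<(1+\beta)/(1-r)^l$ gives $m(\hat I)/m(I)\in\bigl[(1-\beta)/(1+r)^l,\,(1+\beta)/(1-r)^l\bigr]$. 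Multiplying the two factors, I arrive at $(1-\beta)\bigl(\tfrac{1-r}{1+r}\bigr)^{l}\le\hat\lambda_{k+1}(\xi_i)/\lambda_{k+1}(\xi_i)\le(1+\beta)\bigl(\tfrac{1+r}{1-r}\bigr)^{l}$ for every $i$.

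The upper bound already gives $\hat\lambda_{k+1}(\xi_i)/\lambda_{k+1}(\xi_i)-1\le(1+\beta)(1+r)^l/(1-r)^l-1$, which is the claimed estimate; to conclude the two-sided bound it remains to verify that the lower-bound deficit $1-(1-\beta)\bigl(\tfrac{1-r}{1+r}\bigr)^{l}$ does not exceed it, i.e.\ that $(1-\beta)/u+(1+\beta)u\ge 2$ with $u\triangleq\bigl(\tfrac{1+r}{1-r}\bigr)^{l}\ge 1$. This holds since $g(u)\triangleq(1-\beta)/u+(1+\beta)u$ has $g'(u)=(1+\beta)-(1-\beta)/u^{2}>0$ on $[1,\infty)$, so $g(u)\ge g(1)=2$. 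Taking the maximum over $i=1,\dots,N$ completes the proof. I expect the bookkeeping around the non-smooth normalization factor $m(\hat I)/m(I)$ to be where most of the care is needed; everything else reduces to monotonicity and one-variable algebra.
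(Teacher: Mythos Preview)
Your proof is correct and follows essentially the same approach as the paper: both factor the ratio $\hat\lambda_{k+1}(\xi_i)/\lambda_{k+1}(\xi_i)$ into a normalization part $m(\hat I)/m(I)$ and a pointwise part $(\hat\lambda_k(\xi_i)/\lambda_k(\xi_i))^l$, control the normalization via the elementary inequality $\min\{b/a,1\}\le m(a)/m(b)\le\max\{b/a,1\}$, split $I/\hat I$ through the intermediate sample average $\tilde I$, and combine to obtain the two-sided bound. Your final verification that $(1-\beta)/u+(1+\beta)u\ge 2$ for $u\ge 1$ supplies a detail the paper simply asserts.
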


\begin{proof}
From $\frac{\hat{\lambda}_{k+1}(\xi_{i})}{\lambda_{k+1}(\xi_{i})}=\frac{\min\left\{ \rho/\left(\frac{\text{vol}\left(\Xi\right)}{N}\sum_{i=1}^{N}s_{k}(\xi_{i})\hat{\lambda}_{k}(\xi_{i})^{l}\right),1\right\} }{\min\left\{ \rho/\left(\intop_{\Xi}s_{k}(\xi)\lambda_{k}(\xi)^{l}d\xi\right),1\right\} }\frac{\hat{\lambda}_{k}(\xi_{i})^{l}}{\lambda_{k}(\xi_{i})^{l}}$,
we have

\begin{equation}
\min\left\{ \frac{\intop_{\Xi}s_{k}(\xi)\lambda_{k}(\xi)^{l}/\text{vol}\left(\Xi\right)d\xi}{\frac{1}{N}\sum_{i=1}^{N}s_{k}(\xi_{i})\hat{\lambda}_{k}(\xi_{i})^{l}},1\right\} \left(\frac{\hat{\lambda}_{k}(\xi_{i})}{\lambda_{k}(\xi_{i})}\right)^{l}\leq\frac{\hat{\lambda}_{k+1}(\xi_{i})}{\lambda_{k+1}(\xi_{i})}\leq\max\left\{ \frac{\intop_{\Xi}s_{k}(\xi)\lambda_{k}(\xi)^{l}/\text{vol}\left(\Xi\right)d\xi}{\frac{1}{N}\sum_{i=1}^{N}s_{k}(\xi_{i})\hat{\lambda}_{k}(\xi_{i})^{l}},1\right\} \left(\frac{\hat{\lambda}_{k}(\xi_{i})}{\lambda_{k}(\xi_{i})}\right)^{l},\label{k+1_errorinlambda_estimation}
\end{equation}
where we use that for any $a,b,\rho>0$, $\min\left\{ \frac{b}{a},1\right\} \leq\frac{\min\left(\rho/a,1\right)}{\min\left(\rho/b,1\right)}\leq\max\left\{ \frac{b}{a},1\right\} $.
From $\left|\frac{\intop_{\Xi}s_{k}(\xi)\lambda_{k}(\xi)^{l}/\text{vol}\left(\Xi\right)d\xi}{\sum_{i=1}^{N}s_{k}(\xi_{i})\lambda_{k}(\xi_{i})^{l}/N}-1\right|\leq\beta$,
we have 

\begin{equation}
1-\beta\leq\frac{\intop_{\Xi}s_{k}(\xi)\lambda_{k}(\xi)^{l}/\text{vol}\left(\Xi\right)d\xi}{\sum_{i=1}^{N}s_{k}(\xi_{i})\lambda_{k}(\xi_{i})^{l}/N}\leq1+\beta.\label{first_ratio}
\end{equation}
From $\max_{i=1,\,2,\ldots,\,N}\left|\frac{\hat{\lambda}_{k}(\xi_{i})}{\lambda_{k}(\xi_{i})}-1\right|\leq r$,
we have 

\begin{equation}
\frac{1}{(1+r)^{l}}\leq\frac{\sum_{i=1}^{N}s_{k}(\xi_{i})\lambda_{k}(\xi_{i})^{l}/N}{\sum_{i=1}^{N}s_{k}(\xi_{i})\hat{\lambda}_{k}(\xi_{i})^{l}/N}\leq\frac{1}{(1-r)^{l}},\quad i=1,\,2,\ldots,\,N.\label{second_ratio}
\end{equation}

Noticing $\frac{\intop_{\Xi}s_{k}(\xi)\lambda_{k}(\xi)^{l}/\text{vol}\left(\Xi\right)d\xi}{\frac{1}{N}\sum_{i=1}^{N}s_{k}(\xi_{i})\hat{\lambda}_{k}(\xi_{i})^{l}}=\frac{\intop_{\Xi}s_{k}(\xi)\lambda_{k}(\xi)^{l}/\text{vol}\left(\Xi\right)d\xi}{\sum_{i=1}^{N}s_{k}(\xi_{i})\lambda_{k}(\xi_{i})^{l}/N}\frac{\sum_{i=1}^{N}s_{k}(\xi_{i})\lambda_{k}(\xi_{i})^{l}/N}{\sum_{i=1}^{N}s_{k}(\xi_{i})\hat{\lambda}_{k}(\xi_{i})^{l}/N}$,
we have from (\ref{k+1_errorinlambda_estimation}), (\ref{first_ratio})
and (\ref{second_ratio}), 

\begin{equation}
(1-\beta)\frac{(1-r)^{l}}{(1+r)^{l}}\leq\frac{\hat{\lambda}_{k+1}(\xi_{i})}{\lambda_{k+1}(\xi_{i})}\leq(1+\beta)\frac{(1+r)^{l}}{(1-r)^{l}},\quad i=1,\,2,\ldots,\,N,\label{errorpropagate}
\end{equation}
which implies 

\[
\max_{i=1,\,2,\ldots,\,N}\left|\frac{\hat{\lambda}_{k+1}(\xi_{i})}{\lambda_{k+1}(\xi_{i})}-1\right|\leq\max\left\{ (1+\beta)\frac{(1+r)^{l}}{(1-r)^{l}}-1,1-(1-\beta)\frac{(1-r)^{l}}{(1+r)^{l}}\right\} =(1+\beta)\frac{(1+r)^{l}}{(1-r)^{l}}-1,
\]
following from $(1+\beta)\frac{(1+r)^{l}}{(1-r)^{l}}-1>1-(1-\beta)\frac{(1-r)^{l}}{(1+r)^{l}}$. 

The next result paves the way for the error bounds in Theorem \ref{errorbounds_MonteCarlo}
of our primal-dual algorithm based on Monte Carlo integration. 
\end{proof}
\begin{lem}
\label{sample_complexity_lemma} For any $\varepsilon>0$, $\delta\in(0,1)$,
$K\geq\frac{2(C(\epsilon,\,\theta)+D_{\mathcal{X}})\bar{\kappa}(\epsilon)^{2}}{\bar{\rho}(\theta)G_{\max}^{2}+2(L_{f}+\bar{\rho}(\theta)L_{g,\,\mathcal{X}})^{2}}$,
and sample size

\[
N\geq\max\left\{ 2\frac{L_{g,\mathcal{X}}^{2}M^{2}\rho_{0}^{2}}{\varepsilon^{2}}\ln(\frac{4K}{\delta}),\frac{1}{2\eta(\frac{\varepsilon}{2\rho_{0}L_{g,\mathcal{X}}M},K)^{2}}\ln(\frac{4K}{\delta})\right\} ,
\]
we have

\[
\left|\frac{1}{N}\sum_{i=1}^{N}\lambda_{k}(\xi_{i})\nabla_{x}g\left(x_{k},\,\xi_{i}\right)-\int_{\Xi}\lambda_{k}(\xi)\nabla_{x}g\left(x_{k},\,\xi\right)/\text{vol}\left(\Xi\right)d\xi\right|\leq\frac{\varepsilon}{2\rho_{0}},\quad k=0,1,\ldots,K-1,
\]

\[
\max_{i=1,\,2,\ldots,\,N}\left|\frac{\hat{\lambda}_{k}(\xi_{i})}{\lambda_{k}(\xi_{i})}-1\right|\leq\frac{\varepsilon}{2\rho_{0}L_{g,\mathcal{X}}M},\quad k=0,1,\ldots,K-1,
\]
with probability at least $1-\delta$. 
\end{lem}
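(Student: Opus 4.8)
The plan is to prove the two high-probability bounds separately by concentration and then to feed the second of them into the recursion of Lemma~\ref{error_propagation_lemma}. Throughout I would use that, uniformly in $k\in\{0,\dots,K-1\}$ and $\xi\in\Xi$, the quantities $s_k(\xi)=\exp\!\bigl(\gamma g(x_k,\xi)/(1+\gamma\kappa)\bigr)$ and $\lambda_k(\xi)$ are bounded above and below by positive constants: the upper bound $\lambda_k(\xi)\le M$ is Assumption~\ref{upperbound_M}, a matching lower bound $\lambda_k(\xi)\ge\underline{M}>0$ follows by the same telescoping argument applied to the one-sided inequality $\lambda_{k+1}(\xi)\ge c\,\lambda_k(\xi)^l$ (with $c>0$ coming from $s_k\ge\exp(-\gamma G_{\max}/(1+\gamma\kappa))$ and $\int_\Xi s_k\lambda_k^l\,d\xi\le\exp(\gamma G_{\max}/(1+\gamma\kappa))M^l\text{vol}(\Xi)$), and $\|\nabla_x g(x,\xi)\|\le L_{g,\mathcal{X}}$ by Assumption~\textbf{A4}.

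First I would control the Monte Carlo error of the integrand $\lambda_k(\xi)\nabla_x g(x_k,\xi)$. Since the $\xi_i$ are i.i.d.\ uniform on $\Xi$ and this integrand has norm at most $M L_{g,\mathcal{X}}$, a Hoeffding-type bound gives, for each fixed $k$, that
\[
\Bigl\|\frac{1}{N}\sum_{i=1}^N \lambda_k(\xi_i)\nabla_x g(x_k,\xi_i) - \int_\Xi \lambda_k(\xi)\nabla_x g(x_k,\xi)/\text{vol}(\Xi)\,d\xi\Bigr\|\le\frac{\varepsilon}{2\rho_0}
\]
with failure probability at most $\delta/(2K)$, provided $N$ is at least the first term in the stated maximum; a union bound over $k=0,\dots,K-1$ then gives the first claimed inequality with failure probability $\le\delta/2$. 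Applying the same concentration argument to the scalar $s_k(\xi)\lambda_k(\xi)^l$ (also bounded), and dividing by the sample average --- which the lower bounds above keep bounded away from $0$ --- shows that $\bigl|\frac{\int_\Xi s_k(\xi)\lambda_k(\xi)^l/\text{vol}(\Xi)\,d\xi}{\frac1N\sum_i s_k(\xi_i)\lambda_k(\xi_i)^l}-1\bigr|\le\eta\bigl(\frac{\varepsilon}{2\rho_0 L_{g,\mathcal{X}}M},K\bigr)$ for all $k$ with failure probability $\le\delta/2$, provided $N$ is at least the second term in the stated maximum (the $1/\sqrt{N}$ decay of the additive error, together with the constant lower bound on the denominator, is exactly what produces that form).

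On the event where both families of bounds hold --- of probability at least $1-\delta$ --- I would prove the second claimed inequality by induction on $k$. For $k=0$ it is immediate since $\hat{\lambda}_0\equiv\lambda_0\equiv 1$, so $r_0:=\max_i|\hat{\lambda}_0(\xi_i)/\lambda_0(\xi_i)-1|=0$. For the inductive step, Lemma~\ref{error_propagation_lemma} with $\beta=\eta\bigl(\varepsilon/(2\rho_0 L_{g,\mathcal{X}}M),K\bigr)$ and $r=r_k$ gives $r_{k+1}\le R(r_k)$, where $R(r)=(1+\beta)\bigl(\tfrac{1+r}{1-r}\bigr)^l-1$; since $R$ is increasing (Lemma~\ref{increasingproperty}), iterating from $r_0=0$ yields $r_k\le R^{k-1}(\beta)$, and because the hypothesis $K\ge\frac{2(C(\epsilon,\theta)+D_\mathcal{X})\bar{\kappa}(\epsilon)^2}{\bar{\rho}(\theta)G_{\max}^2+2(L_f+\bar{\rho}(\theta)L_{g,\mathcal{X}})^2}$ forces $\gamma\le 1/\bar{\kappa}(\epsilon)$ and hence $l=1/(1+\gamma\kappa)\in[\tfrac12,1)$, Lemma~\ref{increasingproperty} gives $R^{k-1}(\beta)\le\varepsilon/(2\rho_0 L_{g,\mathcal{X}}M)$ for $k=1,\dots,K-1$. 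Together with the $k=0$ case this is precisely the second displayed bound.

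The main obstacle --- and the reason the sample size involves the implicitly defined $\eta(\cdot,K)$ rather than an explicit polynomial --- is the accumulation of multiplicative error across the $K$ dual iterations: even a tiny per-step error could, in principle, be amplified geometrically by repeated application of $R$, so one must quantify how small $\beta$ must be for the iterates $R^0(\beta),\dots,R^{K-2}(\beta)$ to all stay below the target tolerance, which is exactly what Lemma~\ref{increasingproperty} supplies. A secondary technical point is that $\lambda_k$, $\hat{\lambda}_k$, and $x_k$ are all measurable functions of the single sample $(\xi_1,\dots,\xi_N)$, so the concentration steps must be applied along the realized trajectory, using only the deterministic, sample-independent bounds on $s_k$, $\lambda_k$, and $\nabla_x g$.
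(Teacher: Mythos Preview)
Your proposal follows essentially the same route as the paper: Hoeffding for the gradient integrand and for the normalization ratio, then the recursion of Lemma~\ref{error_propagation_lemma} controlled by Lemma~\ref{increasingproperty}, assembled by a union bound over $k=0,\dots,K-1$. The extra care you take---arguing a uniform positive lower bound on $\lambda_k$ so that the additive Hoeffding error becomes the required multiplicative bound on the ratio, and flagging that $x_k,\lambda_k$ are measurable in $(\xi_1,\dots,\xi_N)$---goes slightly beyond what the paper spells out, but the structure and the key lemmas invoked are identical.
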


\begin{proof}
First, noticing that $\left|\lambda_{k}(\xi)\nabla_{x}g\left(x_{k},\,\xi\right)\right|\leq L_{g,\mathcal{X}}M$
for all $0\leq k\leq K-1$ and $\xi\in\Xi$, and the number of samples
$N\geq2\frac{L_{g,\mathcal{X}}^{2}M^{2}\rho_{0}^{2}}{\varepsilon^{2}}\ln(\frac{4K}{\delta})$,
from Hoeffding's inequality, we have for each $k=0,1,\ldots,K-1$,

\begin{equation}
\left|\frac{1}{N}\sum_{i=1}^{N}\lambda_{k}(\xi_{i})\nabla_{x}g\left(x_{k},\,\xi_{i}\right)-\int_{\Xi}\lambda_{k}(\xi)\nabla_{x}g\left(x_{k},\,\xi\right)/\text{vol}\left(\Xi\right)d\xi\right|\leq\frac{\varepsilon}{2\rho_{0}}\label{hoeffdingforMC}
\end{equation}
with probability at least $1-\frac{\delta}{2K}$.

Next, since the number of samples satisfies $N\geq\frac{1}{2\eta(\frac{\varepsilon}{2\rho_{0}L_{g,\mathcal{X}}M},K)^{2}}\ln(\frac{4K}{\delta})$,
from Hoeffding's inequality, we have for each $k=0,1,\ldots,K-1$,

\begin{equation}
\left|\frac{\intop_{\Xi}s_{k}(\xi)\lambda_{k}(\xi)^{l}/\text{vol}\left(\Xi\right)d\xi}{\sum_{i=1}^{N}s_{k}(\xi_{i})\lambda_{k}(\xi_{i})^{l}/N}-1\right|\leq\eta(\frac{\varepsilon}{2\rho_{0}L_{g,\mathcal{X}}M},K)\label{Hoeffdingforratio}
\end{equation}
with probability at least $1-\frac{\delta}{2K}$. 

For any $i\in\left\{ 1,\,2,\ldots,\,N\right\} $, $\frac{\widetilde{\lambda_{1}}(\xi_{i})}{\lambda_{1}(\xi_{i})}=\frac{\min\left\{ \rho/\left(\frac{\text{vol}\left(\Xi\right)}{N}\sum_{i=1}^{N}s_{0}(\xi_{i})\right),1\right\} }{\min\left\{ \rho/\left(\intop_{\Xi}s_{0}(\xi)d\xi\right),1\right\} }$,
using $\min\left\{ \frac{b}{a},1\right\} \leq\frac{\min\left(\rho/a,1\right)}{\min\left(\rho/b,1\right)}\leq\max\left\{ \frac{b}{a},1\right\} $
for any $a,b,\rho>0$, we have

\[
\min\left\{ \frac{\intop_{\Xi}s_{0}(\xi)d\xi}{\frac{\text{vol}\left(\Xi\right)}{N}\sum_{i=1}^{N}s_{0}(\xi_{i})},1\right\} \leq\frac{\widetilde{\lambda_{1}}(\xi_{i})}{\lambda_{1}(\xi_{i})}\leq\max\left\{ \frac{\intop_{\Xi}s_{0}(\xi)d\xi}{\frac{\text{vol}\left(\Xi\right)}{N}\sum_{i=1}^{N}s_{0}(\xi_{i})},1\right\} ,
\]
and then from (\ref{Hoeffdingforratio}) with $k=0$, 

\begin{equation}
\max_{i=1,\,2,\ldots,\,N}\left|\frac{\widetilde{\lambda_{1}}(\xi_{i})}{\lambda_{1}(\xi_{i})}-1\right|\leq\left|\frac{\intop_{\Xi}s_{0}(\xi)/\text{vol}\left(\Xi\right)d\xi}{\sum_{i=1}^{N}s_{0}(\xi_{i})/N}-1\right|\leq\eta(\frac{\varepsilon}{2\rho_{0}L_{g,\mathcal{X}}M},K)\label{firsterrorin_lambda_estimation}
\end{equation}
with probability at least $1-\frac{\delta}{2K}$. 

From $K\geq\frac{2(C(\epsilon,\,\theta)+D_{\mathcal{X}})\bar{\kappa}(\epsilon)^{2}}{\bar{\rho}(\theta)G_{\max}^{2}+2(L_{f}+\bar{\rho}(\theta)L_{g,\,\mathcal{X}})^{2}}$,
we have $l\in[\frac{1}{2},1)$. Invoking Lemmas \ref{increasingproperty}
and \ref{error_propagation_lemma}, from (\ref{Hoeffdingforratio})
and (\ref{firsterrorin_lambda_estimation}) and taking the union bound,
we have 

\[
\max_{i=1,\,2,\ldots,\,N}\left|\frac{\hat{\lambda}_{k}(\xi_{i})}{\lambda_{k}(\xi_{i})}-1\right|\leq\frac{\varepsilon}{2\rho_{0}L_{g,\mathcal{X}}M},\quad k=1,2,\ldots,K-1,
\]
with probability at least $1-\delta/2$. Together with (\ref{hoeffdingforMC})
and taking union bound again, we arrive at the conclusion. 
\end{proof}

\section{\label{sec:Numerical-Experiments}Numerical Experiments}

This section presents a case adapted from \cite{mehrotra2014cutting}
to illustrate the methods developed in this paper:
\begin{alignat}{1}
\min\  & \left(x_{1}-2\right)^{2}+\left(x_{2}-0.2\right)^{2}\label{test_problem}\\
\text{s.t. } & \left(\frac{5\sin\left(\pi\sqrt{t}\right)}{1+t^{2}}\right)x_{1}^{2}-x_{2}\le0,\ \ \forall t\in\left[0,1\right],\nonumber \\
 & x_{1}\in\left[-1,1\right],x_{2}\in\left[0,0.2\right].\nonumber 
\end{alignat}

We solve the above problem by Algorithm \ref{alg:The-Randomized-Primal-Dual}
with Monte Carlo integration. The optimal solution of Problem (\ref{test_problem})
is $x=\left(0.20523677,0.2\right)$, with the optimal value being
$3.221$ \cite{mehrotra2014cutting}. Table \ref{tab:Results_rule1}
reports the numerical results by changing the number of iterations
from $K=500$ to $K=60000$. We see that the objective value, i.e.
$f\left(\bar{x}_{K}\right)$, reaches $3.201$ after $60000$ iterations,
which is close to the optimal value. In addition, the constraint violations
of the algorithm decrease with $K$. Figure \ref{fig:result_1}(a)
and (b) show the changes of $f\left(\bar{x}_{K}\right)$ and $g\left(\bar{x}_{K}\right)$
with the number of iterations $K$, respectively.

Sensitivity analysis is also implemented on the number of samples
for Monte Carlo integration. In Figure \ref{fig:sen_analysis}, we
plot the values corresponding to $x_{k}$, instead of $\bar{x}_{K}$,
to see how the objective values and constraint violations change in
each iteration. As can be seen, when the number of samples increases,
the convergence to the optimal value is faster. In addition, the constraint
violation improves as we increase the number of samples. When $N$
is increased from $100$ to $1000$, the improvement is not significant,
suggesting that $N=100$ is a sufficiently large sample size for Problem
(\ref{test_problem}).

We also study the behavior of the last iterate $x_{K}$. We see that
$f\left(x_{K}\right)$ converges faster to the optimal value in comparison
to $f\left(\bar{x}_{K}\right)$; when $N=1000$, $f\left(x_{K}\right)$
is close to the optimal value in around 7000 iterations, but $f\left(\bar{x}_{K}\right)$
is only $3.196$ after $50000$ steps (see Figure \ref{fig:result_1}
and \ref{fig:sen_analysis}).

\begin{table}
\caption{\label{tab:Results_rule1}Simulation results with $N=1000$ and $\epsilon=0.001$.}

\centering{}%
\begin{tabular}{|c|c|c|c|c|c|c|c|c|c|c|c|}
\hline 
\multirow{2}{*}{} & \multicolumn{10}{c|}{Number of iterations} & \multirow{2}{*}{Benchmark}\tabularnewline
\cline{2-11} \cline{3-11} \cline{4-11} \cline{5-11} \cline{6-11} \cline{7-11} \cline{8-11} \cline{9-11} \cline{10-11} \cline{11-11} 
 & $500$ & $1000$ & $3000$ & $5000$ & $10000$ & $20000$ & $30000$ & $40000$ & $50000$ & $60000$ & \tabularnewline
\hline 
\hline 
$f\left(\bar{x}_{K}\right)$ & $2.907$ & $2.997$ & $3.088$ & $3.119$ & $3.153$ & $3.176$ & $3.185$ & $3.192$ & $3.196$ & $3.201$ & $3.221$\tabularnewline
\hline 
$G\left(\bar{x}_{K}\right)$ & $0.235$ & $0.161$ & $0.095$ & $0.074$ & $0.052$ & $0.038$ & $0.032$ & $0.028$ & $0.025$ & $0.023$ & $\le0$\tabularnewline
\hline 
\end{tabular}
\end{table}

\begin{figure}
\begin{centering}
\includegraphics[viewport=15bp 0bp 425bp 320bp,clip,scale=0.57]{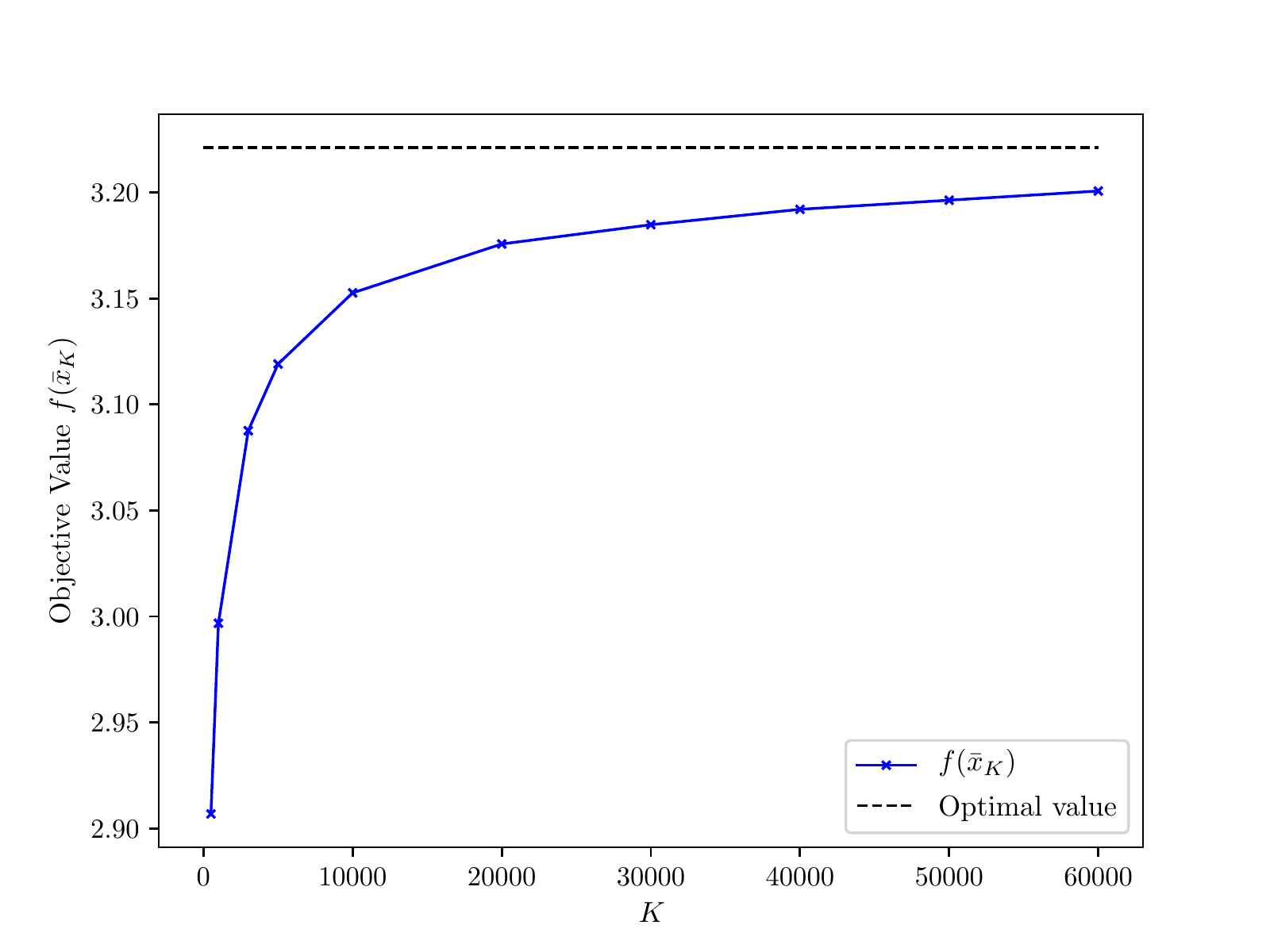}\includegraphics[viewport=15bp 0bp 425bp 320bp,clip,scale=0.57]{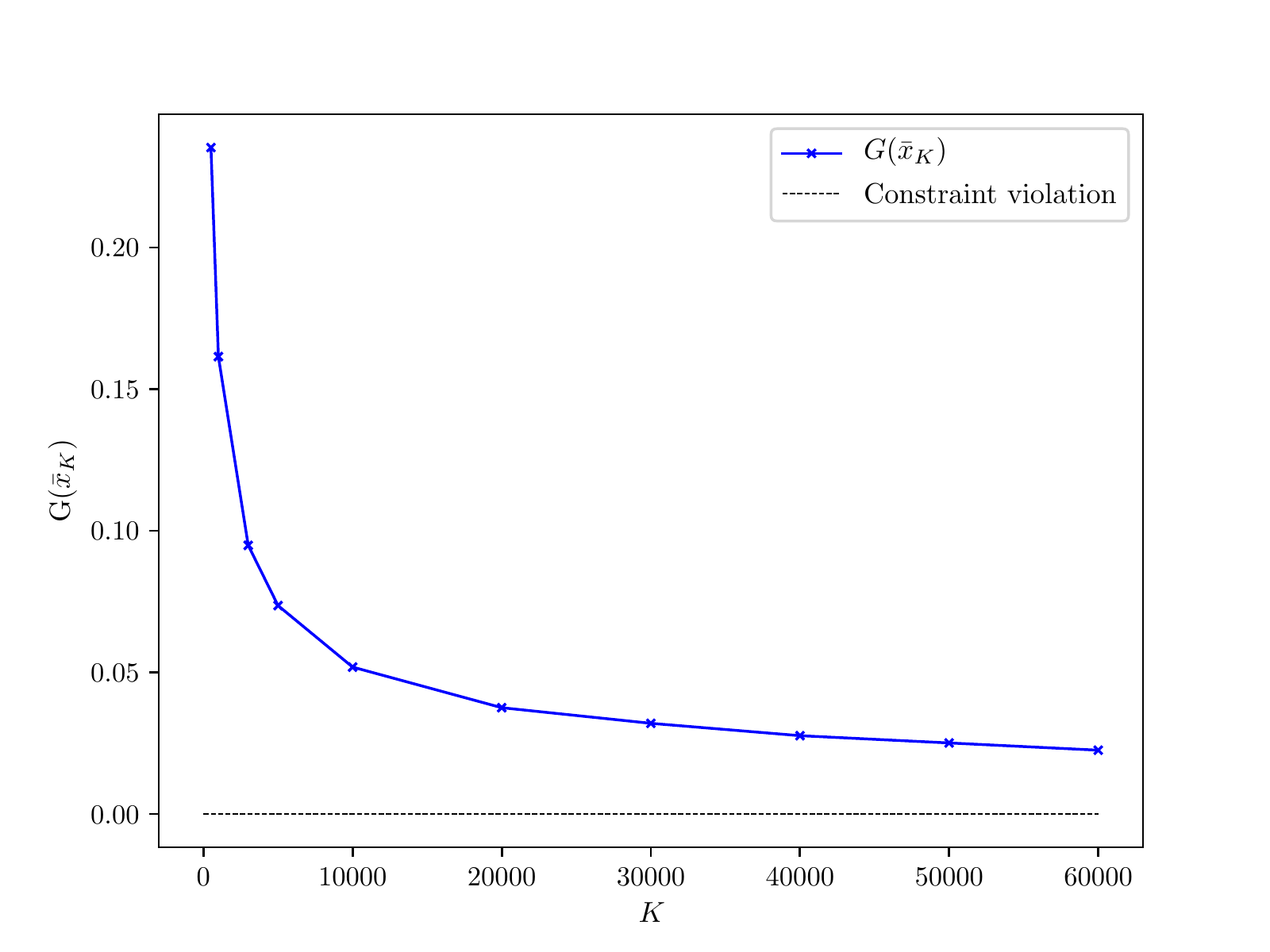}
\par\end{centering}
\caption{\label{fig:result_1}Sensitivity analysis on the number of iterations
$K$ results given $N=1000$: (a) objective values $f\left(\bar{x}_{K}\right)$
and (b) the constraint violations $G\left(\bar{x}_{K}\right)$.}
\end{figure}

\begin{figure}
\begin{centering}
\includegraphics[viewport=15bp 0bp 425bp 320bp,clip,scale=0.57]{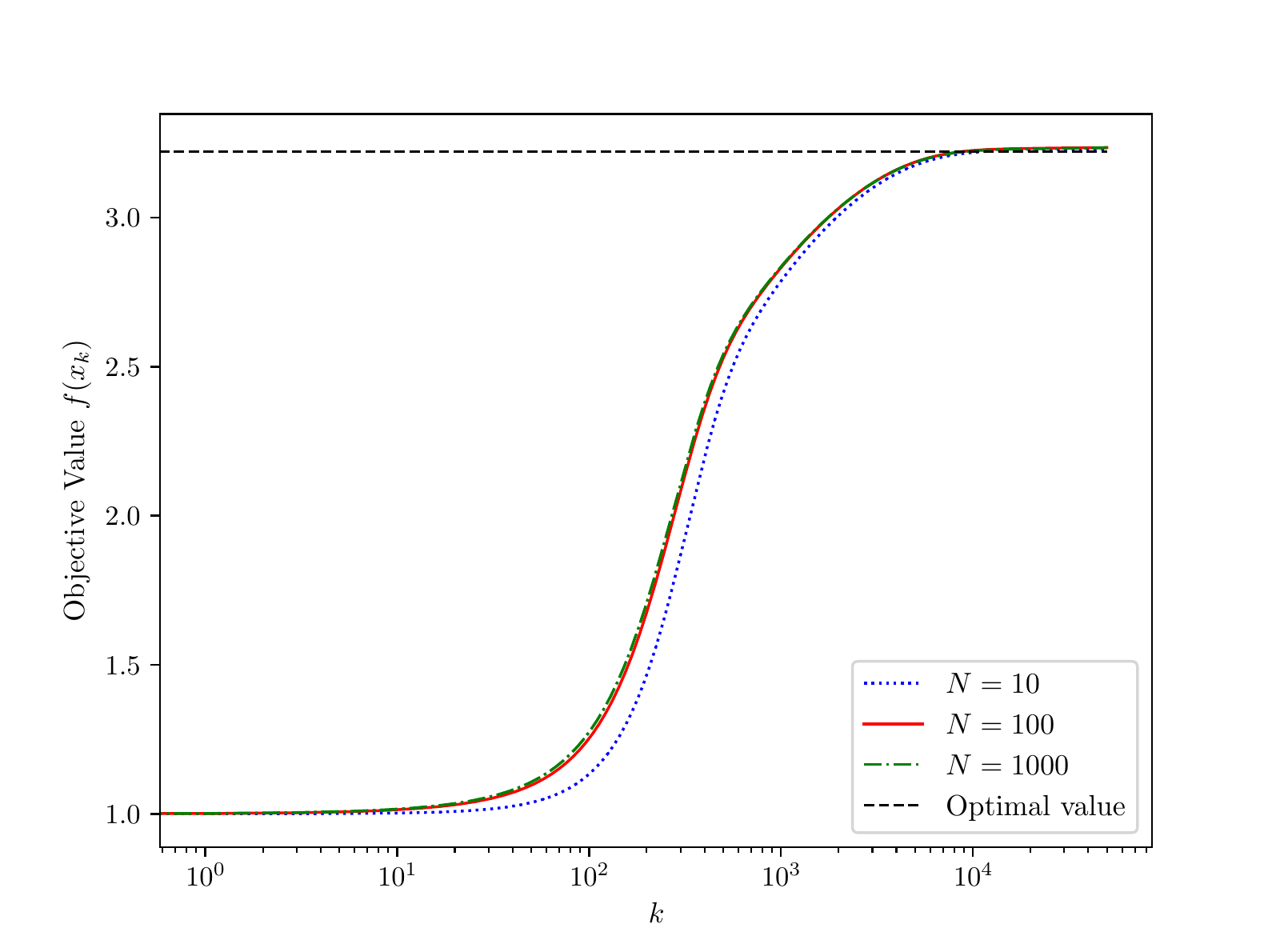}\includegraphics[viewport=15bp 0bp 425bp 320bp,clip,scale=0.57]{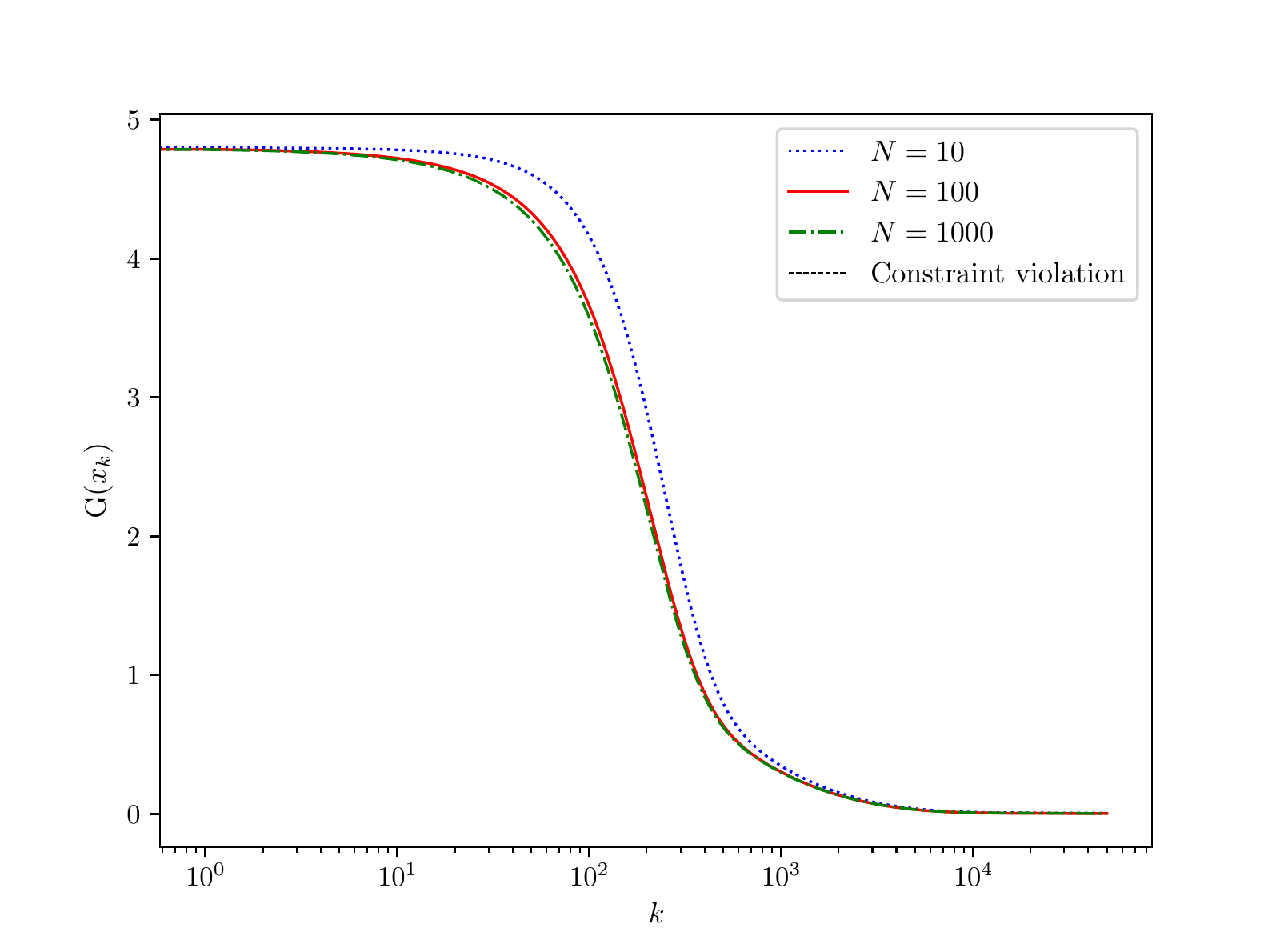}
\par\end{centering}
\caption{\label{fig:sen_analysis}Sensitivity analysis on the number of samples
by fixing $K=50000$.}
\end{figure}

\section{\label{sec:Conclusion}Conclusion}

In this paper, we develope an inexact primal-dual algorithm for SIP,
provide general error bounds, and analyze a specific primal-dual algorithm
based on Monte Carlo integration. At the core of our contribution,
we propose a new prox function for $\mathcal{M}_{+}\left(\Xi\right)$
which plays a central role in our algorithm and its implementation.

We briefly remark on the connection between our results and the most
closely related work. In \cite{mahdavi2012online}, a constrained
stochastic optimization problem is solved via a randomized primal-dual
algorithm which achieves an $\mathcal{O}(1/\sqrt{K})$ convergence
rate. This rate depends on the number of constraints, while our main
result is independent of the number of constraints. In \cite{lin2017revisiting},
a saddle-point reformulation and root finding approach is applied
to solve semi-infinite linear programming problems arising from Markov
decision processes (MDP). The resulting 'ALP-Secant' algorithm is
based on solving a sequence of saddle-point problems. In our work,
we solve a convex SIP via a single saddle-point problem. 

In future research, we will strengthen our results by considering
more sophisticated primal-dual algorithms with better convergence
rates and more advanced constraint sampling schemes. We will also
look beyond first-order methods to investigate inexact second-order
methods for SIP.

\bibliographystyle{plain}
\bibliography{References}

\end{document}